\numberwithin{equation}{section}
\theoremstyle{plain}
\newtheorem{thm}{Theorem}[section]
\newtheorem{prop}[thm]{Proposition}
\newtheorem{defi}[thm]{Definition}
\newtheorem{lem}[thm]{Lemma}
\newtheorem{cor}[thm]{Corollary}
\theoremstyle{remark}
\newtheorem{rema}[thm]{Remark}
\newcommand{\Z}{\mathbb{Z}}
\newcommand{\C}{\mathbb{C}}
\title[Boundary quantum KZ equations and Bethe vectors]
{Boundary quantum Knizhnik-Zamolodchikov equations
and Bethe vectors}
\author{Nicolai Reshetikhin, Jasper Stokman, Bart Vlaar}
\address{N.R.: Department of Mathematics, University of California, Berkeley,
CA 94720, USA \& KdV Institute for Mathematics, University of Amsterdam,
Science Park 904, 1098 XH Amsterdam, The Netherlands \& ITMO University, 197101, Kronverkskii ave. 49, Saint Petersburg, Russia.}
\email{reshetik@math.berkeley.edu}
\address{J.S.: KdV Institute for Mathematics, University of Amsterdam,
Science Park 904, 1098 XH Amsterdam, The Netherlands \& IMAPP,
Radboud University, Heyendaalseweg 135, 6525 AJ Nijmegen, The Netherlands.}
\email{j.v.stokman@uva.nl}
\address{B.V.: KdV Institute for Mathematics, University of Amsterdam,
Science Park 904, 1098 XH Amsterdam, The Netherlands.}
\email{b.h.m.vlaar@uva.nl}
\subjclass[2000]{}
\begin{document}
\keywords{}
\begin{abstract}
Solutions to boundary quantum Knizhnik-Zamolodchikov equations are constructed as bilateral sums involving ''off-shell'' Bethe vectors in case the reflection matrix is diagonal and only the $2$-dimensional representation of $\mathcal{U}_q(\widehat{\mathfrak{sl}}_2)$ is involved. 
We also consider their rational and classical degenerations.
\end{abstract}
\maketitle
\setcounter{tocdepth}{1}

\section{Introduction}

Let us start with a short historical outline of events which led to the boundary quantum Knizhnik-Zamolodchikov (qKZ) equations.

\subsection{Integrable systems with reflecting boundary conditions}

The integrability of classical and quantum field theories in $1+1$-dimensional space time is sensitive to boundary conditions.
A model which is integrable for periodic boundary conditions may not be integrable for Dirichlet or other boundary conditions. Integrability of classical and quantum field theories is intrinsically related to the Yang-Baxter equation and related algebraic structures.
Integrable boundary conditions have been studied in the late '70s and early '80s (see references in \cite{Sk}).
Corresponding algebraic structures were outlined by Cherednik \cite{CQKZ2} where he introduced the reflection equation and constructed some of its solutions, and in Sklyanin's work \cite{Sk} where he constructed a family of integrable spin chains when the solutions to the reflection equation from \cite{CQKZ2} (the so-called $K$-matrices) are diagonal.
In the same paper \cite{Sk} Sklyanin constructed eigenvectors and eigenvalues of the corresponding transfer matrix.

Another important relevant development was Cardy's work on boundary conformal field theories \cite{Car} where he described boundary conditions in conformal field theories which would retain conformal symmetry.

After these initial successes the study of integrable boundary conditions continued and took new heights with many important developments happening in the 90s.

In the paper \cite{JKKMW} vertex operators for quantum affine $sl_2$ were used to construct correlation functions.
One of the results was the observation that matrix elements of vertex operators with respect to boundary states satisfy a boundary qKZ equation from \cite{CQKZ}.
As a result solutions corresponding to the level one representation of quantum affine $sl_2$ were described explicitly.
For the latest developments in this direction see \cite{W}. See also \cite{Ko} for an overview of various results on boundary qKZ equations, vertex operators and construction of solutions to the boundary qKZ equation using the realization of vertex operators by a Heisenberg algebra (bosonization).

Integrable boundary conditions for quantum field theories in the context of factorized scattering were studied in \cite{GZ} and \cite{CDRS}.
One of the important results of this is the relation between integrable quantum field theories with reflecting boundary conditions and boundary conformal field theories.
In a similar way the integrability of line defects was studied in \cite{DMS}.
A comprehensive analysis of form factors in integrable
field theories with reflecting boundary conditions was done in \cite{BPT}.

Another interesting development was the relation between combinatorial problems such as alternating sign matrices and enumeration of plane partitions.
It appears that polynomial solutions to the boundary qKZ equation play an important role there \cite{DFZJ}.
In the same paper all relevant polynomial solutions to the boundary qKZ equation were constructed.

\subsection{qKZ equations and their solutions}

The Knizhnik-Zamolodchikov (KZ) equations were discovered in \cite{KZ} as a system of differential equations which defines conformal blocks in the Wess-Zumino-Witten conformal field theory.
A representation theoretical interpretation of KZ equations
as differential equations for intertwiners between certain representation of affine Kac-Moody algebras was given in \cite{TK}.

Examples of quantum KZ equations first appeared in works of Smirnov \cite{Sm} as fundamental equations for form-factors in integrable quantum field theories.
In \cite{FR} these equations were derived from representation theory of quantum affine algebras.

Solutions to KZ equations can be found by various methods.
The representation theoretical interpretation of KZ equations gives a natural construction of solutions as matrix elements of vertex operators. 
These matrix elements can be computed using various convenient realizations of highest weights representations of corresponding affine Lie algebras.
Wakimoto-type representations, also known as bosonic realizations, are particularly important.
These representations are infinite dimensional versions of the action of the Lie algebra on sections of a line bundle over the flag variety $G/B$, i.e. of the Borel-Weil-Bott constructions, see \cite{FB} for details.
Bosonic realizations give integral formulae for solutions to Knizhnik-Zamolodchikov equations.
For detailed analysis of these integral formulae and for references see \cite{SV}.
For details on how these integral formulae are related to representation theory
see \cite{EFK} and references therein.

These integral realizations involve ''off-shell'' Bethe vectors for Gaudin integrable systems \cite{B,RV}.
Similar construction of solutions to the qKZ equations involving Bethe vectors for the corresponding spin chains
and Jackson integrals was found in \cite{R} for quantum affine $sl_2$. It was immediately generalized to quantum affine $sl_n$ in \cite{TV}.
It is natural to expect that similar constructions exist for all simple Lie algebras and that they are particularly simple for classical Lie algebras for which Bethe vectors were constructed in \cite{R1,R2}.
However it seems that this has not been done yet. 
In this paper we construct solutions of the boundary qKZ equations using off-shell Bethe vectors \cite{Sk} for the corresponding spin chain with reflecting boundary conditions for diagonal boundary matrices and for two-dimensional representations of quantum $sl_2$. 
We discuss generalizations in the Conclusion.

\subsection{Cherednik's boundary qKZ equations}

Root system generalizations of the classical and quantum KZ equations were constructed by Cherednik \cite{ChUni,CQKZ}.
It depends on a so-called classical or quantum $R$-matrix datum, which can be associated to an arbitrary affine root system.
If the underlying affine root system is of type A then Cherednik's \cite{CQKZ} qKZ equations contain the Smirnov-Frenkel-Reshetikhin \cite{Sm,FR} qKZ equations as special cases. 
Boundary qKZ equations \cite[\S 5]{CQKZ} are Cherednik's qKZ equations associated to affine root systems of type B, C and D.

Representations of affine Hecke algebras produce quantum affine $R$-matrices \cite{CQKZ,ChAHA}. The double affine Hecke algebra plays an important role in this procedure. 
In case of principal series representations, the solutions of the associated qKZ equations are in one-to-one correspondence with suitable classes of common eigenfunctions of
Macdonald $q$-difference operators \cite{ChAHA,ChIND,StKZ}.
This correspondence explains the role of the Macdonald theory in the construction of polynomial solutions of qKZ equations in the context of the Razumov-Stroganov conjectures, cf., e.g., \cite{P,KT,Ka}.

Cherednik's theory relates to spin chains when the quantum affine $R$-matrix datum consists of linear operators on $W_1\otimes\cdots\otimes W_N$ for suitable vector spaces $W_i$ and the bulk $R$-matrices are given in terms of solutions $R^{ij}(x)\in\textup{End}(W_i\otimes W_j)$ ($i\not=j$) of the quantum Yang-Baxter equations
\begin{equation}\label{YBElarge}
R^{ij}(x) R^{ik}(x+y) R^{jk}(y) = R^{jk}(y)R^{ik}(x+y) R^{ij}(x)
\end{equation}
as linear operators on $W_i\otimes W_j\otimes W_k$.
The spin-$\frac{1}{2}$ XXZ chain corresponds to $W_i=\mathbb{C}^2$ with $R^{ij}(x)$ the $R$-matrix obtained from the two-dimensional evaluation representation of quantum $sl_2$.
This $R$-matrix also arises from a finite Hecke algebra (of type $A_{N-1}$) action on $\bigl(\mathbb{C}^2\bigr)^{\otimes N}$.

This action can be extended to an affine Hecke algebra action on $\bigl(\mathbb{C}^2\bigr)^{\otimes N}$ in various ways, thus leading to various extensions of the bulk $R$-matrices to a full quantum affine $R$-matrix datum.
For instance, it can be extended to an action of the affine Hecke algebra of type $C_N$, turning $\bigl(\mathbb{C}^2\bigr)^{\otimes N}$ into a principal series module.
In this case the associated quantum affine $R$-matrix datum is determined by the $R$-matrix and two {\it non-diagonal} solutions of the associated reflection equation ($K$-matrices). 
It provides a link between boundary qKZ equations associated to the spin-$\frac{1}{2}$ XXZ chain with non-diagonal reflecting boundary conditions and Koornwinder polynomials, see \cite{Ka,SV}.

In this paper we construct solutions of the boundary qKZ equations when the quantum $R$-matrix datum of affine type $C_N$ is constructed from the $R$-matrix of the spin-$\frac{1}{2}$ XXZ or XXX chain and the $K$-matrices are taken to be Cherednik's \cite{CQKZ2} diagonal solutions of the associated reflection equations.
As far as we know the corresponding quantum affine $R$-matrix datum does not arise from an action of the affine Hecke algebra of type $\textup{C}_N$ on $\bigl(\mathbb{C}^2\bigr)^{\otimes N}$.

\subsection{The boundary qKZ equations}

We formulate the boundary qKZ equations \cite[\S 5]{CQKZ} in case the quantum affine $R$-matrix datum arises from a single $R$-matrix $R(x)$ and two $K$-matrices $K^{\pm}(x)$.
We thus assume that $R(x)$ is a linear operator on $W\otimes W$ depending meromorphically on $x\in\mathbb{C}$ satisfying the quantum Yang-Baxter equation
\begin{equation}\label{YBE0}
R_{12}(x) R_{13}(x+y) R_{23}(y) = R_{23}(y)R_{13}(x+y) R_{12}(x)
\end{equation}
as linear operators on $W^{\otimes 3}$, where we use the well-known leg notation for linear operators acting on tensor product spaces.
The $K^{\pm}(x)$ are linear operators on $W$ depending meromorphically on $x\in\mathbb{C}$ and satisfying the reflection equations
\begin{equation}\label{Refl}
\begin{split}
R_{12}(x-y)K_1^{+}(x)R_{21}(x+y)K_2^{+}(y)&=K_2^{+}(y)R_{12}(x+y)K_1^{+}(x)R_{21}(x-y),\\
R_{21}(x-y)K_1^{-}(x)R_{12}(x+y)K_2^{-}(y)&=K_2^{-}(y)R_{21}(x+y)K_1^{-}(x)R_{12}(x-y)
\end{split}
\end{equation}
as linear operators on $W\otimes W$. We define for $r=1,\ldots,N$ transport matrices $A_r(\mathbf{t};\tau)$ as the linear operators
\begin{equation}\label{transportA}
\begin{split}
A_r(\mathbf{t};\tau):=&R_{r \, r\!+\!1}(t_r-t_{r+1}+\tau)\cdots R_{rN}(t_r-t_N+\tau)\\
\times &K^+_r(t_r+\tfrac{\tau}{2})
R_{Nr}(t_N+t_r)\cdots R_{r\!+\!1\, r}(t_{r+1}+t_r)\\
\times &R_{r\!-\!1 \,r}(t_{r-1}+t_r)\cdots
R_{1r}(t_1+t_r)K^-_r(t_r)\\
\times &R_{r1}(t_r-t_1)\cdots R_{r\,r\!-\!1}(t_r-t_{r-1})
\end{split}
\end{equation}
on $W^{\otimes N}$, depending meromorphically on $\mathbf{t}\in \mathbb{C}^{N}$.
The boundary qKZ equations are the following compatible system of difference equations
\begin{equation}\label{RqKZ}
f(\mathbf{t}+\tau \mathbf{e}_r)=A_r(\mathbf{t};\tau)f(\mathbf{t}),\qquad 1\leq r\leq N
\end{equation}
with step-size $\tau\in\C^\times$, where $f(\mathbf{t})$ is a
$W^{\otimes N}$-valued meromorphic function in
$\mathbf{t}=(t_1,\ldots,t_N)\in\C^N$ and $\mathbf{e}_r$ is the $r$-th standard basis vector in $\C^N$.

\subsection{The main result}
We found the following Jackson integral formula for solutions to the boundary qKZ equations \eqref{RqKZ} when $W=\mathbb{C}^2$, $R(x)$ is the $R$-matrix of the spin-$\frac{1}{2}$ XXZ or XXX chain, and the $K$-matrices $K^{\pm}(x)$ are taken to be Cherednik's \cite{CQKZ2} diagonal solutions $K^{\xi_{\pm}}(x)$ of the associated reflection equations \eqref{Refl}, which have an additional degree of freedom $\xi_{\pm}\in\mathbb{C}$ (see Section \ref{initialdata}).
This $R$-matrix $R(x)$ is $P$-symmetric, $R_{21}(x)=R(x)$, hence the two reflection equations \eqref{Refl} are the same.

Let $\overline{\mathcal{B}}^{\xi_-}(x;\mathbf{t})$ be the suitably rescaled and normalized upper-right matrix element of the boundary quantum monodromy operator (see section \ref{R-Mon}) and assume that $g_{\xi_+,\xi_-}(x)$, $h(x)$ and $F(x)$ are meromorphic functions in $x\in\C$ satisfying the functional equations
\begin{equation*}
\begin{split}
g_{\xi_+,\xi_-}(x+\tau)&=
\frac{b(\xi_--x-\tfrac{\eta}{2})b(\xi_+-x-\tfrac{\tau}{2}-
\tfrac{\eta}{2})}{b(\xi_-+x+\tau-\tfrac{\eta}{2})
b(\xi_++x+\tfrac{\tau}{2}-\tfrac{\eta}{2})}g_{\xi_+,\xi_-}(x),\\
h(x+\tau)&=\frac{b(x+\tau)b(x+\eta)}
{b(x)b(x+\tau-\eta)}h(x),\\
F(x+\tau)&=\frac{b(x+\tau-\tfrac{\eta}{2})}
{b(x+\tau+\tfrac{\eta}{2})}F(x),
\end{split}
\end{equation*}
where either $b(x)=\lambda\sinh(\nu x)$ (XXZ spin chain case) or $b(x)=\lambda x$ (XXX spin chain case) for some $\lambda,\nu\in\mathbb{C}^\times$.
Let $(\mathbf{e}_+,\mathbf{e}_-)$ be the standard basis of $\C^2$.
Fix generic $\mathbf{x}_0\in\C^M$ and suppose that the $\bigl(\mathbb{C}^2\bigr)^{\otimes N}$-valued sum
\begin{equation*}
\begin{split}
f_M(\mathbf{t}):=&\sum_{\mathbf{x}\in\mathbf{x}_0+\tau \Z^M}
\Bigl(\prod_{i=1}^M g_{\xi_+,\xi_-}(x_i)\Bigr)
\Bigl(\prod_{1\leq i<j\leq M}h(x_i+ x_j)h(x_i- x_j)\Bigr)\\
&\qquad\qquad\qquad\times\Bigl(\prod_{r=1}^N \prod_{i=1}^M F(t_r+ x_i)F(t_r- x_i)\Bigr)
\Bigl(\prod_{i=1}^M\overline{\mathcal{B}}^{\xi_-}(x_i;\mathbf{t})\Bigr) {\mathbf e}_+^{\otimes N}
\end{split}
\end{equation*}
converges mero-uniformly in $\mathbf{t}\in\C^N$.
Then $f_M$ is a meromorphic solution of the boundary qKZ equations \eqref{RqKZ}.

We will describe the functions $g_{\xi_+,\xi_-}(x)$, $h(x)$ and $F(x)$ explicitly (up to a doubly periodic function) in terms
of $q$-Gamma functions (XXZ spin chain case)
and Gamma functions (XXX spin chain case) respectively.
We will also give an explicit parameter domain for which the bilateral series mero-uniformly converge.

\subsection{The outline of the paper} In Section \ref{initialdata} we recall the definition of the XXX and XXZ spin-$\tfrac{1}{2}$ chain $R$-matrices, fix uniform notations and recall the definition of the diagonal $K$-matrices and the basic operations of the $R$- and $K$-matrices.
In the same section we recall the definition of monodromy operators and boundary monodromy operators and Sklyanin's \cite{Sk} construction of the Bethe vectors which produce the eigenvalues and eigenvectors of the boundary transfer operators. In Section \ref{qKZ-sol} we formulate the boundary qKZ equations and describe the main results in detail. We also discuss in Section \ref{qKZ-sol} the simple special case that the $R$-matrix is the identity operator, in which case the solutions are related to well-known bilateral sum evaluation formulas (Bailey's \cite{Ba}
${}_6\psi_6$ summation formula for the XXZ spin chain and Dougall's \cite{Do} ${}_5H_5$ summation formula for the XXX spin chain).
In Section \ref{Bsection} algebraic properties of the boundary monodromy operators, essential for the proof of the main result, are derived.
The proof of the main theorem is given in Section \ref{proofSection}.
In the conclusion we outline some open problems and the work in progress. In Appendix \ref{solKZ} we analyse boundary KZ equations, obtained as the formal $\tau\rightarrow 0$ limit of the boundary qKZ equations, and derive explicit integral solutions.

\subsection{Acknowledgements} 
We are grateful to J.-S. Caux,  P. Di Francesco, R. Kedem, B. Nienhuis, E. Opdam, M. Schlosser, F. Smirnov and P. Zinn-Justin for stimulating discussions. 
The work of N.R. was supported by the NSF grant DMS-0901431 and by the Chern-Simons research grant. 
He is grateful for the hospitality at QGM, Aarhus University, during December 2012 and January 2013 when a large part of this paper was completed.
The work of B.V. was supported by a NWO free competition grant. 
J.S. and B.V. thank the University of California for hospitality.

\section{$R$- and $K$-matrices and the boundary monodromy operator} \label{initialdata}
\subsection{$R$- and $K$-matrices}

Let $(\mathbf{e}_+,\mathbf{e}_-)$ be a fixed ordered basis of $\C^2$.
We represent linear operators on $\C^2$ by $2\times 2$-matrices with respect to $(\mathbf{e}_+,\mathbf{e}_-)$, and linear operators on $\C^2\otimes \C^2$ by $4\times 4$-matrices with respect to the ordered basis
$(\mathbf{e}_+\otimes \mathbf{e}_+,\mathbf{e}_+\otimes \mathbf{e}_-,\mathbf{e}_-\otimes \mathbf{e}_+,\mathbf{e}_-\otimes \mathbf{e}_-)$ of $\C^2\otimes\C^2$.
For a given nonzero meromorphic function $b$ in one variable set
\begin{equation}\label{R}
R(x;\eta):=
\frac{1}{b(x+\eta)} \begin{pmatrix}
b(x+\eta) & 0 & 0 & 0\\
0 & b(x) & b(\eta) & 0\\
0 & b(\eta) & b(x) & 0\\
0 & 0 & 0 & b(x+\eta) \end{pmatrix}.
\end{equation}
We view $R(x;\eta)$ as a linear operator on $\C^2\otimes\C^2$ depending meromorphically on the variables $x$ and $\eta$.
We use the standard tensor leg notation for linear operators on tensor product spaces.
\begin{lem}\label{Rmatrixlemma}
Let $b$ be a nonzero meromorphic function in one variable.
The following two statements are equivalent:
\begin{enumerate}
\item[{\bf a.}] $b$ satisfies the functional equation
\begin{equation}\label{fundamentalb} b(x)b(\eta)+b(x+y+\eta)b(y)=b(y+\eta)b(x+y)\end{equation}
as meromorphic functions in the variables $x,y,\eta$.
\item[{\bf b.}] $R(x;\eta)$ satisfies the quantum Yang-Baxter equation
\begin{equation}\label{YBE}
R_{12}(x;\eta)R_{13}(x+y;\eta)R_{23}(y;\eta)=
R_{23}(y;\eta)R_{13}(x+y;\eta)R_{12}(x;\eta)
\end{equation}
as linear operators on $\C^2\otimes\C^2\otimes\C^2$
depending meromorphically on $x,y,\eta$, with the tensor legs labelled by $1,2,3$ from left to right.
\end{enumerate}
\end{lem}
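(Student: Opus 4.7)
The plan is to reduce the Yang-Baxter equation \eqref{YBE} to the single scalar identity \eqref{fundamentalb} by exploiting the block structure of $R$. First I would rescale: set $\tilde R(x;\eta) := b(x+\eta)\,R(x;\eta)$, so that the matrix entries are the expressions $b(x+\eta), b(x), b(\eta)$ rather than rational ones. Both sides of \eqref{YBE} for $R$ differ from those for $\tilde R$ by the common nonzero scalar factor $b(x+\eta)\,b(x+y+\eta)\,b(y+\eta)$, so \eqref{YBE} is equivalent to its analogue for $\tilde R$.

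Second, I would observe that $\tilde R$ preserves the grading on $\mathbb{C}^2\otimes\mathbb{C}^2$ by the number of factors equal to $\mathbf{e}_+$ (the six-vertex ice rule, visible directly from the block form of \eqref{R}). Hence both sides of the YBE preserve the weight decomposition of $(\mathbb{C}^2)^{\otimes 3}$ into subspaces $V_k$ of dimensions $1, 3, 3, 1$ for $k = 3, 2, 1, 0$. On the one-dimensional sectors $V_3$ and $V_0$, each side acts by the scalar $b(x+\eta)\,b(x+y+\eta)\,b(y+\eta)$, so \eqref{YBE} holds trivially there. The involution $\mathbf{e}_+ \leftrightarrow \mathbf{e}_-$ commutes with $\tilde R$, identifying the YBE on $V_1$ with the one on $V_2$; thus all content of \eqref{YBE} is concentrated in a single $3\times 3$ block, say on $V_2$.

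Third, on $V_2 = \mathrm{span}(v_1, v_2, v_3)$ with $v_1 = \mathbf{e}_-\otimes\mathbf{e}_+\otimes\mathbf{e}_+$, $v_2 = \mathbf{e}_+\otimes\mathbf{e}_-\otimes\mathbf{e}_+$, $v_3 = \mathbf{e}_+\otimes\mathbf{e}_+\otimes\mathbf{e}_-$, I would compute the nine matrix entries of both sides by applying $\tilde R_{23}(y)$, then $\tilde R_{13}(x+y)$, then $\tilde R_{12}(x)$ (and similarly in the reverse order for the other side). The three diagonal entries produce tautologies. Matching the $v_2$-coefficient of either side acting on $v_1$ yields, after cancelling a common factor $b(\eta)$, exactly
\[ b(x)\,b(\eta) + b(x+y+\eta)\,b(y) = b(y+\eta)\,b(x+y), \]
which is \eqref{fundamentalb}; this already proves (b) $\Rightarrow$ (a). For the converse, each of the remaining off-diagonal identities is obtained from \eqref{fundamentalb} by a permutation of the arguments $x, y, \eta$ (for instance, matching $v_3$-coefficients on $v_1$ gives \eqref{fundamentalb} with $y$ and $\eta$ interchanged), and so follows automatically once \eqref{fundamentalb} is assumed as an identity in all three variables. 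The main work is therefore the bookkeeping of these nine entries rather than anything structural.
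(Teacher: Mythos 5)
Your argument is correct: the rescaling to $\tilde R$, the reduction via the ice rule and the $\mathbf{e}_+\leftrightarrow\mathbf{e}_-$ involution to a single $3\times 3$ block, and the identification of the off-diagonal entries with \eqref{fundamentalb} and its permutations in $x,y,\eta$ (e.g.\ the $v_3$-coefficient on $v_1$ gives \eqref{fundamentalb} with $y\leftrightarrow\eta$, the $v_2$-coefficient on $v_3$ gives it with $x\leftrightarrow y$) all check out, and the cancellation of $b(\eta)$ is legitimate since $b$ is a nonzero meromorphic function. The paper states Lemma \ref{Rmatrixlemma} without proof, treating it as a routine verification; your write-up is exactly the standard direct computation that fills this in.
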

We fix from now on a nonzero meromorphic function $b$ in one variable satisfying the functional equation \eqref{fundamentalb}.
Then either $b(x)=\lambda x$ for some $\lambda\in\mathbb{C}^\times$ or $b(x)=\lambda\sinh(\nu x)$  for some $\lambda,\nu\in\mathbb{C}^\times$ (cf., e.g., \cite{BFV}). 
The associated $R$-matrix $R(x;\eta)$ is, up to normalization, either the $R$-matrix associated to the XXX  spin-$\tfrac{1}{2}$ chain or the $R$-matrix associated to the XXZ spin-$\tfrac{1}{2}$ chain, respectively.

The solution $R(x;\eta)$ of the quantum Yang-Baxter equation has the following properties.
\begin{itemize}
\item Regularity: $R(0;\eta)=P$, where $P$ is the permutation operator on $\C^2 \otimes \C^2$
defined by $P(\mathbf v \otimes \mathbf w) = \mathbf w \otimes \mathbf v$ ($\mathbf v, \mathbf w \in \C^2$).
\item Unitarity: $R(x;\eta)R(-x;\eta)=\textup{id}$; this motivates our choice of normalization.
\item $P$-symmetry: $R_{21}(x;\eta):=PR_{12}(x;\eta)P =R_{12}(x;\eta)$.
\item $T$-symmetry: $R(x;\eta)^T=R(x;\eta)$.
\item Crossing symmetry:
\begin{equation} \label{Rcrossing}
R^{T_1}_{12}(-x;\eta)=
\frac{b(x)}{b(x-\eta)} \sigma^y_1 R_{12}(x-\eta;\eta) \sigma^y_1,
\end{equation}
where $\sigma^y = \begin{pmatrix} 0 & -\sqrt{-1} \\ \sqrt{-1} & 0 \end{pmatrix}$ and $T_r$ indicates the (partial) transpose in the tensor leg labelled by $r$.
\end{itemize}

For the given $R$-matrix \eqref{R},
a $K$-matrix is a $2\times 2$-matrix $K(x)$ depending meromorphically on
$x\in\mathbb{C}$ which satisfies the reflection equation
\begin{equation}\label{reaux}
\begin{gathered}
R_{12}(x-y;\eta) K_1(x) R_{12}(x+y;\eta) K_2(y) = \qquad \\
\qquad = K_2(y) R_{12}(x+y;\eta) K_1(x) R_{12}(x-y;\eta)
\end{gathered}
\end{equation}
as operator-valued meromorphic functions in the variables $x,y,\eta$.
In this paper we focus on diagonal nonsingular $K$-matrices. Upon
normalization we may and will restrict to $K(x)$ of the form
\begin{equation}\label{Kdiag}
K(x) = \begin{pmatrix} 1 & 0 \\ 0 & \alpha(x) \end{pmatrix}
\end{equation}
for a suitable nonzero meromorphic function $\alpha(x)$ in $x\in\mathbb{C}$.

It is easy to check that the $2\times 2$-matrix $K(x)$ satisfies the reflection equation \eqref{reaux} if and only if
\begin{equation}\label{coeffcond33}
b(x+y)(\alpha(x)-\alpha(y))=b(x-y)(\alpha(x)\alpha(y)-1)
\end{equation}
as meromorphic functions in $x$ and $y$.
Note that the condition \eqref{coeffcond33} is independent of $\eta$.
We readily obtain
\begin{lem}
Suppose $b$ is a nonzero meromorphic function satisfying \eqref{fundamentalb}.
Let $\alpha$ be a nonzero meromorphic function.
Then the following is a complete list of solutions of \eqref{coeffcond33}.
\begin{enumerate}
\item[{\bf a.}] $\alpha\equiv \pm 1$.
\item[{\bf b.}] $\alpha(x)=\frac{b(\xi-x)}{b(\xi+x)}$ for some $\xi\in\C$.
\item[{\bf c.}] $\alpha(x)=e^{\pm 2\nu x}$
if $b(x)=\lambda\sinh(\nu x)$ for some $\lambda,\mu\in\C^\times$.
\end{enumerate}
\end{lem}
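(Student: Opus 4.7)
The plan is to reduce the two-variable functional equation \eqref{coeffcond33} to an explicit one-parameter family of functions by specializing and then differentiating at $y=0$, after which the classification into (a)-(c) follows by a short case analysis on the admissible forms of $b$.

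First, setting $y=0$ in \eqref{fundamentalb} yields $b(x+\eta)b(0)=0$, so $b(0)=0$ (since $b\not\equiv 0$). Substituting $y=0$ into \eqref{coeffcond33} and cancelling the common factor $b(x)$ then gives $(1-\alpha(0))(\alpha(x)+1)=0$ for all $x$, which forces either $\alpha\equiv -1$ (case (a)) or $\alpha(0)=1$. Assuming the latter, since $\alpha$ is meromorphic with $\alpha(0)$ finite it is analytic near $0$, and I differentiate \eqref{coeffcond33} with respect to $y$ and set $y=0$. A short computation, using $b(0)=0$ and $\alpha(0)=1$, collapses the result to
\[
2b'(x)\bigl(\alpha(x)-1\bigr) \;=\; \alpha'(0)\,b(x)\bigl(\alpha(x)+1\bigr),
\]
and solving for $\alpha(x)$ with $c:=\tfrac12\alpha'(0)\in\C$ yields the explicit rigidity formula
\[
\alpha(x)=\frac{b'(x)+c\,b(x)}{b'(x)-c\,b(x)}.
\]
Thus every solution with $\alpha(0)=1$ is parametrized by a single $c\in\C$.

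Second, I split according to the two possible forms of $b$. For $b(x)=\lambda x$ one has $\alpha(x)=(1+cx)/(1-cx)$; this gives $\alpha\equiv 1$ for $c=0$ (case (a)), and, setting $\xi:=-1/c$ for $c\ne 0$, $\alpha(x)=(\xi-x)/(\xi+x)=b(\xi-x)/b(\xi+x)$ (case (b)). For $b(x)=\lambda\sinh(\nu x)$ the formula reads
\[
\alpha(x)=\frac{\nu\cosh(\nu x)+c\sinh(\nu x)}{\nu\cosh(\nu x)-c\sinh(\nu x)};
\]
the values $c=\pm\nu$ collapse numerator and denominator to $\nu e^{\pm\nu x}$ and $\nu e^{\mp\nu x}$, yielding $\alpha(x)=e^{\pm 2\nu x}$ (case (c)), while for any other $c$ the number $-c/\nu$ lies in $\C\setminus\{\pm 1\}$, which is the image of $\coth$, so writing $-c/\nu=\coth(\nu\xi)$ and applying the $\sinh$ addition formula recasts $\alpha$ as $\sinh(\nu(\xi-x))/\sinh(\nu(\xi+x))=b(\xi-x)/b(\xi+x)$ (case (b)). This exhausts the list.

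To close the classification, I verify the converse --- that every function in (a)-(c) actually solves \eqref{coeffcond33}. Case (a) is immediate, case (c) follows from the factorization $e^{2\nu x}-e^{2\nu y}=2e^{\nu(x+y)}\sinh(\nu(x-y))$, and case (b) reduces to the pair of addition identities $b(\xi-x)b(\xi+y)-b(\xi-y)b(\xi+x)=-b(2\xi)b(x-y)$ and $b(\xi-x)b(\xi-y)-b(\xi+x)b(\xi+y)=-b(2\xi)b(x+y)$, both valid for the two admissible $b$ by elementary addition formulas (or by iterating \eqref{fundamentalb}). The main non-routine step is the differentiation trick at $y=0$: it is what converts the two-variable constraint into a one-parameter rigidity formula, after which all remaining work is purely computational.
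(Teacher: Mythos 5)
Your proposal is correct, and in fact it supplies an argument where the paper offers none: the authors dismiss this lemma with ``We readily obtain'', so there is no proof in the text to compare against. Your route --- extract $b(0)=0$ from \eqref{fundamentalb}, specialize \eqref{coeffcond33} at $y=0$ to force $\alpha\equiv-1$ or $\alpha(0)=1$, then differentiate in $y$ at $y=0$ to get the rigidity formula $\alpha=(b'+cb)/(b'-cb)$, and finally check sufficiency --- is logically airtight, since the derivative condition is only used as a \emph{necessary} constraint and the converse inclusion is verified separately via the addition identities (which are indeed consequences of \eqref{fundamentalb}, cf.\ the identity \eqref{coeffcond4} the paper records for exactly this purpose). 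The one loose end is at the very first step: substituting $y=0$ and speaking of $\alpha(0)$, $\alpha'(0)$ tacitly assumes $\alpha$ has no pole at the origin. This is easily patched by the same expansion idea: if $\alpha$ had a pole of order $k\geq 1$ at $0$, comparing the coefficients of $y^{-k}$ on both sides of \eqref{coeffcond33} as $y\to 0$ gives $-b(x)=b(x)\alpha(x)$, i.e.\ $\alpha\equiv-1$, contradicting the pole; so $\alpha$ is automatically analytic at $0$ and your computation goes through. With that half-line added, the proof is complete; all the individual computations (the collapse of the $y$-derivative, the Gauss--M\"obius bookkeeping identifying $c=\pm\nu$ with case (c) and $\coth(\nu\xi)=-c/\nu$ with case (b), and the surjectivity of $\coth$ onto $\C\setminus\{\pm1\}$) check out.
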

We take $\alpha(x)=\frac{b(\xi-x)}{b(\xi+x)}$ in the sequel (the other solutions are special cases or limit cases), and we treat $\xi$ as an additional free parameter in the theory.
The corresponding $K$-matrix
\begin{equation*}
K^\xi(x):=
\begin{pmatrix} 1 & 0\\
0 & \frac{b(\xi-x)}{b(\xi+x)}\end{pmatrix}
\end{equation*}
goes back to  \cite{CQKZ2}.

Note that the identity \eqref{coeffcond33} capturing the fact that $K^\xi(x)$
satisfies the reflection equation \eqref{reaux} becomes
\begin{equation}\label{coeffcond3alt}
\sum_{\epsilon_1,\epsilon_2\in\{\pm\}}
b(\epsilon_1y-\epsilon_2x)b(\xi+\epsilon_2y)b(\xi+\epsilon_1x)=0.
\end{equation}
Since $b(x)$ is an odd function,  \eqref{coeffcond3alt} is equivalent to
\begin{equation}\label{coeffcond3}
\sum_{\epsilon_1,\epsilon_2\in\{\pm\}}\epsilon_1\epsilon_2
\frac{b(\xi+\epsilon_1x)b(\xi+\epsilon_2y)}
{b(\epsilon_1x +\epsilon_2y)}=0.
\end{equation}

The diagonal $K$-matrix $K^\xi(x)$ satisfies the {\it boundary crossing symmetry}
\begin{equation}\label{Kcrossing}
\textup{Tr}_1 \Bigl(K^\xi_1(x+\eta) P_{12} R_{12}(2x) \Bigr)=
\frac{b(\xi+x)b(2(x+\eta))}{b(\xi+x+\eta)b(2x+\eta)} K^\xi_2(x),
\end{equation}
where $\textup{Tr}_1$ is the partial trace
over the first tensor component of $\C^2\otimes\C^2$. In fact, \eqref{Kcrossing} follows
from the identity
\begin{equation}\label{coeffcond4}
b(\xi+x)b(x-z)+b(\xi-x)b(x+z)=b(\xi-z)b(2x),
\end{equation}
which is a direct consequence of the fundamental identity \eqref{fundamentalb}.
The boundary crossing symmetry \eqref{Kcrossing}
coincides with the boundary reflection crossing in \cite[(3.35)]{GZ}.
Note furthermore that the conditions \eqref{coeffcond3alt} and \eqref{coeffcond3} capturing the reflection equation are consequences of \eqref{coeffcond4}.

\subsection{Monodromy operators} \label{R-Mon}

Let $S_N$ be the symmetric group in $N$ letters.
Set $V:=\bigl(\C^2\bigr)^{\otimes N}$ and $\mathbf{t}:=(t_1,\ldots,t_N)$.
For $w \in S_N$ define the linear operator $T_w(x;\mathbf{t})$ on $\C^2\otimes V$ by
\begin{equation*}
T_w(x;\mathbf{t}):= R_{0,w(1)}(x-t_{w(1)})
\cdots R_{0,w(N)}(x-t_{w(N)})=
\begin{pmatrix} A_w(x;\mathbf{t}) & B_w(x;\mathbf{t})\\
C_w(x;\mathbf{t}) & D_w(x;\mathbf{t})
\end{pmatrix}
\end{equation*}
(from now on copies of the auxiliary space $\C^2$ will always be labelled by $0$'s).
The matrix coefficients are linear operators on $V$, obtained by
representing $T_w(x;\mathbf{t})$ as $2\times 2$-matrix
with respect to the ordered
basis $(\mathbf{e}_+,\mathbf{e}_-)$ of the auxiliary space $\C^2$.
Note that
\begin{equation}\label{eigenvalues}
A_w(x;\mathbf{t})\Omega=\Omega, \qquad D_w(x;\mathbf{t})\Omega= \Bigl(\prod_{r=1}^N\frac{b(x-t_r)}{b(x-t_r+\eta)}\Bigr)\Omega
\end{equation}
with the pseudo-vacuum vector
\begin{equation}\label{pseudovacuum}
\Omega:=\mathbf{e}_+^{\otimes N}\in V.
\end{equation}

The monodromy operator is
$T(x;\mathbf{t}):=T_e(x;\mathbf{t})$ with $e\in S_N$ the neutral element.
We will also use the notations
$A(x;\mathbf{t})=A_e(x;\mathbf{t}),\ldots,
D(x;\mathbf{t})=D_e(x;\mathbf{t})$ (the dependence on $N$ is implicitly
captured by the number of rapidities, $\mathbf{t}=(t_1,\ldots,t_N)$).
Note that for $N=1$,
\begin{equation}\label{B1}
\begin{split}
A(x;t)=\begin{pmatrix}
1 & 0\\ 0 & \frac{b(x-t)}{b(x-t+\eta)}\end{pmatrix},
\qquad &
B(x;t) = \begin{pmatrix} 0 & 0\\ \frac{b(\eta)}{b(x-t+\eta)} & 0 \end{pmatrix},\\
C(x;t) = \begin{pmatrix} 0 & \frac{b(\eta)}{b(x-t+\eta)}\\
0 & 0 \end{pmatrix},\qquad
& D(x;t) = \begin{pmatrix} \frac{b(x-t)}{b(x-t+\eta)} & 0\\ 0 & 1
\end{pmatrix}.
\end{split}
\end{equation}

The operators
$T_w(x;\mathbf{t})$ satisfy the fundamental commutation relations
\begin{equation}\label{RTT}
R_{00^\prime}(x-y)
T_{w,0}(x;\mathbf{t})T_{w,0^\prime}(y;\mathbf{t})=
T_{w,0^\prime}(y;\mathbf{t})T_{w,0}(x;\mathbf{t})R_{00^\prime}(x-y)
\end{equation}
as linear operators on $\C^2\otimes\C^2\otimes V$
(with the first and second tensor leg labelled by $0$ and $0^\prime$,
respectively). Here $T_{w,0}(x;\mathbf{t})$ is acting on the first and
third tensor leg and $T_{w,0^\prime}(x;\mathbf{t})$ on the second and third
tensor leg.

We set for $w\in S_N$,
\begin{equation*}
\mathcal{U}^\xi_{w}(x;\mathbf{t}):=
T_{w,0}(x;\mathbf{t})^{-1}K^\xi_0(x)^{-1}T_{w,0}(-x;\mathbf{t})
=\begin{pmatrix} \mathcal{A}^\xi_w(x;\mathbf{t}) &
\mathcal{B}^\xi_w(x;\mathbf{t})\\
\mathcal{C}^\xi_w(x;\mathbf{t}) & \mathcal{D}^\xi_w(x;\mathbf{t})
\end{pmatrix}
\end{equation*}
as linear operator on $\C^2\otimes V$. Then
$\mathcal{U}^\xi(x;\mathbf{t}):=\mathcal{U}^\xi_e(x;\mathbf{t})$
is the boundary monodromy operator \cite{Sk} associated to the $K$-matrix
$K^\xi$. The operators $\mathcal{U}^\xi_w(x;\mathbf{t})$
satisfy the fundamental commutation relations
\begin{equation}\label{RU}
\begin{split}
R_{00^\prime}(y-x)\mathcal{U}^\xi_{w,0}(x;\mathbf{t})&R_{00^\prime}(-x-y)
\mathcal{U}^\xi_{w,0^\prime}(y;\mathbf{t})=\\
=&\mathcal{U}^\xi_{w,0^\prime}(y;\mathbf{t})R_{00^\prime}(-x-y)
\mathcal{U}^\xi_{w,0}(x;\mathbf{t})R_{00^\prime}(y-x)
\end{split}
\end{equation}
as linear operators on $\C^2\otimes\C^2\otimes V$.

\section{The boundary quantum KZ equations and its solutions}\label{qKZ-sol}
\subsection{The boundary quantum KZ equations}

Recall from the introduction that Cherednik's \cite{CQKZ} boundary quantum KZ equations with step-size $\tau\in\mathbb{C}^\times$ associated to
our particular choice $R(x)=R(x;\eta)$, $K^{\pm}(x)=K^{\xi_{\pm}}(x)$ of initial data ($\xi_{\pm}\in\mathbb{C}$) are defined as
\begin{equation}\label{ReflqKZ}
f(\mathbf{t}+\tau \mathbf{e}_r)=
A_r(\mathbf{t};\xi_+,\xi_-;\tau)f(\mathbf{t}),\qquad r=1,\ldots,N
\end{equation}
for $V$-valued meromorphic functions $f(\mathbf{t})$ in
$\mathbf{t}\in\C^N$, with the transport operator $A_r(\mathbf{t};\xi_+,\xi_-;\tau)$ the linear operator
on $V=\bigl(\C^2\bigr)^{\otimes N}$
defined by
\begin{equation}\label{Atauj}
\begin{split}
A_r(\mathbf{t};\xi_+,\xi_-;\tau)&=
R_{r \, r\!+\!1}(t_r-t_{r+1}+\tau)\cdots R_{rN}(t_r-t_N+\tau)\\
&\times K^{\xi_+}_r(t_r+\tfrac{\tau}{2})
R_{Nr}(t_N+t_r)\cdots R_{r\!+\!1 \, r}(t_{r+1}+t_r)\\
&\times R_{r\!-\!1 \, r}(t_{r-1}+t_r)\cdots
R_{1r}(t_1+t_r)K^{\xi_-}_r(t_r)\\
&\times R_{r1}(t_r-t_1)\cdots R_{r\, r\!-\!1}(t_r-t_{r-1}).
\end{split}
\end{equation}
The compatibility of the difference equations \eqref{ReflqKZ} follows from the consistency conditions
\begin{equation}\label{consistencyA}
A_r(\mathbf{t}+\tau \mathbf{e}_s;\xi_+,\xi_-;\tau) A_s(\mathbf{t};\xi_+,\xi_-;\tau)=
A_s(\mathbf{t}+\tau \mathbf{e}_r;\xi_+,\xi_-;\tau)
A_r(\mathbf{t};\xi_+,\xi_-;\tau)
\end{equation}
of the transport operators ($r,s=1,\ldots,N$).
Note that the pseudo-vacuum vector $\Omega$ is a constant solution
of the boundary quantum KZ equations \eqref{ReflqKZ}.

\subsection{The formulation of the main result}

Set for $w\in S_N$,
\[ 
\overline{\mathcal{B}}^\xi_w(x;\mathbf{t}):= \Bigl(\prod_{r=1}^N\frac{b(x-t_r-\tfrac{\eta}{2})}{b(x-t_r+\tfrac{\eta}{2})} \Bigr) \frac{b(\xi-x-\tfrac{\eta}{2})b(2x)}{b(2x+\eta)}\mathcal{B}^\xi_w(x+\tfrac{\eta}{2};\mathbf{t}).
\]
By the commutation relations \eqref{RU} for the boundary monodromy operator we have
$\lbrack \mathcal{B}^\xi_w(x;\mathbf{t}),
\mathcal{B}^\xi_w(y;\mathbf{t})\rbrack=0$, hence also
\[
\lbrack \overline{\mathcal{B}}^\xi_w(x;\mathbf{t}),
\overline{\mathcal{B}}^\xi_w(y;\mathbf{t})\rbrack=0.
\]

Set for $M \in \Z_{\geq 1}$ and $\mathbf{x}=(x_1,\ldots,x_M)$,
\[
\overline{\mathcal{B}}^{\xi,(M)}_w(\mathbf{x};\mathbf{t}):=\prod_{i=1}^M
\overline{\mathcal{B}}^\xi_w(x_i;\mathbf{t})
\]
and write $\overline{\mathcal{B}}^\xi(x;\mathbf{t})=
\overline{\mathcal{B}}^\xi_e(x;\mathbf{t})$ and
$\overline{\mathcal{B}}^{\xi,(M)}(\mathbf{x};\mathbf{t})=
\overline{\mathcal{B}}^{\xi,(M)}_e(\mathbf{x};\mathbf{t})$.

This renormalization of $\mathcal{B}^\xi(x;\mathbf{t})$ is motivated by the symmetries of the repeated actions of $\mathcal{B}^\xi(\cdot;\mathbf{t})$ on the pseudo-vacuum vector $\Omega$, to be discussed in detail in Section \ref{Bsection}.

We use the following notion of mero-uniformly convergent sums (cf. \cite{Ru}).
We formulate it here for scalar-valued functions; the extension to $V$-valued functions is obvious.
\begin{defi}
Let $\mathcal{C}\subset\C^M$ be a discrete subset and $w(\mathbf{x};\mathbf{t})$ ($\mathbf{x}\in\mathcal{C}$) a weight function with values depending meromorphically on $\mathbf{t}\in\C^N$.
Suppose that for all $\mathbf{t}_0\in\C^N$, there exists an open neighbourhood $U_{\mathbf{t}_0}\subset\C^N$ of $\mathbf{t}_0$ and a nonzero holomorphic function $v_{\mathbf{t}_0}$ on $U_{\mathbf{t}_0}$ such that
\begin{enumerate}
\item $v_{\mathbf{t}_0}(\mathbf{t})w(\mathbf{x};\mathbf{t})$ is holomorphic in $\mathbf{t}\in U_{\mathbf{t}_0}$ for all $\mathbf{x}\in\mathcal{C}$,
\item the sum $\sum_{\mathbf{x}\in\mathcal{C}}v_{\mathbf{t}_0}(\mathbf{t}) w(\mathbf{x};\mathbf{t})$ is absolutely and uniformly convergent for $\mathbf{t}\in U_{\mathbf{t}_0}$.
\end{enumerate}
Then there exists a unique meromorphic function $f(\mathbf{t})$ in $\mathbf{t}\in\C^N$ satisfying
\[ v_{\mathbf{t}_0}(\mathbf{t})f(\mathbf{t})=\sum_{\mathbf{x}\in\mathcal{C}} v_{\mathbf{t}_0}(\mathbf{t})w(\mathbf{x};\mathbf{t}) \]
for $\mathbf{t}\in U_{\mathbf{t}_0}$ and $\mathbf{t}_0\in\C^N$.
We will write
\[ f(\mathbf{t})=\sum_{\mathbf{x}\in\mathcal{C}}w(\mathbf{x};\mathbf{t}) \]
and we will say that the sum converges mero-uniformly.
\end{defi}
The following main result of the paper is the analogue of \cite[Thm. 1.4]{R} to the setup of the boundary quantum KZ equations.
Given a meromorphic function $h:\C \to \C$ we use the shorthand notation
 $h(x\pm y):=h(x+y)h(x-y)$.
\begin{thm}\label{mr}
Let $\xi_+,\xi_-\in\C$ and let $g_{\xi_+,\xi_-}$, $h$
and $F$ be meromorphic functions in one variable satisfying
the functional equations
\begin{equation*}
\begin{split}
g_{\xi_+,\xi_-}(x+\tau)&=
\frac{b(\xi_--x-\tfrac{\eta}{2})b(\xi_+-x-\tfrac{\tau}{2}-
\tfrac{\eta}{2})}{b(\xi_-+x+\tau-\tfrac{\eta}{2})
b(\xi_++x+\tfrac{\tau}{2}-\tfrac{\eta}{2})}g_{\xi_+,\xi_-}(x),\\
h(x+\tau)&=\frac{b(x+\tau)b(x+\eta)}
{b(x)b(x+\tau-\eta)}h(x),\\
F(x+\tau)&=\frac{b(x+\tau-\tfrac{\eta}{2})}
{b(x+\tau+\tfrac{\eta}{2})}F(x).
\end{split}
\end{equation*}
Fix $\mathbf{x}_0\in\C^M$ and suppose that the $V$-valued sum
\begin{equation*}
\begin{split}
f_M(\mathbf{t}):=\sum_{\mathbf{x} \in \mathbf{x}_0 + \tau \Z^M}
\Bigl(\prod_{i=1}^M g_{\xi_+,\xi_-}(x_i)\Bigr)
&\Bigl(\prod_{1\leq i<j\leq M}h(x_i\pm x_j)\Bigr)\\
&\quad\times\Bigl(\prod_{r=1}^N\prod_{i=1}^MF(t_r\pm x_i)\Bigr)
\overline{\mathcal{B}}^{\xi_-,(M)}(\mathbf{x};\mathbf{t})\Omega
\end{split}
\end{equation*}
converges mero-uniformly in $\mathbf{t}\in\C^N$.
Then $f_M$ is a meromorphic solution of the boundary quantum KZ equations \eqref{ReflqKZ}, i.e.
\[
f_M(\mathbf{t}+\tau \mathbf{e}_r)=
A_r(\mathbf{t};\xi_+,\xi_-;\tau)f_M(\mathbf{t}),\qquad r=1,\ldots,N.
\]
\end{thm}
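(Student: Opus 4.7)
The strategy is to adapt Reshetikhin's argument \cite{R} for the bulk qKZ equations to the reflection-algebra setting of Sklyanin. Abbreviating the scalar weight in the definition of $f_M$ as
\[
W(\mathbf{x};\mathbf{t}):=\Bigl(\prod_{i=1}^M g_{\xi_+,\xi_-}(x_i)\Bigr)\Bigl(\prod_{1\leq i<j\leq M}h(x_i\pm x_j)\Bigr)\Bigl(\prod_{r=1}^N\prod_{i=1}^M F(t_r\pm x_i)\Bigr),
\]
substituting $f_M$ into \eqref{ReflqKZ} and exchanging sum and operator reduces the theorem to a term-by-term identity that holds modulo contributions cancelling under the $\tau\Z^M$-symmetry of the summation lattice.

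The central computational ingredient is an exchange identity for $A_r(\mathbf{t};\xi_+,\xi_-;\tau)$ acting on $\overline{\mathcal{B}}^{\xi_-,(M)}(\mathbf{x};\mathbf{t})\Omega$. Using the reflection commutation relations \eqref{RU} to move $A_r$ past the product of $\overline{\mathcal{B}}^{\xi_-}(x_i;\mathbf{t})$ and then invoking the pseudo-vacuum identities \eqref{eigenvalues}, one obtains a ``wanted'' contribution of the form $\Lambda_r(\mathbf{x};\mathbf{t})\,\overline{\mathcal{B}}^{\xi_-,(M)}(\mathbf{x};\mathbf{t}+\tau\mathbf{e}_r)\Omega$ plus a sum of ``unwanted'' terms in which some spectral parameter $x_i$ is replaced by a pole of the exchange, of the form $\pm(t_r+\tfrac{\tau}{2})\pm\tfrac{\eta}{2}$. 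The commutativity $[\overline{\mathcal{B}}^{\xi_-}(x;\mathbf{t}),\overline{\mathcal{B}}^{\xi_-}(y;\mathbf{t})]=0$ together with the sign invariance of $\overline{\mathcal{B}}^{\xi_-}$ under $x\mapsto -x$ inherent in its renormalization, both to be established in Section \ref{Bsection}, reduces the number of effectively distinct unwanted terms.

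The wanted terms are matched scalar-by-scalar by the $F$-functional equation, since $W$ depends on $\mathbf{t}$ only through the factors $F(t_r\pm x_i)$; the match requires that $\Lambda_r(\mathbf{x};\mathbf{t})$ factorize as a product of simple ratios of $b(\cdot)$ over $i=1,\ldots,M$, and this is ensured by the boundary crossing symmetry \eqref{Kcrossing} combined with the $R$-crossing \eqref{Rcrossing} and unitarity. The equations for $g_{\xi_+,\xi_-}$ and $h$ enter not here but in the cancellation of the unwanted terms: under the shift $x_i\mapsto x_i+\tau$ in the summation, the ratio of consecutive weights $W$ is exactly calibrated---using \eqref{coeffcond3} to handle the interaction with the $h(x_i\pm x_j)$-factors---to pair each unwanted term with its shifted counterpart of opposite sign, producing a telescoping cancellation along the bilateral lattice. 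Mero-uniform convergence ensures that no boundary residue remains.

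The main obstacle is the combinatorial intricacy of the exchange identity in the boundary setting: the transport operator \eqref{Atauj} contains a full ``$R$-round-trip'' together with two $K$-matrices, so commuting $A_r$ past $\overline{\mathcal{B}}^{\xi_-,(M)}$ produces substantially more off-diagonal contributions than in the bulk case, and one must verify that after using the sign/permutation symmetries of $\overline{\mathcal{B}}^{\xi_-}$ these reorganize themselves into exactly the pairs for which the $g_{\xi_+,\xi_-}$- and $h$-functional equations force cancellation. Section \ref{Bsection} will develop the algebraic identities making this bookkeeping tractable, and Section \ref{proofSection} will carry out the explicit computation.
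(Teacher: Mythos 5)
Your high-level intuitions about which functional equation does what are partly on target: the $F$-equation does indeed match the ``diagonal'' contribution term by term, and the $g$- and $h$-equations enter through a shift of a summation variable combined with the boundary crossing identity \eqref{coeffcond4}. But the central mechanism you propose --- an exchange identity obtained by moving $A_r$ past $\prod_i\overline{\mathcal{B}}^{\xi_-}(x_i;\mathbf{t})$ using \eqref{RU}, producing a wanted term $\Lambda_r\overline{\mathcal{B}}^{\xi_-,(M)}(\mathbf{x};\mathbf{t}+\tau\mathbf{e}_r)\Omega$ plus unwanted terms with some $x_i$ evaluated at $\pm(t_r+\tfrac{\tau}{2})\pm\tfrac{\eta}{2}$ --- does not exist in the form you describe. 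The relation \eqref{RU} is the reflection-algebra relation between entries of $\mathcal{U}^{\xi_-}(x;\mathbf{t})$ and $\mathcal{U}^{\xi_-}(y;\mathbf{t})$ at the \emph{same} inhomogeneities $\mathbf{t}$, acting in the auxiliary spaces; it cannot shift $\mathbf{t}$ to $\mathbf{t}+\tau\mathbf{e}_r$, and $A_r$ (built from $R$-matrices acting on the quantum spaces with arguments $t_s-t_r-\tau$ and $t_s+t_r$) is not a polynomial in the entries of $\mathcal{U}^{\xi_-}$, so no Sklyanin-type wanted/unwanted expansion applies. What actually makes the computation go is a different decomposition: the equation is rewritten as $K^{\xi_+}_r(t_r+\tfrac{\tau}{2})F_r(\mathbf{t};\xi_-)f(\mathbf{t})=G_r(\mathbf{t})f(\mathbf{t}+\tau\mathbf{e}_r)$, and the Yang--Baxter and reflection equations give \emph{exact} intertwining relations $F_r(\mathbf{t};\xi_-)\,\mathcal{U}^{\xi_-}(x;\mathbf{t})=\mathcal{U}^{\xi_-}_{s_r\cdots s_{N-1}}(x;e_r\mathbf{t})F_r(\mathbf{t};\xi_-)$ and $G_r(\mathbf{t})\,\mathcal{U}^{\xi_-}(x;\mathbf{t}+\tau\mathbf{e}_r)=\mathcal{U}^{\xi_-}_{s_r\cdots s_{N-1}}(x;\mathbf{t}+\tau\mathbf{e}_r)G_r(\mathbf{t})$, with no unwanted terms at all. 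The permutation $s_r\cdots s_{N-1}$ is chosen so that site $r$ becomes the last factor of the monodromy, which permits an explicit expansion of $\overline{\mathcal{B}}^{\xi_-,(M)}_{w}(\mathbf{x};\cdot)\Omega$ (over signs $\bm\epsilon\in\{\pm\}^M$ and subsets $J$) isolating all $t_r$-dependence in local operators $B_r$; since $B_r(x;t)B_r(y;t')=0$ in spin $\tfrac12$, only the levels $\#J=M$ and $\#J=M-1$ survive.

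Relatedly, the cancellation you describe for the non-diagonal part is not a telescoping of oppositely signed pairs along the lattice. After the expansion, the $\#J=M-1$ contributions on the two sides are each genuinely nonzero bilateral sums $I_{j,r}$ and $J_{j,r}$, and one proves $I_{j,r}=J_{j,r}$ by shifting $x_j\mapsto x_j-\tau$ \emph{only in the $\epsilon_j=+$ half} of each sum (this is exactly where the $g_{\xi_+,\xi_-}$- and $h$-equations are consumed), collapsing the two signs $\epsilon_j=\pm$ into a single term via \eqref{coeffcond4}, and checking a final one-line identity of $b$-ratios. Note also that $\overline{\mathcal{B}}^{\xi}(x;\mathbf{t})$ is odd, not invariant, under $x\mapsto-x$, and since $g_{\xi_+,\xi_-}$ is not an even function this symmetry cannot be used to symmetrize the sum as you suggest. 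Without the $F_r$/$G_r$ factorization, the exact transport lemma, and the explicit $\bm\epsilon$-and-$J$ expansion of the boundary Bethe vector, the bookkeeping you defer to Sections \ref{Bsection} and \ref{proofSection} cannot be carried out along the route you outline.
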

The proof of Theorem \ref{mr} is given in Section \ref{proofSection}.

\subsection{The special case $M=1$ and $\eta=0$}
Since $R(x;0)=\textup{id}_{\C^2 \otimes \C^2}$ the boundary quantum KZ equations \eqref{ReflqKZ} reduce
for $\eta=0$ to the decoupled set of difference equations
\begin{equation}\label{rKZ0}
f^{\eta=0}(\mathbf{t}+\tau \mathbf{e}_r)=
K^{\xi_+}_r\left(t_r+\tfrac{\tau}{2}\right)K^{\xi_-}_r(t_r)
f^{\eta=0}(\mathbf{t}),\qquad r=1,\ldots,N.
\end{equation}
In view of Theorem \ref{mr} we take a meromorphic function
$g_{\xi_+,\xi_-}^{\eta=0}$
satisfying
\[
g_{\xi_+,\xi_-}^{\eta=0}(x+\tau)=
\frac{b(\xi_--x)b(\xi_+-x-\tfrac{\tau}{2})}
{b(\xi_-+x+\tau)
b(\xi_++x+\tfrac{\tau}{2})}g_{\xi_+,\xi_-}^{\eta=0}(x)
\]
(note that we can take $h\equiv 1\equiv F$ for $\eta=0$)
and we consider the resulting sum
\[
f_1^{\eta=0}(\mathbf{t}):=\sum_{x \in x_0 + \tau \Z}
g_{\xi_+,\xi_-}^{\eta=0}(x)\frac{1}{b(\eta)}
\overline{\mathcal{B}}^{\xi_-}(x;\mathbf{t})\Omega|_{\eta=0}.
\]
Provided mero-uniform convergence, Theorem \ref{mr} claims that it should satisfy the $\eta=0$ reduction \eqref{rKZ0} of the
boundary quantum KZ equations \eqref{ReflqKZ}. Since
\begin{equation}\label{eta0B}
\frac{1}{b(\eta)}\overline{\mathcal{B}}^{\xi}(x;\mathbf{t})\Omega|_{\eta=0}=
\sum_{r=1}^N\sum_{\epsilon\in\{\pm\}}\frac{\epsilon b(\xi-\epsilon x)}
{b(-\epsilon x-t_r)}\mathbf{e}_+^{\otimes (r-1)}\otimes \mathbf{e}_-\otimes \mathbf{e}_+^{\otimes (N-r)}
\end{equation}
by Corollary \ref{mathcalYcor}, $f_1^{\eta=0}(\mathbf{t})$ decouples as
\[
f_1^{\eta=0}(\mathbf{t})=\sum_{r=1}^NH(t_r;\xi_+,\xi_-)\mathbf{e}_+^{\otimes (r-1)}
\otimes \mathbf{e}_-\otimes \mathbf{e}_+^{\otimes (N-r)}
\]
with
\begin{equation}\label{Hdef}
H(t;\xi_+,\xi_-):=\sum_{x \in x_0 + \tau \Z} \sum_{\epsilon\in\{\pm\}}
\frac{\epsilon b(\xi_--\epsilon x)}{b(-\epsilon x-t)}
g_{\xi_+,\xi_-}^{\eta=0}(x).
\end{equation}
Hence $f_1^{\eta=0}(\mathbf{t})$ is a solution of the $\eta=0$ reduction \eqref{rKZ0} of the boundary
quantum KZ equations if and only if
\begin{equation}\label{Hfr}
H(t+\tau;\xi_+,\xi_-)=
\frac{b(\xi_--t)b(\xi_+-t-\tfrac{\tau}{2})}
{b(\xi_-+t)b(\xi_++t+\tfrac{\tau}{2})}
H(t;\xi_+,\xi_-)
\end{equation}
for the bilateral sum $H(t;\xi_+,\xi_-)$. We now
detail the proof of \eqref{Hfr},
which illustrates nicely the subtleties
also occurring in the general proof of Theorem \ref{mr}.

The affine Weyl group $W$ of type $\widehat{A}_1$ is the Coxeter group with generators $\gamma_0,\gamma_1$ and relations $\gamma_0^2=e=\gamma_1^2$.
Let $W$ act on $\C$ by $\gamma_0(t)=-t+\tau$ and $\gamma_1(t)=-t$.
Note that $(\gamma_0\gamma_1)(t)=t+\tau$.
Let $\{C_w^{\xi_+,\xi_-}\}_{w\in W}$ be the $W$-cocycle of meromorphic functions on $\C$ characterized by the properties ($u,v\in W$),
\begin{equation*}
\begin{split}
C_e^{\xi_+,\xi_-}(t)&=1,\qquad C_{uv}^{\xi_+,\xi_-}(t)=
C_u^{\xi_+,\xi_-}(t)C_v^{\xi_+,\xi_-}(u^{-1}t),\\
C_{\gamma_0}^{\xi_+,\xi_-}(t)&=
\frac{b(\xi_+-t+\tfrac{\tau}{2})}{b(\xi_++t-\tfrac{\tau}{2})},\qquad
C_{\gamma_1}^{\xi_+,\xi_-}(t)=\frac{b(\xi_-+t)}{b(\xi_--t)}.
\end{split}
\end{equation*}
The cocycle $\{C_w^{\xi_+,\xi_-}\}_{w\in W}$ gives rise to a twisted $W$-action on the space of meromorphic functions on $\C$ by
\[
\bigl(w\cdot f\bigr)(t):=C_w^{\xi_+,\xi_-}(t)f(w^{-1}t),\qquad w\in W.
\]
\begin{lem}\label{AWH}
Provided mero-uniform convergence, $H(\cdot;\xi_+,\xi_-)$ is
$W\cdot$-invariant.
\end{lem}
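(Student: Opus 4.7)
\medskip

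\noindent\emph{Proof proposal.} The plan proceeds in four stages.

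First, simplify $H$ via the reflection-equation identity \eqref{coeffcond4} with $\xi=\xi_-$ and $z=-t$: this gives $b(\xi_-+x)b(x+t)+b(\xi_--x)b(x-t)=b(\xi_-+t)b(2x)$, which collapses the inner $\epsilon$-sum in \eqref{Hdef} to
\[
\sum_{\epsilon\in\{\pm\}}\frac{\epsilon\,b(\xi_--\epsilon x)}{b(-\epsilon x-t)} = -\frac{b(\xi_-+t)\,b(2x)}{b(x+t)b(x-t)},
\]
so that $H(t;\xi_+,\xi_-)=-b(\xi_-+t)\,S(t)$ with
\[
S(t):=\sum_{x\in x_0+\tau\Z}\frac{b(2x)}{b(x+t)b(x-t)}\,g^{\eta=0}_{\xi_+,\xi_-}(x).
\]

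Second, $\gamma_1\cdot H=H$ follows immediately from this form. Since $b$ is odd, $b(x+t)b(x-t)$ is even in $t$, hence $S(-t)=S(t)$, and therefore $H(-t)=-b(\xi_--t)S(t)=\tfrac{b(\xi_--t)}{b(\xi_-+t)}H(t)$, which rearranges to the required $C^{\xi_+,\xi_-}_{\gamma_1}(t)\,H(\gamma_1(t))=H(t)$.

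Third, reduce $\gamma_0\cdot H=H$ to a quasi-periodicity statement. Since $W=\langle\gamma_0,\gamma_1\rangle$ and $(\gamma_0\gamma_1)(t)=t+\tau$, the cocycle property together with the $\gamma_1$-invariance just established shows that $\gamma_0\cdot H=H$ is equivalent to \eqref{Hfr}. In terms of $S$ this is the bilateral sum identity
\[
b(\xi_-+t+\tau)\,b(\xi_++t+\tfrac{\tau}{2})\,S(t+\tau) = b(\xi_--t)\,b(\xi_+-t-\tfrac{\tau}{2})\,S(t).
\]

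Finally, prove the above identity by shifting $x\to x+\tau$ in $S(t+\tau)$---legitimate because the sum ranges over $x_0+\tau\Z$ and converges mero-uniformly---and rewriting $g^{\eta=0}_{\xi_+,\xi_-}(x+\tau)$ via its functional equation. The difference of the two sides then takes the form of a single bilateral sum $\sum_x\Delta(x,t)\,g^{\eta=0}_{\xi_+,\xi_-}(x)$ in which $\Delta(x,t)$ is a rational combination of $b$-values. The plan is to exhibit a telescoping certificate $A(x,t)$ satisfying
\[
\Delta(x,t)\,g^{\eta=0}_{\xi_+,\xi_-}(x) = A(x,t)\,g^{\eta=0}_{\xi_+,\xi_-}(x) - A(x+\tau,t)\,g^{\eta=0}_{\xi_+,\xi_-}(x+\tau),
\]
so that mero-uniform convergence forces the bilateral sum to vanish by telescoping. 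Constructing $A$ will be driven by the reflection identity \eqref{coeffcond3}: used with $\xi=\xi_+$ it handles the $\tau/2$-shifted factors, and used with $\xi=\xi_-$ the unshifted ones, thereby reconciling the asymmetric placement of the two $K$-matrices in $H$. The hard part will be precisely this last step---pinpointing the telescoping certificate---since it requires two coordinated specializations of \eqref{coeffcond3} to cancel the rational combination $\Delta(x,t)$ in a way compatible with the functional equation for $g^{\eta=0}_{\xi_+,\xi_-}$.
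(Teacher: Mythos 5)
Your first three stages are sound and essentially coincide with the paper's treatment of the easy half of the lemma: collapsing the $\epsilon$-sum via \eqref{coeffcond4} reproduces \eqref{application} and \eqref{Hcontracted} (your sign convention for the denominator is consistent with the paper's $b(t\pm x)=b(t+x)b(t-x)$), the $\gamma_1\cdot$-invariance then follows as you say, and the reduction of $\gamma_0\cdot$-invariance to the quasi-periodicity \eqref{Hfr} via the cocycle property is valid bookkeeping. The problem is that after these reductions \emph{all} of the content of the lemma is concentrated in the single identity
$b(\xi_-+t+\tau)\,b(\xi_++t+\tfrac{\tau}{2})\,S(t+\tau)=b(\xi_--t)\,b(\xi_+-t-\tfrac{\tau}{2})\,S(t)$,
and for that identity you offer only a plan: shift the whole sum, form $\sum_{x}\Delta(x,t)\,g^{\eta=0}_{\xi_+,\xi_-}(x)$, and ``exhibit a telescoping certificate $A(x,t)$''. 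The certificate is never constructed, and you yourself flag its construction as the hard part. Note that $\Delta(x,t)\not\equiv 0$ (after the shift the two terms have poles at $x=-t-2\tau$ and at $x=-t$ respectively), so no termwise cancellation occurs and a genuinely nontrivial mechanism is required; until $A$ is produced (or its existence in closed form is established), there is no proof.

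The paper avoids any telescoping by a different manipulation, and the difference is instructive. Having collapsed the $\epsilon$-sum at the outset, you have symmetrized $H$ in a way adapted to $\gamma_1$ but not to $\gamma_0$. The paper instead keeps $H(t)=L_+(t)-L_-(t)$ with $L_\epsilon(t)=\sum_{x}b(\xi_--\epsilon x)\,b(-\epsilon x-t)^{-1}g^{\eta=0}_{\xi_+,\xi_-}(x)$ and shifts the summation variable $x\mapsto x-\tau$ \emph{only in} $L_+$. After invoking the functional equation for $g^{\eta=0}_{\xi_+,\xi_-}$, the two halves recombine over a common denominator with numerator $Z(t;x)=b(t-x)b(\xi_++x-\tfrac{\tau}{2})+b(\tau-x-t)b(\xi_+-x+\tfrac{\tau}{2})$, which a single further application of the boundary crossing identity \eqref{coeffcond4} (with the $\xi_+$-parameters --- not \eqref{coeffcond3}, as your sketch guesses) collapses to $b(\xi_+-t+\tfrac{\tau}{2})b(\tau-2x)$. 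The resulting summand is manifestly invariant under $t\mapsto\tau-t$, which gives the $\gamma_0\cdot$-invariance directly, with no telescoping and no separate appeal to \eqref{Hfr}. A certificate for your version could in principle be extracted by unwinding these steps, but the clean fix is to replace your stage four by this asymmetric half-shift.
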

\begin{proof}
It suffices to prove the $\gamma_1\cdot$-and $\gamma_0\cdot$-invariance of $H(t;\xi_+,\xi_-)$, which are the respective statements
\begin{equation}\label{invarianceCoxeter}
\begin{split}
\frac{b(\xi_-+t)}{b(\xi_--t)}H(-t;\xi_+,\xi_-)&=H(t;\xi_+,\xi_-),\\
\frac{b(\xi_+-t+\tfrac{\tau}{2})}{b(\xi_++t-\tfrac{\tau}{2})}H(-t+\tau;\xi_+,
\xi_-)&=H(t;\xi_+,\xi_-).
\end{split}
\end{equation}

Observe that
\begin{equation}\label{application}
\sum_{\epsilon\in\{\pm\}}\frac{\epsilon b(\xi-\epsilon x)}{b(-\epsilon x-t)}=
\frac{b(\xi+t)b(2x)}{b(t\pm x)}
\end{equation}
due to the boundary crossing symmetry identity \eqref{coeffcond4}.
It follows that
\begin{equation}\label{Hcontracted}
\begin{split}
H(t;\xi_+,\xi_-)=b(\xi_-+t)\sum_{x \in x_0 + \tau \Z} \frac{b(2x)}{b(t\pm x)}
g_{\xi_+,\xi_-}^{\eta=0}(x),
\end{split}
\end{equation}
which immediately implies the $\gamma_1\cdot$-invariance of $H(t;\xi_+,\xi_-)$.

To prove the $\gamma_0\cdot$-invariance of $H(t;\xi_+,\xi_-)$ we write $H(t;\xi_+,\xi_-)=L_+(t)-L_-(t)$ with
\[
L_{\epsilon}(t):=\sum_{x \in x_0 + \tau \Z}
\frac{b(\xi_--\epsilon x)}{b(-\epsilon x-t)}g_{\xi_+,\xi_-}^{\eta=0}(x)
\]
and we replace in the expression of $L_+(t)$ the summation variable $x$ by $x-\tau$.
Using the functional equation satisfied by $g_{\xi_+,\xi_-}^{\eta=0}$ we obtain
\[
L_+(t)=\sum_{x\in x_0+\tau \Z}
\frac{b(\xi_-+x)b(\xi_++x-\tfrac{\tau}{2})}{b(-x-t+\tau)
b(\xi_+-x+\tfrac{\tau}{2})}
g_{\xi_+,\xi_-}^{\eta=0}(x)
\]
and consequently
\begin{equation*}
H(t;\xi_+,\xi_-)=
\sum_{x\in x_0+\tau \Z}\frac{b(\xi_-+x)Z(t;x)}{b(x+t-\tau)b(x-t)b(\xi_+-x+
\tfrac{\tau}{2})}g_{\xi_+,\xi_-}^{\eta=0}(x)
\end{equation*}
with
\[
Z(t;x):=b(t-x)b(\xi_++x-\tfrac{\tau}{2})+
b(\tau-x-t)b(\xi_+-x+\tfrac{\tau}{2}).
\]
By the boundary crossing symmetry identity \eqref{coeffcond4} we have
\[
Z(t;x)=
b(\xi_+-t+\tfrac{\tau}{2})b(\tau-2x),
\]
hence
\begin{equation*}
H(t;\xi_+,\xi_-)=
b(\xi_+-t+\tfrac{\tau}{2})\sum_{x\in x_0+\tau\Z}
\frac{b(\xi_-+x)b(\tau-2x)}{b(x+t-\tau)b(x-t)b(\xi_+-x+
\tfrac{\tau}{2})}g_{\xi_+,\xi_-}^{\eta=0}(x),
\end{equation*}
which immediately implies the $\gamma_0\cdot$-invariance of $H(t;\xi_+,\xi_-)$.
\end{proof}
Lemma \ref{AWH} implies
\[H(t+\tau;\xi_+,\xi_-)=H(\gamma_0\gamma_1t;\xi_+,\xi_-)=
C_{\gamma_1}^{\xi_+,\xi_-}(t)^{-1}C_{\gamma_0}^{\xi_+,\xi_-}(\gamma_1t)^{-1}
H(t;\xi_+,\xi_-),
\]
which is \eqref{Hfr}.
Note that in the proof of Lemma \ref{AWH} we only have used that
$b(x)$ is a nonzero meromorphic function in $x\in\C$
satisfying the boundary crossing symmetry
identity \eqref{coeffcond4}.

Note that the difference equation \eqref{Hfr} is equivalent to
\begin{equation}\label{special00}
\frac{b(\xi_-+t)}{b(\xi_--t)}H(-t;\xi_+,\xi_-)=
\frac{b(\xi_+-t+\tfrac{\tau}{2})}{b(\xi_++t-\tfrac{\tau}{2})}
H(-t+\tau;\xi_+,\xi_-),
\end{equation}
while Lemma \ref{AWH} is the stronger statement that both sides of
\eqref{special00} are equal to $H(t;\xi_+,\xi_-)$. Note that \eqref{special00} is equivalent to
\begin{equation}\label{special000}
\frac{b(\xi_+-t-\tfrac{\tau}{2})}{b(\xi_++t+\tfrac{\tau}{2})}
H(t;\xi_+,\xi_-)=
\frac{b(\xi_-+t)}{b(\xi_--t)}H(t+\tau;\xi_+,\xi_-).
\end{equation}
The general proof of Theorem \ref{mr} amounts to establishing a much more involved version of \eqref{special000}.

\begin{rema}
For arbitrary $M\geq 1$ it is easy to show that
\[
\frac{1}{b(\eta)^M}\overline{\mathcal{B}}_{\xi_-}^{(M)}(\mathbf{x};\mathbf{t})
|_{\eta=0}=
\sum_{\mathbf{m}}
\left(\sum_{w\in S_M}\prod_{r=1}^M\frac{b(\xi_-+t_{m_{r}})b(2x_{w(r)})}
{b(t_{m_{r}}\pm x_{w(r)})}\right)\sigma_{\mathbf{m}}^-
\]
with the summation over subsets $\mathbf{m}=\{m_i\}_{i=1}^M$ of $\{1,\ldots.N\}$ of cardinality $M$,
with $\sigma_{\mathbf{m}}^-:=\sigma_{m_1}^-\sigma_{m_2}^-\cdots\sigma_{m_M}^-$, and with $\sigma^-:=\left(\begin{matrix} 0 & 0\\ 1 & 0\end{matrix}\right)$.
It leads to the explicit expression
\[
f_M^{\eta=0}(\mathbf{t})=
\sum_{\mathbf{m}}\left(\sum_{w\in S_M}\prod_{r=1}^M
H_{x_{0,r}}(t_{m_{w(r)}};\xi_+,\xi_-)\right)\sigma_{\mathbf{m}}^-\Omega,
\]
from which it now directly follows that it satisfies the associated $\eta=0$ reduction
\[
f_M^{\eta=0}(\mathbf{t}+\tau\mathbf{e}_r)=K_{\xi_+,r}(t_r+\tfrac{\tau}{2})K_{\xi_-,r}(t_r)
f_M^{\eta=0}(\mathbf{t}),\qquad r=1,\ldots,N
\]
of the boundary quantum KZ equations \eqref{ReflqKZ}.
\end{rema}

\subsection{XXX spin-$\frac{1}{2}$ chain case}
In this subsection we make Theorem \ref{mr} concrete for
the initial data $b(x)=x$
related to the XXX spin-$\frac{1}{2}$ chain case.
By rescaling the summation variable
we may and will assume throughout this subsection that $\tau=-1$.

Solutions $g_{\xi_+,\xi_-}, h$ and $F$ of the resulting
functional relations
can be given in terms of Gamma functions,
\begin{equation*}
\begin{split}
g_{\xi_+,\xi_-}(x)&=\frac{\Gamma(x+\xi_--\tfrac{\eta}{2})
\Gamma(x+\xi_++\frac{1}{2}-\tfrac{\eta}{2})}
{\Gamma(x-\xi_-+1+\tfrac{\eta}{2})\Gamma(x-\xi_++\frac{1}{2}+\tfrac{\eta}{2})},\\
h(x)&=\frac{x\Gamma(x-\eta)}{\Gamma(x+1+\eta)},\\
F(x)&=\frac{\Gamma(x+\tfrac{\eta}{2})}{\Gamma(x-\tfrac{\eta}{2})}.
\end{split}
\end{equation*}
The resulting $V$-valued hypergeometric sum
\begin{equation*}
\begin{split}
f_M(\mathbf{t}):=\sum_{\mathbf{x}\in\mathbf{x}_0+\Z^M}
\Bigl(\prod_{i=1}^Mg_{\xi_+,\xi_-}(x_i)\Bigr)
&\Bigl(\prod_{1\leq i<j\leq M}h(x_i\pm x_j)\Bigr)\\
&\quad\times\Bigl(\prod_{r=1}^N\prod_{i=1}^MF(t_r\pm x_i)\Bigr)
\overline{\mathcal{B}}^{\xi_-,(M)}(\mathbf{x};\mathbf{t})\Omega
\end{split}
\end{equation*}
for generic $\mathbf{x}_0\in\C^M$
converges mero-uniformly in $\mathbf{t}\in
\C^N$ if $\Re(\eta)\geq 0$ and
\begin{equation}\label{convergence}
\Re\bigl(2\xi_++2\xi_-+2(N-1)\eta\bigr)<0.
\end{equation}
In fact, the boundary Bethe vector
$\overline{\mathcal{B}}^{\xi_-,(M)}(\mathbf{x};\mathbf{t}) \Omega$ is mero-uniformly bounded (with the obvious notion of mero-uniform boundedness).
This follows from \eqref{key} and the fact that $b(x)B(x;\mathbf{t})$
is mero-uniformly bounded
for $x\in x_0+\Z$ (generic $x_0$). By Stirling's asymptotic
formula for the Gamma function,
the factors $h(x_i\pm x_j)$ are bounded since $\Re(\eta)\geq 0$.
Applying Stirling's formula to the remaining factors of the summands leads to the convergency condition \eqref{convergence}.

Hence, under these assumptions on the parameters, the hypergeometric sum $f_M(\mathbf{t})$ is a meromorphic solution of the associated rational boundary quantum KZ equations \eqref{ReflqKZ} by Theorem \ref{mr}.

For $M=1$ and $\eta=0$ and the associated bilateral sum
$H(t;\xi_+,\xi_-)$ is
\begin{equation*}
\begin{split}
H(t;&\xi_+,\xi_-)=\\
&=-2\sum_{m=-\infty}^{\infty}
\frac{\Gamma(x_0+\xi_-+m)\Gamma(x_0+\xi_++\frac{1}{2}+m)}
{\Gamma(x_0-\xi_-+m+1)\Gamma(x_0-\xi_++\frac{1}{2}+m)}
\frac{(\xi_-+t)(x_0+m)}{(x_0+t+m)(x_0-t+m)}\\
&=-2\frac{(\xi_-+t)x_0}{(x_0+t)(x_0-t)}\frac{\Gamma(x_0+\xi_-)
\Gamma(x_0+\xi_++\frac{1}{2})}
{\Gamma(x_0-\xi_-+1)\Gamma(x_0-\xi_++\frac{1}{2})}\\
&\times{}_5H_5\left(\begin{matrix}
x_0+1 & x_0+t & x_0-t & x_0+\xi_- & x_0+\xi_++\frac{1}{2}\\
x_0 & x_0+t+1 & x_0-t+1 & x_0-\xi_-+1 & x_0-\xi_++\frac{1}{2}\end{matrix};1\right)
\end{split}
\end{equation*}
in view of \eqref{Hcontracted},
where we use the standard notation for bilateral hypergeometric
series. This bilateral sum is convergent for $\Re(2\xi_++2\xi_--1)<0$.
The slightly larger convergency domain compared to $f_1(\mathbf{t})$
comes from the fact that the boundary Bethe wave vector
at $\eta=0$ decays at infinity if $b(x)=x$,
see \eqref{eta0B} and \eqref{application}.

By Dougall's \cite{Do} ${}_5H_5$ summation formula
(see \cite[(2.3)]{Ba} for the summation formula in the present notations),
\[
H(t;\xi_+,\xi_-)=\textup{cst}
\frac{(\xi_-+t)\Gamma(x_0\pm t)\Gamma(1-x_0\pm t)}
{\Gamma(-\xi_-+1\pm t)\Gamma(-\xi_++\frac{1}{2}\pm t)}
\]
with $\textup{cst}$ independent of $t$, from which the difference equation
\eqref{Hfr} with $\tau=-1$ can be alternatively derived using
the functional equation $\Gamma(x+1)=x\Gamma(x)$ of the Gamma function.

\subsection{XXZ spin-$\frac{1}{2}$ chain case}
In this subsection we make Theorem \ref{mr} concrete for
the initial data $b(x)=\sinh(x)$
related to the XXZ spin-$\frac{1}{2}$ chain case.
We set $q:=e^\tau$ and we assume throughout this subsection that $\Re(\tau)<0$, so that
$|q|<1$.

Solutions $g_{\xi_+,\xi_-}, h$ and $F$ of the resulting
functional relations
can now be given in terms of $q$-Gamma functions or, equivalently, in terms
of $q$-shifted factorials
\[
\bigl(x;q\bigr)_{\infty}:=\prod_{i=0}^{\infty}(1-q^ix).
\]
We write $\bigl(x_1,\ldots,x_s;q\bigr)_{\infty}:=
\prod_{i=1}^s\bigl(x_i;q\bigr)_{\infty}$ for products of $q$-shifted
factorials. As solutions of the functional equations we take
\begin{equation*}
\begin{split}
g_{\xi_+,\xi_-}(x)&=e^{(\frac{2(\xi_-+\xi_+-\eta)}{\tau}+1)x}
\frac{\bigl(q^2e^{2(x+\xi_-)-\eta},qe^{2(x+\xi_+)-\eta};q^2\bigr)_{\infty}}
{\bigl(e^{2(x-\xi_-)+\eta},qe^{2(x-\xi_+)+\eta};q^2\bigr)_{\infty}},\\
h(x)&=e^{-\frac{2\eta x}{\tau}}(1-e^{2x})\frac{
\bigl(q^2e^{2(x-\eta)};q^2\bigr)_{\infty}}
{\bigl(e^{2(x+\eta)};q^2\bigr)_{\infty}},\\
F(x)&=e^{\frac{\eta x}{\tau}}\frac{\bigl(q^2e^{2x+\eta};q^2\bigr)_{\infty}}
{\bigl(q^2e^{2x-\eta};q^2\bigr)_{\infty}}.
\end{split}
\end{equation*}
The resulting $V$-valued basic hypergeometric sum
\begin{equation*}
\begin{split}
f_M(\mathbf{t}):=\sum_{\mathbf{x}\in\mathbf{x}_0+\tau\Z^M}
\Bigl(\prod_{i=1}^Mg_{\xi_+,\xi_-}(x_i)\Bigr)
&\Bigl(\prod_{1\leq i<j\leq M}h(x_i\pm x_j)\Bigr)\\
&\quad\times
\Bigl(\prod_{r=1}^N\prod_{i=1}^MF(t_r\pm x_i)\Bigr)
\overline{\mathcal{B}}^{\xi_-,(M)}(\mathbf{x};\mathbf{t})\Omega
\end{split}
\end{equation*}
converges mero-uniformly in $\mathbf{t}\in
\C^N$ for generic $\mathbf{x}_0\in\C^M$
if $\Re(\tau)<0$, $\Re(\eta)\geq 0$ and
\[
\Re(2\xi_++2\xi_-+2(N-1)\eta+\tau)<0.
\]
Under these assumptions, the basic hypergeometric sum $f_M(\mathbf{t})$ is a meromorphic solution of the associated trigonometric boundary quantum KZ equations \eqref{ReflqKZ} by Theorem \ref{mr}.

For $M=1$ and $\eta=0$ the associated bilateral sum
$H(t;\xi_+,\xi_-)$ (see \eqref{Hcontracted}) is
\begin{equation*}
\begin{split}
H(t;\xi_+,\xi_-)=
\sinh(\xi_-+t)&\sum_{x\in x_0+\tau\Z}\frac{(e^{4x}-1)}
{(1-e^{2(x+t)})(1-e^{2(x-t)})}\\
&\qquad\qquad\times
\frac{\bigl(q^2e^{2(x+\xi_-)},qe^{2(x+\xi_+)};q^2\bigr)_{\infty}}
{\bigl(e^{2(x-\xi_-)},qe^{2(x-\xi_+)};q^2\bigr)_{\infty}}
e^{(\frac{2(\xi_-+\xi_+)}{\tau}+1)x}.
\end{split}
\end{equation*}
It can be expressed as a very-well-poised ${}_6\psi_6$-series,
which in turn
can be evaluated using Bailey's \cite[(4.7)]{Ba} summation formula,
see \cite[Exerc. 5.17]{GR}. It gives
\[
H(t;\xi_+,\xi_-)=\textup{cst} (e^{\xi_-+t}-e^{-\xi_--t})
\frac{\bigl(q^2e^{2(\xi_-+t)},q^2e^{2(\xi_--t)},
qe^{2(\xi_++t)},qe^{2(\xi_+-t)};q^2\bigr)_{\infty}}
{\bigl(e^{2(x_0+t)},e^{2(x_0-t)},q^2e^{-2(x_0+t)},q^2e^{-2(x_0-t)};q^2\bigr)_{\infty}}
\]
with $\textup{cst}$ independent of $t$, from which the functional equation
\eqref{Hfr} can be alternatively derived using the functional equation $\bigl(x;q^2\bigr)_{\infty}=
(1-x)\bigl(q^2x;q^2\bigr)_{\infty}$.

\section{Properties of the boundary monodromy operators}\label{Bsection}
In this section we express the boundary $B$-operator $\overline{\mathcal{B}}^{\xi_-,(M)}(\mathbf{x};\mathbf{t})$ in terms of the operators $B(x;\mathbf{t})$ and $D(x;\mathbf{t})$ and we single out its explicit dependence on a fixed rapidity $t_r$.
Applied to the pseudo-vacuum vector $\Omega$, it gives an explicit expression of Sklyanin's \cite{Sk} boundary Bethe vector $\overline{\mathcal{B}}^{\xi_-,(M)}(\mathbf{x};\mathbf{t})\Omega$ in terms of the usual Bethe vectors $B^{(M)}(\mathbf{x};\mathbf{t})\Omega$ and it gives its explicit dependence on $t_r$.
These results will play a crucial role in the proof of Theorem \ref{mr} (see Section \ref{proofSection}).

\subsection{The $B$-operators}

By \eqref{RTT} we have for $w\in S_N$,
\[
\lbrack B_w(x;\mathbf{t}), B_w(y;\mathbf{t})\rbrack=0=
\lbrack D_w(x;\mathbf{t}), D_w(y;\mathbf{t})\rbrack,
\]
and for $\mathbf{y}=(y_1,\ldots,y_M)$,
\begin{equation}\label{Acommrelsigma}
\begin{split}
A_w(x;\mathbf{t})B_w^{(M)}(\mathbf{y};\mathbf{t})&=
\Bigl(\prod_{j=1}^M\frac{b(y_j-x+\eta)}{b(y_j-x)}\Bigr)
B_w^{(M)}(\mathbf{y};\mathbf{t})
A_w(x;\mathbf{t})\\
-\sum_{j=1}^M&\frac{b(\eta)}{b(y_j-x)}\Bigl(\prod_{i\not=j}
\frac{b(y_i-y_j+\eta)}{b(y_i-y_j)}\Bigr)
B_w^{(M)}((x,\mathbf{y}_{\widehat{\jmath}});\mathbf{t})A_w(y_j;\mathbf{t}),\\
D_w(x;\mathbf{t})B_w^{(M)}(\mathbf{y};\mathbf{t})&=
\Bigl(\prod_{j=1}^M\frac{b(x-y_j+\eta)}{b(x-y_j)}\Bigr)
B_w^{(M)}(\mathbf{y};\mathbf{t})
D_w(x;\mathbf{t})\\
-\sum_{j=1}^M&\frac{b(\eta)}{b(x-y_j)}\Bigl(\prod_{i\not=j}
\frac{b(y_j-y_i+\eta)}{b(y_j-y_i)}\Bigr)
B_w^{(M)}((x,\mathbf{y}_{\widehat{\jmath}});\mathbf{t})D_w(y_j;\mathbf{t}),
\end{split}
\end{equation}
where $\mathbf{y}_{\widehat{\jmath}}:=(y_1,\ldots,y_{j-1},y_{j+1},\ldots,y_M)$
and
\[
B_w^{(M)}(\mathbf{y};\mathbf{t}):=
B_{w}(y_1;\mathbf{t})B_w(y_2;\mathbf{t})\cdots
B_w(y_M;\mathbf{t})
\]
(we use the convention that empty products are equal to one).
The commutation relations \eqref{Acommrelsigma} are instrumental for the derivation of the Bethe ansatz equations of the Heisenberg $XXZ$ spin-$\frac{1}{2}$ chain with quasi-periodic boundary conditions using the algebraic Bethe ansatz, see, e.g., \cite{TF}.

\begin{prop}\label{straightening}
For $M\in\Z_{\geq 1}$ and $w\in S_N$,
\begin{equation}\label{straight}
\begin{split}
\overline{\mathcal{B}}^{\xi,(M)}_w(\mathbf{x};\mathbf{t})=
\sum_{\bm \epsilon}&\Bigl(\prod_{i=1}^M \epsilon_i b(\xi-\epsilon_ix_i-\tfrac{\eta}{2}) \Bigr)
\Bigl(\prod_{1\leq i<j\leq M} \frac{b(\epsilon_ix_i+\epsilon_jx_j+\eta)}
{b(\epsilon_ix_i+\epsilon_jx_j)}\Bigr)\\
&\qquad\qquad\quad\times
\Bigl(\prod_{i=1}^M B_w(-\epsilon_ix_i-\tfrac{\eta}{2};\mathbf{t})\Bigr)
\prod_{j=1}^M D_w(\epsilon_jx_j-\tfrac{\eta}{2};\mathbf{t})
\end{split}
\end{equation}
with the sum running over $\bm \epsilon=(\epsilon_1,\ldots,\epsilon_M)
\in\{\pm\}^M$.
\end{prop}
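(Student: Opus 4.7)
The plan is to prove Proposition \ref{straightening} by induction on $M$. For the base case $M=1$, I would establish the two-term identity
\[
\overline{\mathcal{B}}^\xi_w(x;\mathbf{t}) = \sum_{\epsilon\in\{\pm\}} \epsilon\, b(\xi-\epsilon x - \tfrac{\eta}{2})\, B_w(-\epsilon x - \tfrac{\eta}{2};\mathbf{t})\, D_w(\epsilon x - \tfrac{\eta}{2};\mathbf{t})
\]
by directly computing $\mathcal{B}^\xi_w(x;\mathbf{t})$, the $(+,-)$-entry of $\mathcal{U}^\xi_w(x;\mathbf{t}) = T_{w,0}(x;\mathbf{t})^{-1} K^\xi_0(x)^{-1} T_{w,0}(-x;\mathbf{t})$. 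Unitarity $R(x;\eta)R(-x;\eta)=\textup{id}$ lets one write $T_{w,0}(x;\mathbf{t})^{-1}$ as a product of $R$-matrices at negated spectral parameters, so that its $2\times 2$ auxiliary-space form is known in terms of the bulk operators $A_w, B_w, C_w, D_w$. Multiplying out the three auxiliary-space blocks, extracting the $(+,-)$-entry, combining the coefficients using the fundamental identity \eqref{fundamentalb}, and applying the renormalization and shift $x\mapsto x+\tfrac{\eta}{2}$ defining $\overline{\mathcal{B}}^\xi_w$ then yields the displayed two-term expression.

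For the inductive step, I assume the formula for $M-1$ and use commutativity of the $\overline{\mathcal{B}}^\xi_w$'s to write
\[
\overline{\mathcal{B}}^{\xi,(M)}_w(\mathbf{x};\mathbf{t}) = \overline{\mathcal{B}}^{\xi,(M-1)}_w(\mathbf{x}_{\widehat{M}};\mathbf{t})\cdot \overline{\mathcal{B}}^\xi_w(x_M;\mathbf{t}).
\]
After substituting the induction hypothesis and the base case, the new operator $B_w(-\epsilon_M x_M - \tfrac{\eta}{2})$ sits immediately to the right of $\prod_{j=1}^{M-1} D_w(\epsilon_j x_j - \tfrac{\eta}{2})$ and must be commuted past each of them using the second relation in \eqref{Acommrelsigma}. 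Each such commutation splits off a ``straight'' piece with factor $\frac{b(\epsilon_j x_j + \epsilon_M x_M + \eta)}{b(\epsilon_j x_j + \epsilon_M x_M)}$ and a ``swap'' piece in which the pair $B_w(-\epsilon_M x_M - \tfrac{\eta}{2}) D_w(\epsilon_j x_j - \tfrac{\eta}{2})$ is replaced by $B_w(\epsilon_j x_j - \tfrac{\eta}{2}) D_w(-\epsilon_M x_M - \tfrac{\eta}{2})$. Collecting the straight contributions across all $M-1$ commutations produces the target summand indexed by $\bm\epsilon=(\epsilon_1,\ldots,\epsilon_M)$, with the new pair factors $\prod_{i<M} \frac{b(\epsilon_i x_i + \epsilon_M x_M + \eta)}{b(\epsilon_i x_i + \epsilon_M x_M)}$ extending the $\prod_{i<j<M}$-product supplied by the inductive hypothesis.

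The main obstacle is the vanishing of the swap contributions. Each swap term contains a pair $B_w(-\epsilon_k x_k - \tfrac{\eta}{2})B_w(\epsilon_k x_k - \tfrac{\eta}{2})$ or $D_w(-\epsilon_k x_k - \tfrac{\eta}{2})D_w(\epsilon_k x_k - \tfrac{\eta}{2})$ which, by the commutativity of $B_w$'s (respectively $D_w$'s) among themselves, is invariant under $\epsilon_k\mapsto -\epsilon_k$; summing the accompanying coefficient over this sign then produces a scalar sum of the shape of the $K$-matrix identity \eqref{coeffcond3} (a consequence of \eqref{coeffcond4}), which vanishes. The technical work is to organize the $2^{M-1}$ terms generated by the iterated commutation so that these cancellations can be read off systematically from \eqref{coeffcond3}; once this is done, only the fully-straight terms survive, and they assemble into exactly the expression claimed by the proposition.
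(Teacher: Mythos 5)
Your overall strategy -- induction on $M$, with the base case extracted from the explicit block form of $\mathcal{U}^\xi_w$ and the inductive step driven by the $D$--$B$ commutation relations \eqref{Acommrelsigma}, the swap terms being killed by the reflection-equation identity \eqref{coeffcond3} -- is exactly the paper's proof. The inductive step in particular is correct in all essentials (the paper peels off the factor $\overline{\mathcal{B}}^\xi_w(x_1;\mathbf{t})$ on the left rather than $\overline{\mathcal{B}}^\xi_w(x_M;\mathbf{t})$ on the right, which is immaterial, and it avoids your $2^{M-1}$-term bookkeeping by invoking the product form of \eqref{Acommrelsigma} directly, so that only one straight term and $M-1$ swap terms appear; each swap coefficient is then a single instance of \eqref{coeffcond3}).

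The genuine gap is in your base case. Unitarity gives $T_{w,0}(x;\mathbf{t})^{-1}=R_{0,w(N)}(t_{w(N)}-x)\cdots R_{0,w(1)}(t_{w(1)}-x)$, but this \emph{reversed-order} product is the monodromy operator attached to the reversed permutation and the negated inhomogeneities; its auxiliary-space entries are \emph{not} the operators $A_w,\ldots,D_w$ at any shifted argument, so its ``$2\times 2$ auxiliary-space form'' is not ``known in terms of the bulk operators'' as you claim. Since the commutation relations \eqref{Acommrelsigma} and everything downstream are stated for a fixed $w$ and fixed $\mathbf{t}$, you cannot combine these entries with the $B_w(-x;\mathbf{t})$, $D_w(-x;\mathbf{t})$ coming from the right-hand factor $T_{w,0}(-x;\mathbf{t})$. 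The missing ingredient is the crossing symmetry \eqref{Rcrossing}, which is what converts $T_{w,0}(x;\mathbf{t})^{-1}$ into $\sigma^y_0\,T_{w,0}(x-\eta;\mathbf{t})^{T_0}\sigma^y_0$ up to an explicit scalar, whose entries \emph{are} $D_w(x-\eta;\mathbf{t})$, $-B_w(x-\eta;\mathbf{t})$, etc.; after the shift $x\mapsto x+\tfrac{\eta}{2}$ these become the operators at the arguments $\pm x-\tfrac{\eta}{2}$ appearing in the proposition. Once this is in place you still need one application of \eqref{Acommrelsigma} to reorder $D_w(x-\tfrac{\eta}{2})B_w(-x-\tfrac{\eta}{2})$ into $B_w(-x-\tfrac{\eta}{2})D_w(x-\tfrac{\eta}{2})$ plus an exchange term, and then the boundary crossing identity \eqref{coeffcond4} (a consequence of \eqref{fundamentalb}, as you note) to collapse the coefficients into the two-term expression; your phrase ``combining the coefficients using \eqref{fundamentalb}'' hides this reordering step.
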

\begin{proof}
By the crossing symmetry of the $R$-matrix \eqref{Rcrossing}
we have
\[
\mathcal{U}^\xi_{w}(x;\mathbf{t})=\Bigl(
\prod_{r=1}^N\frac{b(x-t_r)}{b(x-t_r-\eta)}\Bigr)
\sigma^y_0T_{w,0}(x-\eta;\mathbf{t})^{T_0}
\sigma^y_0K^\xi_0(x)^{-1}T_{w,0}(-x;\mathbf{t}),
\]
hence
\begin{equation*}
\begin{split}
\overline{\mathcal{B}}^\xi_w(x;\mathbf{t})=\frac{b(2x)}{b(2x+\eta)}
&\Bigl(b(\xi-x-\tfrac{\eta}{2})D_w(x-\tfrac{\eta}{2};\mathbf{t})
B_w(-x-\tfrac{\eta}{2};\mathbf{t})\\
&\qquad\qquad -b(\xi+x+\tfrac{\eta}{2})B_w(x-\tfrac{\eta}{2};\mathbf{t})
D_w(-x-\tfrac{\eta}{2};\mathbf{t})\Bigr).
\end{split}
\end{equation*}
Commuting $D_w(y;\mathbf{t})$ and $B_w(x;\mathbf{t})$ using
\eqref{Acommrelsigma} gives
\begin{equation*}
\begin{split}
b(2x+\eta)&\overline{\mathcal{B}}^\xi_w(x;\mathbf{t})=
b(\xi-x-\tfrac{\eta}{2})b(2x+\eta)B_w(-x-\tfrac{\eta}{2};\mathbf{t})
D_w(x-\tfrac{\eta}{2};\mathbf{t})\\
&-\bigl(b(\xi+x+\tfrac{\eta}{2})b(2x)+
b(\xi-x-\tfrac{\eta}{2})b(\eta)\bigr)
B_w(x-\tfrac{\eta}{2};\mathbf{t})D_w(-x-\tfrac{\eta}{2};\mathbf{t}).
\end{split}
\end{equation*}
By the boundary crossing symmetry identity \eqref{coeffcond4}
this simplifies to
\[
\overline{\mathcal{B}}^\xi_w(x;\mathbf{t})=
\sum_{\epsilon\in\{\pm\}}\epsilon b(\xi-\epsilon x-\tfrac{\eta}{2})
B_w(-\epsilon x-\tfrac{\eta}{2};\mathbf{t})
D_w(\epsilon x-\tfrac{\eta}{2};
\mathbf{t}).
\]
Write $L_w^{(M)}$ for the right hand side of \eqref{straight}.
To prove the induction step we write for $M\geq 2$,
\[
\overline{\mathcal{B}}^{\xi,(M)}_w(\mathbf{x};\mathbf{t})=
\overline{\mathcal{B}}^\xi_w(x_1;\mathbf{t})
\overline{\mathcal{B}}^{\xi,(M-1)}_w(x_2,\ldots,x_M;\mathbf{t}),
\]
use the induction hypothesis, and commute
$B_w^{(M-1)}(\mathbf{y};\mathbf{t})$
and $D_w(x;\mathbf{t})$ using \eqref{Acommrelsigma}.
The resulting formula can be rewritten as
\begin{equation*}
\begin{split}
L_w^{(M)}-\overline{\mathcal{B}}^{\xi,(M)}_w(\mathbf{x};\mathbf{t})=
\sum_{i=2}^M\sum_{\bm \epsilon}k_i(\bm \epsilon)
&B_w(\pm x_1-\tfrac{\eta}{2};\mathbf{t})
\Bigl(\prod_{\stackrel{j=2}{j\not=i}}^M
B_w(-\epsilon_jx_j-\tfrac{\eta}{2};\mathbf{t})\Bigr)\\
&\quad\times\Bigl(
\prod_{\stackrel{j=2}{j\not=i}}^MD_w(\epsilon_jx_j-\tfrac{\eta}{2};\mathbf{t})
\Bigr)D_w(\pm x_i-\tfrac{\eta}{2};\mathbf t)
\end{split}
\end{equation*}
with the second sum running over $\bm \epsilon =(\epsilon_2,\ldots,\epsilon_{i-1},
\epsilon_{i+1},\ldots,\epsilon_M)\in\{\pm\}^{M-2}$ and
with coefficient
\begin{equation*}
\begin{split}
k_i(\bm \epsilon)&=b(\eta)\Bigl(
\sum_{\epsilon_1,\epsilon_i\in\{\pm\}}\epsilon_1\epsilon_i
\frac{b(\xi-\epsilon_1x_1-\tfrac{\eta}{2})b(\xi-\epsilon_ix_i-
\tfrac{\eta}{2})}{b(\epsilon_1x_1+\epsilon_ix_i)}\Bigr)\\
&\times\Bigl(\prod_{\stackrel{j=2}{j\not=i}}^M\epsilon_jb(\xi-\epsilon_jx_j-
\tfrac{\eta}{2})\frac{b(\epsilon_jx_j\pm x_i+\eta)}
{b(\epsilon_jx_j\pm x_i)}\Bigr)
\prod_{\stackrel{2\leq j<j^\prime\leq M}{j\not=i\not=j^\prime}}
\frac{b(\epsilon_jx_j+\epsilon_{j^\prime}x_{j^\prime}+\eta)}
{b(\epsilon_jx_j+\epsilon_{j^\prime}x_{j^\prime})}.
\end{split}
\end{equation*}
By the reflection equation (see \eqref{coeffcond3})
we conclude that $k_i(\bm \epsilon)=0$.
Hence $\overline{\mathcal{B}}_w^{(M)}(\mathbf{x};\mathbf{t})$ $=$ $L_w^{(M)}$, as desired.
\end{proof}

\subsection{The action of the $B$-operators on the pseudo vacuum vector $\Omega$}
Write for $w\in S_N$,
\begin{equation*}
R_{0 \, w(1)}(x-t_{w(1)})R_{0 \, w(2)}(x-t_{w(2)})
\cdots R_{0 \, w(N-1)}(x-t_{w(N-1)})=
\begin{pmatrix} \widehat{A}_w(x;\mathbf{t}) \! &
\! \widehat{B}_w(x;\mathbf{t})\\
\widehat{C}_w(x;\mathbf{t}) \! & \! \widehat{D}_w(x;\mathbf{t})
\end{pmatrix}
\end{equation*}
as an operator on $\C^2\otimes V$. Here the coefficients $\widehat{A}_w(x;\mathbf{t}),\ldots,
\widehat{D}_w(x;\mathbf{t})$ are operators on $V$ which act trivially
on the $w(N)$-th tensor leg of $V$
and are independent of $t_{w(N)}$.
Observe furthermore that
\begin{equation*}
T_w(x;\mathbf{t})=
\begin{pmatrix}
\widehat{A}_w(x;\mathbf{t}) &
\widehat{B}_w(x;\mathbf{t})\\
\widehat{C}_w(x;\mathbf{t}) & \widehat{D}_w(x;\mathbf{t})
\end{pmatrix}
\begin{pmatrix} A_{w(N)}(x;t_{w(N)}) &
B_{w(N)}(x;t_{w(N)})\\
C_{w(N)}(x;t_{w(N)}) & D_{w(N)}(x;t_{w(N)})
\end{pmatrix}
\end{equation*}
with $A_{w(N)}(x;t),\ldots,D_{w(N)}(x;t)$ the operators
\eqref{B1} applied to the $w(N)$-th tensor leg of $V$.
The following lemma follows from \cite[Lem. 2.3]{R}.
\begin{lem}\label{BOmega}
For $M\in\Z_{\geq 1}$, $w\in S_N$
and $\mathbf{x}=(x_1,\ldots,x_M)$,
\[
B_w^{(M)}(\mathbf{x};\mathbf{t})\Omega=
\sum_{J\subseteq\{1,\ldots,M\}}Y_w^{J}(\mathbf{x};\mathbf{t})
\Bigl(\prod_{j\in J^c}B_{w(N)}(x_j;t_{w(N)})\Bigr)
\Bigl(\prod_{i\in J}\widehat{B}_w(x_i;\mathbf{t})\Bigr)\Omega,
\]
with
\begin{equation}\label{Y}
Y_w^{J}(\mathbf{x};\mathbf{t})
:=\Bigl(\prod_{i\in J}\frac{b(x_i-t_{w(N)})}{b(x_i-t_{w(N)}+\eta)}\Bigr)
\prod_{(i,j)\in J\times J^c}\frac{b(x_i-x_j+\eta)}{b(x_i-x_j)}
\end{equation}
and $J^c:=\{1,\ldots,M\}\setminus J$ (by convention we regard the empty set as a subset of $\{1,\ldots,M\}$).
\end{lem}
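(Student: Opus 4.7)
The natural route is to exploit the factorization $T_w(x;\mathbf{t}) = \widehat{T}_w(x;\mathbf{t})\cdot T_{w(N)}(x;t_{w(N)})$ displayed just before the lemma, which yields
\[
B_w(x;\mathbf{t}) \;=\; \widehat{A}_w(x;\mathbf{t})\,B_{w(N)}(x;t_{w(N)}) \;+\; \widehat{B}_w(x;\mathbf{t})\,D_{w(N)}(x;t_{w(N)}).
\]
Since every entry of $\widehat{T}_w$ acts trivially on the $w(N)$-th tensor leg while every entry of $T_{w(N)}(x;t_{w(N)})$ acts only on that leg, the ``hat'' operators commute with the $w(N)$-leg operators. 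Expanding $B_w^{(M)}(\mathbf{x};\mathbf{t})=\prod_{i=1}^M B_w(x_i;\mathbf{t})$ into $2^M$ terms indexed by a subset $J\subseteq\{1,\ldots,M\}$ (recording the factors that contribute the second summand $\widehat{B}_w D_{w(N)}$) and separating hat factors from single-site factors using this commutativity produces, for each $J$, an ordered hat-product to be applied to $\Omega_{\widehat{w(N)}}:=\mathbf{e}_+^{\otimes (N-1)}$ and an ordered single-site product to be applied to $\mathbf{e}_+$.

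I would then argue by induction on $M$. The base case $M=1$ is immediate: $\widehat{A}_w(x_1;\mathbf{t})\Omega_{\widehat{w(N)}}=\Omega_{\widehat{w(N)}}$ by the eigenvalue formula \eqref{eigenvalues} applied to the shorter chain $\widehat{T}_w$, and $D_{w(N)}(x_1;t_{w(N)})\mathbf{e}_+ = \tfrac{b(x_1-t_{w(N)})}{b(x_1-t_{w(N)}+\eta)}\mathbf{e}_+$ by \eqref{B1}, reproducing the two-term claim. For the inductive step I factor $B_w^{(M)}(\mathbf{x};\mathbf{t}) = B_w(x_1;\mathbf{t})\, B_w^{(M-1)}(x_2,\ldots,x_M;\mathbf{t})$, apply the inductive hypothesis, and then expand the leading $B_w(x_1;\mathbf{t})$ via the decomposition above. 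The resulting $\widehat{A}_w(x_1;\mathbf{t})$ must now be transported to the right past the already-present $\widehat{B}_w(x_i;\mathbf{t})$, $i\in J'$, via the hat analogue of \eqref{Acommrelsigma} (a direct consequence of the RTT relation \eqref{RTT} for $\widehat{T}_w$), so that it eventually acts as the identity on $\Omega_{\widehat{w(N)}}$.

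The main obstacle is the combinatorial bookkeeping of the correction terms produced in this commutation: each commutation of $\widehat{A}_w(x_1)$ past $\widehat{B}_w(x_i)$ generates, besides a ``direct'' term carrying the factor $\tfrac{b(x_i-x_1+\eta)}{b(x_i-x_1)}$, a ``swap'' term in which the roles of $x_1$ and $x_i$ are exchanged in the surviving $\widehat{B}_w$ factor. A careful accounting of direct versus swap contributions, summed over all intermediate subsets $J'$, should recombine to produce the symmetric factor $\prod_{(i,j)\in J\times J^c}\tfrac{b(x_i-x_j+\eta)}{b(x_i-x_j)}$ together with the single-site factor $\prod_{i\in J}\tfrac{b(x_i-t_{w(N)})}{b(x_i-t_{w(N)}+\eta)}$ appearing in $Y_w^J$. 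Since exactly this expansion for the usual algebraic Bethe ansatz is established in \cite[Lem.~2.3]{R}, the lemma follows by invoking that identity in our conventions; as a consistency check, note that only $J$ with $|J^c|\leq 1$ actually contribute to the right-hand side after acting on $\Omega$, because $B_{w(N)}(x;t)\mathbf{e}_-=0$ annihilates any term in which two or more $B_{w(N)}$ factors act successively on the $w(N)$-th tensor leg.
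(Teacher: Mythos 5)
Your proposal is correct and lands in essentially the same place as the paper, which offers no independent proof of this lemma and simply states that it follows from \cite[Lem. 2.3]{R} --- the citation your argument also ultimately invokes. The additional sketch you give (the factorization of $T_w(x;\mathbf{t})$ through the $w(N)$-th tensor leg, the $2^M$-term expansion indexed by $J$, and the recombination of the exchange terms arising from commuting $\widehat{A}_w$ past the $\widehat{B}_w$'s, which indeed requires \eqref{fundamentalb} to produce the coefficients $Y^J_w$) is a sound outline of how that cited lemma is actually proved.
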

Lemma \ref{BOmega} plays a crucial role in the construction of solutions of the Frenkel-Reshetikhin-Smirnov quantum KZ equations in \cite{R}.
The following formula will play a similar role for the boundary quantum KZ equations \eqref{ReflqKZ}.
\begin{cor}\label{mathcalYcor}
For $M\in\Z_{\geq 1}$ and $w\in S_N$ we have
\begin{equation}\label{key}
\begin{split}
{\overline{\mathcal{B}}}^{\xi,(M)}_w(\mathbf{x};\mathbf{t})\Omega &=
\sum_{\bm \epsilon\in\{\pm\}^M}\sum_{J\subseteq\{1,\ldots,M\}}
\mathcal{Y}^{\xi,\bm \epsilon,J}_w(\mathbf{x};\mathbf{t})
\\
& \quad \times \Bigl(\prod_{j\in J^c}B_{w(N)}(-\epsilon_jx_j-\tfrac{\eta}{2};
t_{w(N)})\Bigr) \Bigl(\prod_{i\in J} \widehat{B}_w(-\epsilon_i x_i
-\tfrac{\eta}{2};\mathbf{t})\Bigr)\Omega
\end{split}
\end{equation}
with
\begin{equation}\label{mathcalY}
\begin{split}
\mathcal{Y}^{\xi,\bm\epsilon,J}_w(\mathbf{x};\mathbf{t})&:=
\Bigl(\prod_{i=1}^M\epsilon_i b(\xi-\epsilon_i x_i-\tfrac{\eta}{2})
\prod_{r=1}^N\frac{b(\epsilon_ix_i-t_r-\tfrac{\eta}{2})}
{b(\epsilon_i x_i-t_r+\tfrac{\eta}{2})}\Bigr)\\
& \;  \times \Bigl(\prod_{1\leq i<j\leq M}\frac{b(\epsilon_i x_i+\epsilon_j x_j+\eta)}
{b(\epsilon_i x_i+\epsilon_j x_j)}\Bigr)
Y_w^J((-\epsilon_1 x_1-\tfrac{\eta}{2},\ldots,-\epsilon_M x_M-\tfrac{\eta}{2});
\mathbf{t}).
\end{split}
\end{equation}
\end{cor}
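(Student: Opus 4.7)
The plan is to chain together Proposition \ref{straightening} and Lemma \ref{BOmega}, with the eigenvalue relations \eqref{eigenvalues} bridging the two. Since the $D_w$-operators in the straightening formula sit to the right of the $B_w$-operators, applying $\overline{\mathcal{B}}^{\xi,(M)}_w(\mathbf{x};\mathbf{t})$ to $\Omega$ naturally splits into a diagonal $D_w$-action followed by a $B_w$-action. The proof is then largely bookkeeping of the arising scalar prefactors.

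Concretely, I would first apply Proposition \ref{straightening} to write
\[
\overline{\mathcal{B}}^{\xi,(M)}_w(\mathbf{x};\mathbf{t})\Omega=\sum_{\bm\epsilon\in\{\pm\}^M}C_w^{\bm\epsilon}(\mathbf{x})\Bigl(\prod_{i=1}^M B_w(-\epsilon_ix_i-\tfrac{\eta}{2};\mathbf{t})\Bigr)\Bigl(\prod_{j=1}^M D_w(\epsilon_jx_j-\tfrac{\eta}{2};\mathbf{t})\Bigr)\Omega,
\]
where
\[
C_w^{\bm\epsilon}(\mathbf{x})=\Bigl(\prod_{i=1}^M\epsilon_ib(\xi-\epsilon_ix_i-\tfrac{\eta}{2})\Bigr)\prod_{1\leq i<j\leq M}\frac{b(\epsilon_ix_i+\epsilon_jx_j+\eta)}{b(\epsilon_ix_i+\epsilon_jx_j)}.
\]
Next, since the $D_w(\cdot;\mathbf{t})$ pairwise commute and act diagonally on $\Omega$ by \eqref{eigenvalues}, an iterated application gives
\[
\prod_{j=1}^M D_w(\epsilon_jx_j-\tfrac{\eta}{2};\mathbf{t})\Omega=\Bigl(\prod_{j=1}^M\prod_{r=1}^N\frac{b(\epsilon_jx_j-t_r-\tfrac{\eta}{2})}{b(\epsilon_jx_j-t_r+\tfrac{\eta}{2})}\Bigr)\Omega,
\]
which is precisely the middle product appearing in the definition \eqref{mathcalY} of $\mathcal{Y}^{\xi,\bm\epsilon,J}_w$.

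After this reduction, the remaining object is $\prod_{i=1}^M B_w(-\epsilon_ix_i-\tfrac{\eta}{2};\mathbf{t})\Omega = B_w^{(M)}(-\bm\epsilon\mathbf{x}-\tfrac{\eta}{2};\mathbf{t})\Omega$, to which I would apply Lemma \ref{BOmega} with the rapidities $(x_1,\ldots,x_M)$ of the lemma replaced by $(-\epsilon_1x_1-\tfrac{\eta}{2},\ldots,-\epsilon_Mx_M-\tfrac{\eta}{2})$. This produces the sum over subsets $J\subseteq\{1,\ldots,M\}$, with operator content $\prod_{j\in J^c}B_{w(N)}(-\epsilon_jx_j-\tfrac{\eta}{2};t_{w(N)})\prod_{i\in J}\widehat{B}_w(-\epsilon_ix_i-\tfrac{\eta}{2};\mathbf{t})$ applied to $\Omega$, multiplied by the coefficient $Y_w^J$ from \eqref{Y} evaluated at the shifted arguments. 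Collecting the three scalar contributions ($C_w^{\bm\epsilon}$, the $D_w$-eigenvalue product, and $Y_w^J$) recovers $\mathcal{Y}^{\xi,\bm\epsilon,J}_w(\mathbf{x};\mathbf{t})$ as in \eqref{mathcalY}, completing the proof.

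There is no real obstacle beyond careful bookkeeping: Proposition \ref{straightening} already absorbed the non-trivial part of the argument (using the reflection equation in the disguise \eqref{coeffcond3}), and Lemma \ref{BOmega} handles the action of the bulk $B$-string on the pseudo-vacuum. The only mild subtlety to verify is that the $D_w$-product acts on $\Omega$ before any $B_w$-operator is applied, which is immediate from the order given by Proposition \ref{straightening}, and that the shifts $x_i\mapsto -\epsilon_ix_i-\tfrac{\eta}{2}$ in Lemma \ref{BOmega} are fed correctly into the definition of $Y_w^J$.
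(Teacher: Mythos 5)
Your proposal is correct and is precisely the paper's argument spelled out: the paper's proof is the single line ``Combine Lemma \ref{BOmega} with Proposition \ref{straightening} and \eqref{eigenvalues},'' and your three-step bookkeeping (straightening, $D_w$-eigenvalues on $\Omega$, then Lemma \ref{BOmega} at the shifted arguments $-\epsilon_ix_i-\tfrac{\eta}{2}$) is exactly that combination, with the scalar factors assembling correctly into $\mathcal{Y}^{\xi,\bm\epsilon,J}_w$.
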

\begin{proof}
Combine Lemma \ref{BOmega} with Proposition \ref{straightening} and \eqref{eigenvalues}.
\end{proof}

\section{Proof of Theorem \ref{mr}}\label{proofSection}
Let $s_r\in S_N$ be the simple neighbouring transposition $r\leftrightarrow r+1$ ($1\leq r<N$). For $1\leq r\leq N$ write
\[
e_r \mathbf{t}:=(t_1,\ldots,t_{r-1},-t_r,t_{r+1},\ldots,t_N).
\]
\begin{lem}\label{fundrel}
For all $w\in S_N$ and $1\leq r<N$,
\begin{equation*}
\begin{split}
R_{w(r+1) \, w(r)}(t_{w(r+1)}-t_{w(r)}) T_w(x;\mathbf{t})&=
T_{w s_r}(x;\mathbf{t}) R_{w(r+1) \, w(r)}(t_{w(r+1)}-t_{w(r)}),\\
R_{w(r+1) \, w(r)}(t_{w(r+1)}-t_{w(r)}) \mathcal{U}^\xi_w(x;\mathbf{t})&=
\mathcal{U}^\xi_{w s_r}(x;\mathbf{t}) R_{w(r+1) \, w(r)}(t_{w(r+1)}-t_{w(r)}),\\
K^\xi_{w(1)}(t_{w(1)})\mathcal{U}^\xi_w(x;\mathbf{t})&=
\mathcal{U}^\xi_w(x;e_{w(1)} \mathbf{t}) K^\xi_{w(1)}(t_{w(1)}).
\end{split}
\end{equation*}
\end{lem}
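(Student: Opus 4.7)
The three identities will be proved in turn, each built on its predecessor.

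For~(1), set $U:=R_{w(r+1)\,w(r)}(t_{w(r+1)}-t_{w(r)})$. Since $U$ acts only on tensor legs $w(r),w(r+1)$, it commutes with every factor $R_{0,w(k)}(x-t_{w(k)})$ of $T_w(x;\mathbf{t})$ with $k\ne r,r+1$ (disjoint supports). Moving $U$ through all such factors (both those at positions $<r$ and those at positions $>r+1$), identity~(1) reduces to the local exchange
\[
U\cdot R_{0,w(r)}(x-t_{w(r)})R_{0,w(r+1)}(x-t_{w(r+1)}) = R_{0,w(r+1)}(x-t_{w(r+1)})R_{0,w(r)}(x-t_{w(r)})\cdot U,
\]
which is exactly the YBE~\eqref{YBE} applied with tensor legs $(1,2,3)=(0,w(r+1),w(r))$ and spectral parameters $(x,y)=(x-t_{w(r+1)},t_{w(r+1)}-t_{w(r)})$.

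For~(2), substitute the definition $\mathcal{U}^\xi_w(x;\mathbf{t})=T_w(x;\mathbf{t})^{-1}K^\xi_0(x)^{-1}T_w(-x;\mathbf{t})$. Identity~(1) used at spectral parameter $x$, after taking inverses of both sides, yields $U\,T_w(x;\mathbf{t})^{-1}=T_{ws_r}(x;\mathbf{t})^{-1}\,U$; used directly at $-x$, it yields $U\,T_w(-x;\mathbf{t})=T_{ws_r}(-x;\mathbf{t})\,U$. Finally, $U$ and $K^\xi_0(x)$ have disjoint supports and hence commute. Sliding $U$ from left to right through the three factors of $\mathcal{U}^\xi_w(x;\mathbf{t})$ therefore produces $\mathcal{U}^\xi_{ws_r}(x;\mathbf{t})\,U$, as required.

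For~(3), factor off the unique $t_{w(1)}$-dependence of $T_w(\pm x;\mathbf{t})$ by writing $T_w(x;\mathbf{t})=R_{0,w(1)}(x-t_{w(1)})\,\widetilde{T}_w(x;\mathbf{t})$, where $\widetilde{T}_w(\pm x;\mathbf{t})$ acts trivially on leg $w(1)$ and is independent of $t_{w(1)}$. Set $K_1:=K^\xi_{w(1)}(t_{w(1)})$, $K_0:=K^\xi_0(x)$, $R_1:=R_{0,w(1)}(x-t_{w(1)})$ and $R_1':=R_{0,w(1)}(x+t_{w(1)})$, and rewrite $R_{0,w(1)}(-x-t_{w(1)})=(R_1')^{-1}$ and $R_{0,w(1)}(-x+t_{w(1)})=R_1^{-1}$ using unitarity. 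Since $K_1$ commutes with $\widetilde{T}_w(\pm x;\mathbf{t})$ (disjoint supports), the identity reduces to the single-site statement
\[
K_1\,R_1^{-1}\,K_0^{-1}\,(R_1')^{-1} = (R_1')^{-1}\,K_0^{-1}\,R_1^{-1}\,K_1.
\]
This is obtained from the reflection equation~\eqref{reaux} specialised to $(1,2)=(0,w(1))$ and $y=t_{w(1)}$, namely $R_1 K_0 R_1' K_1 = K_1 R_1' K_0 R_1$, by inverting both sides and then multiplying on the left and right by $K_1$. The main technical obstacle is the bookkeeping in~(3): one must carefully invoke unitarity to recognise the inverses $R_1^{-1},(R_1')^{-1}$ inside $T_w(-x;\mathbf{t})$ and $T_w(x;e_{w(1)}\mathbf{t})$, and identify the inverted reflection equation in the resulting expression. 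The whole lemma is really nothing more than the YBE together with the single-site reflection equation, organised by the disjoint-support bookkeeping familiar from the algebraic Bethe ansatz.
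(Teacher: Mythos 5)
Your proof is correct and follows exactly the route the paper takes (the paper's own proof is a one-line appeal to the Yang--Baxter equation for the first identity and the reflection equation for the third); you have simply filled in the disjoint-support bookkeeping, the unitarity substitutions, and the precise leg/parameter specialisations, all of which check out.
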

\begin{proof}
The first equality follows from the Yang-Baxter equation \eqref{YBE}, the second follows immediately from the first, and the third equality follows from the reflection equation \eqref{reaux}.
\end{proof}

Fix $\xi_+,\xi_-\in\C$.
The boundary quantum KZ equations \eqref{ReflqKZ}  are equivalent to
\begin{equation}\label{ReflqKZALT}
K^{\xi_+}_r\bigl(t_r+\tfrac{\tau}{2}\bigr)F_r(\mathbf{t};\xi_-)
f(\mathbf{t})=
G_r(\mathbf{t})f(\mathbf{t}+\tau \mathbf{e}_r),\qquad r=1,\ldots,N
\end{equation}
with the linear operators $F_r(\mathbf{t};\xi_-)$
and $G_r(\mathbf{t})$  on $V$ given by
\begin{equation*}
\begin{split}
F_r(\mathbf{t};\xi_-):=&R_{Nr}(t_N+t_r)\cdots
R_{r\!+\!1 \, r}(t_{r+1}+t_r)\\
\times& R_{r\!-\!1 \, r}(t_{r-1}+t_r)\cdots R_{1r}(t_1+t_r)
K^{\xi_-}_r(t_r)\\
\times& R_{r1}(t_r-t_1)\cdots R_{r\, r\!-\!1}(t_r-t_{r-1})
\end{split}
\end{equation*}
and
\begin{equation*}
G_r(\mathbf{t}):=R_{Nr}(t_N-t_r-\tau)\cdots
R_{r\!+\!1 \, r}(t_{r+1}-t_r-\tau).
\end{equation*}

\begin{cor}\label{cor1}
For $1\leq r\leq N$ we have
\begin{equation*}
\begin{split}
F_r(\mathbf{t};\xi_-)\mathcal{U}^{\xi_-}(x;\mathbf{t})&=
\mathcal{U}^{\xi_-}_{s_r\cdots s_{N-1}}(x;e_r\mathbf{t})
F_r(\mathbf{t};\xi_-),\\
G_r(\mathbf{t})\mathcal{U}^{\xi_-}(x;\mathbf{t}+\tau \mathbf{e}_r)
&=\mathcal{U}^{\xi_-}_{s_r \cdots s_{N-1}}(x;\mathbf{t}+\tau \mathbf{e}_r)
G_r(\mathbf{t}).
\end{split}
\end{equation*}
\end{cor}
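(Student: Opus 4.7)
The plan is to iterate the three identities of Lemma \ref{fundrel} in order to commute each factor of $F_r(\mathbf{t};\xi_-)$ (respectively $G_r(\mathbf{t})$) past $\mathcal{U}^{\xi_-}(x;\mathbf{t})$ (respectively $\mathcal{U}^{\xi_-}(x;\mathbf{t}+\tau\mathbf{e}_r)$), keeping careful track of how the subscript $w$ of $\mathcal{U}^{\xi_-}_w$ evolves at each step. The claim will follow once I verify that after all factors have been pushed through, the accumulated permutation equals $s_r s_{r+1} \cdots s_{N-1}$ and that the spectral parameter has been transformed appropriately (to $e_r\mathbf{t}$ for the first identity, and unchanged for the second).

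For the first identity I would process $F_r$ in three phases, pushing factors through $\mathcal{U}^{\xi_-}_e(x;\mathbf{t})$ starting from the rightmost. Phase 1 pushes the ``inner'' R-matrices $R_{r,r-1}(t_r-t_{r-1}),\ldots,R_{r,1}(t_r-t_1)$ via the second identity of Lemma \ref{fundrel}; a short induction (using that at each step the indices $w(j),w(j+1)$ match those of the next R-matrix being pushed) yields the subscript $u := s_{r-1}s_{r-2}\cdots s_1$, which is the permutation satisfying $u(1)=r$ and $u(k)=k-1$ for $2\leq k\leq r$. Phase 2 pushes $K^{\xi_-}_r(t_r)$ through via the third identity of Lemma \ref{fundrel}; since $u(1)=r$ this is exactly the required form, and the push flips the $r$-th rapidity in the spectral parameter from $t_r$ to $-t_r$. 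Phase 3 pushes the ``outer'' R-matrices $R_{1r}(t_1+t_r),\ldots,R_{r-1,r}(t_{r-1}+t_r),R_{r+1,r}(t_{r+1}+t_r),\ldots,R_{Nr}(t_N+t_r)$ again via the second identity of Lemma \ref{fundrel}; crucially, in the tuple $e_r\mathbf{t}$ the $r$-th coordinate is $-t_r$, so each argument $t_k+t_r$ reads as $t_k-(-t_r)$, matching the required form $t_{w(j+1)}-t_{w(j)}$. Tracking the subscript, the first $r-1$ outer pushes telescope as $u\cdot s_1 s_2\cdots s_{r-1} = s_{r-1}\cdots s_2 s_1\cdot s_1 s_2\cdots s_{r-1} = e$ via repeated cancellation $s_j s_j = e$, returning the subscript to the identity, while the remaining $N-r$ pushes then build up $s_r s_{r+1}\cdots s_{N-1}$.

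The second identity is simpler since $G_r(\mathbf{t})$ consists only of R-matrices: writing $\tilde{\mathbf{t}}:=\mathbf{t}+\tau\mathbf{e}_r$, the arguments $t_k-t_r-\tau$ equal $\tilde{t}_k-\tilde{t}_r$, and pushing $R_{r+1,r},R_{r+2,r},\ldots,R_{Nr}$ successively through $\mathcal{U}^{\xi_-}_e(x;\tilde{\mathbf{t}})$ produces the subscripts $s_r, s_r s_{r+1}, \ldots, s_r s_{r+1}\cdots s_{N-1}$, immediately giving the desired identity. The main work in both arguments is bookkeeping, and I expect the only subtle point to be the sign flip in Phase 2 of the first identity: it is precisely this flip which converts each $t_k+t_r$ into $t_k-(-t_r)$ and thereby allows the second identity of Lemma \ref{fundrel} to be applied in Phase 3. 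No substantive obstacle beyond this combinatorial tracking is anticipated, since Lemma \ref{fundrel} supplies exactly the three commutation rules required.
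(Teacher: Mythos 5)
Your proposal is correct and is precisely the paper's argument: the paper's proof of Corollary \ref{cor1} consists of the single sentence ``This follows by repeated application of Lemma \ref{fundrel}.'' Your bookkeeping of the intermediate subscripts ($s_{r-1}\cdots s_1$ before the $K$-matrix push, the telescoping back to $e$, then the build-up to $s_r\cdots s_{N-1}$) and of the sign flip $t_k+t_r=t_k-(-t_r)$ produced by $e_r\mathbf{t}$ is exactly the verification the paper leaves implicit, and it checks out.
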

\begin{proof}
This follows by repeated application of Lemma \ref{fundrel}.
\end{proof}
\begin{cor}\label{qKZalt}
For $1\leq r\leq N$, $M\in\Z_{\geq 1}$ and $\mathbf{x}=
(x_1,\ldots,x_M)$,
\begin{equation*}
\begin{split}
F_r(\mathbf{t};\xi_-)\overline{\mathcal{B}}^{\xi_-,(M)}(\mathbf{x};
\mathbf{t})&= \Bigl(\prod_{i=1}^M\frac{b(x_i+t_r+\tfrac{\eta}{2})b(x_i-t_r-\tfrac{\eta}{2})}
{b(x_i+t_r-\tfrac{\eta}{2})b(x_i-t_r+\tfrac{\eta}{2})}\Bigr) \\
& \qquad \qquad \times \overline{\mathcal{B}}^{\xi_-,(M)}_{s_r\cdots s_{N-1}}(\mathbf{x};e_r\mathbf{t})
F_r(\mathbf{t};\xi_-),\\
G_r(\mathbf{t})\overline{\mathcal{B}}^{\xi_-,(M)}(\mathbf{x};
\mathbf{t}+\tau \mathbf{e}_r)&=
\overline{\mathcal{B}}^{\xi_-,(M)}_{s_r\cdots s_{N-1}}(\mathbf{x};
\mathbf{t}+\tau \mathbf{e}_r)
G_r(\mathbf{t}).
\end{split}
\end{equation*}
\end{cor}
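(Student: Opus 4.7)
My plan is to deduce both identities of the corollary directly from Corollary \ref{cor1}, together with the scalar renormalization built into the definition of $\overline{\mathcal{B}}^{\xi_-}_w(x;\mathbf{t})$. I would first establish the $M=1$ statements and then iterate.

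Since $F_r(\mathbf{t};\xi_-)$ and $G_r(\mathbf{t})$ act as the identity on the auxiliary factor $\C^2$, reading off the upper-right $(+,-)$ auxiliary matrix entry of the intertwining relations in Corollary \ref{cor1} yields
\[ F_r(\mathbf{t};\xi_-)\,\mathcal{B}^{\xi_-}(x;\mathbf{t}) = \mathcal{B}^{\xi_-}_{s_r\cdots s_{N-1}}(x;e_r\mathbf{t})\,F_r(\mathbf{t};\xi_-) \]
and the analogous relation for $G_r$ with rapidity vector $\mathbf{t}+\tau\mathbf{e}_r$ on both sides. The renormalization takes the form $\overline{\mathcal{B}}^{\xi_-}_w(x;\mathbf{t}) = \rho(x;\mathbf{t})\,\mathcal{B}^{\xi_-}_w(x+\tfrac{\eta}{2};\mathbf{t})$ with scalar
\[ \rho(x;\mathbf{t}) := \Bigl(\prod_{s=1}^N \frac{b(x-t_s-\tfrac{\eta}{2})}{b(x-t_s+\tfrac{\eta}{2})}\Bigr)\frac{b(\xi_--x-\tfrac{\eta}{2})\,b(2x)}{b(2x+\eta)}, \]
which is independent of $w$ and commutes with any operator on $V$.

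For the $G_r$-identity both sides are evaluated at the common rapidity $\mathbf{t}+\tau\mathbf{e}_r$, so the scalar $\rho$-factors coincide and cancel, giving the desired equality immediately. For the $F_r$-identity I would compute the quotient $\rho(x;\mathbf{t})/\rho(x;e_r\mathbf{t})$; since $(e_r\mathbf{t})_s = t_s$ for $s\neq r$ and the boundary crossing factor $b(\xi_--x-\tfrac{\eta}{2})b(2x)/b(2x+\eta)$ is $\mathbf{t}$-independent, only the $s=r$ factor survives, producing exactly the scalar prefactor in the corollary (with $x$ replaced by $x_i$).

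The iteration to general $M$ is routine: the operators $\overline{\mathcal{B}}^{\xi_-}(x;\mathbf{t})$ commute pairwise in $x$ (as noted just after their definition, this is a consequence of \eqref{RU}), and the same holds for the $s_r\cdots s_{N-1}$-variants at either $e_r\mathbf{t}$ or $\mathbf{t}+\tau\mathbf{e}_r$. Thus applying the $M=1$ identity $M$ times moves $F_r$ (resp.\ $G_r$) successively past each factor in the product, accumulating the product $\prod_{i=1}^M$ of single-$x$ prefactors in the $F_r$-case and no extra factors in the $G_r$-case. The entire argument is bookkeeping and I do not anticipate any serious obstacle; the one subtlety worth flagging is the observation that the boundary crossing part of $\rho$ is $\mathbf{t}$-independent, without which the prefactor computation in the $F_r$-case would not collapse to a single $s=r$ term.
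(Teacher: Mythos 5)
Your argument is correct and is essentially the proof given in the paper: both extract the $\mathcal{B}$-component of the intertwining relations of Corollary \ref{cor1} (using that $F_r$ and $G_r$ act trivially on the auxiliary space) and then observe that rewriting in terms of $\overline{\mathcal{B}}^{\xi_-}_w$ produces exactly the stated scalar prefactor, since only the $s=r$ factor of the normalization changes under $t_r\mapsto -t_r$ while nothing changes when both sides carry the rapidity $\mathbf{t}+\tau\mathbf{e}_r$. Your explicit $M=1$ plus iteration step is just a more detailed bookkeeping of what the paper states directly for general $M$.
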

\begin{proof}
Set $\mathcal{B}^{\xi,(M)}_w(\mathbf{x};\mathbf{t}):=
\prod_{i=1}^M\mathcal{B}^\xi_w(x_i;\mathbf{t})$. Since
$F_r(\mathbf{t};\xi_-)$ and $G_r(\mathbf{t})$ are acting trivially
on the auxiliary space, Corollary \ref{cor1} implies
\begin{equation*}
\begin{split}
F_r(\mathbf{t};\xi_-)\mathcal{B}^{\xi_-,(M)}(\mathbf{x};
\mathbf{t})&=\mathcal{B}^{\xi_-,(M)}_{s_r\cdots s_{N-1}}(\mathbf{x};
e_r\mathbf{t})
F_r(\mathbf{t};\xi_-),\\
G_r(\mathbf{t})\mathcal{B}^{\xi_-,(M)}(\mathbf{x};
\mathbf{t}+\tau \mathbf{e}_r)&=
\mathcal{B}^{\xi_-,(M)}_{s_r\cdots s_{N-1}}(\mathbf{x};\mathbf{t}+\tau \mathbf{e}_r)
G_r(\mathbf{t}).
\end{split}
\end{equation*}
Rewriting these relations in terms of
$\overline{\mathcal{B}}^{\xi_-}_w$ gives the desired result.
\end{proof}
Let $\mathcal{C}^{(M)}\subset\C^M$ be a discrete subset and
$w^{(r)}(\mathbf{x};\mathbf{t};\xi_+,\xi_-)$ ($\mathbf{x}\in\mathcal{C}^{(M)}$)
a weight function depending meromorphically on
$\mathbf{t}\in\C^N$.
Provided mero-uniform convergence,
\[
f_M(\mathbf{t}):=\sum_{\mathbf{x}\in\mathcal{C}^{(M)}}w^{(M)}
(\mathbf{x};\mathbf{t};\xi_+,
\xi_-)\overline{\mathcal{B}}^{\xi_-,(M)}(\mathbf{x};\mathbf{t})\Omega
\]
solves the boundary quantum KZ equations \eqref{ReflqKZALT}
iff
\begin{equation*}
\begin{split}
\sum_{\mathbf{x}\in\mathcal{C}^{(M)}}w^{(M)}(\mathbf{x};\mathbf{t};&\xi_+,\xi_-)
\Bigl(\prod_{i=1}^M\frac{b(x_i+t_r+\tfrac{\eta}{2})b(x_i-t_r-\tfrac{\eta}{2})}
{b(x_i+t_r-\tfrac{\eta}{2})b(x_i-t_r+\tfrac{\eta}{2})}\Bigr)\\
&\qquad\qquad\qquad\qquad
\times K^{\xi_+}_r(t_r+\tfrac{\tau}{2})
\overline{\mathcal{B}}^{\xi_-,(M)}_{s_r\cdots s_{N-1}}(\mathbf{x};
e_r\mathbf{t})\Omega\\
&=\sum_{\mathbf{x}\in\mathcal{C}^{(M)}}w^{(M)}(\mathbf{x};\mathbf{t}+\tau \mathbf{e}_r;
\xi_+,\xi_-)\overline{\mathcal{B}}^{\xi_-,(M)}_{s_r\cdots s_{N-1}}
(\mathbf{x};\mathbf{t}+\tau \mathbf{e}_r)\Omega
\end{split}
\end{equation*}
for $r=1,\ldots,N$ in view of Corollary \ref{qKZalt}.
Substitute \eqref{key} and use that
\[
\widehat{B}_{s_rs_{r+1}\cdots s_{N-1}}(x;e_r\mathbf{t})=
\widehat{B}_{s_rs_{r+1}\cdots s_{N-1}}(x;\mathbf{t})=
\widehat{B}_{s_rs_{r+1}\cdots s_{N-1}}(x;\mathbf{t}+\tau \mathbf{e}_r)
\]
since $s_rs_{r+1}\cdots s_{N-1}(N)=r$ and since
$\widehat{B}_w(x;\mathbf{t})$ is independent of $t_{w(N)}$.
It leads to the following result.
\begin{lem}\label{aaa}
Provided mero-uniform convergence,
\[
f_M(\mathbf{t}):=\sum_{\mathbf{x}\in\mathcal{C}^{(M)}}
w^{(M)}(\mathbf{x};\mathbf{t};\xi_+,\xi_-)
\overline{\mathcal{B}}^{\xi_-,(M)}(\mathbf{x};\mathbf{t})\Omega
\]
satisfies the boundary quantum KZ equations \eqref{ReflqKZALT} iff
\begin{equation}\label{lhs}
\begin{split}
&\sum_{\mathbf{x},\bm\epsilon,J}w^{(M)}(\mathbf{x};\mathbf{t};\xi_+,\xi_-)
\Bigl(\prod_{i=1}^M\frac{b(x_i+t_r+\tfrac{\eta}{2})b(x_i-t_r-\tfrac{\eta}{2})}
{b(x_i+t_r-\tfrac{\eta}{2})b(x_i-t_r+\tfrac{\eta}{2})}
\Bigr)
\mathcal{Y}_{s_r\cdots s_{N-1}}^{\xi_-,\bm\epsilon,J}(\mathbf{x};e_r\mathbf{t})\\
&\quad\times K^{\xi_+}_r(t_r+\tfrac{\tau}{2})
\Bigl(\prod_{j\in J^c}B_{r}(-\epsilon_jx_j-\tfrac{\eta}{2};-t_r)\Bigr)
\Bigl(\prod_{i\in J}\widehat{B}_{s_r\cdots s_{N-1}}(-\epsilon_ix_i-\tfrac{\eta}{2};
\mathbf{t})\Bigr)\Omega
\end{split}
\end{equation}
equals
\begin{equation}\label{rhs}
\begin{split}
\sum_{\mathbf{x},\bm\epsilon,J}&w^{(M)}(\mathbf{x};\mathbf{t}+\tau \mathbf{e}_r;\xi_+,\xi_-) \mathcal{Y}_{s_r\cdots s_{N-1}}^{\xi_-,\bm\epsilon,J}(\mathbf{x};\mathbf{t}+\tau \mathbf{e}_r)\\
&\times\Bigl(\prod_{j\in J^c}B_{r}(-\epsilon_jx_j-\tfrac{\eta}{2};t_r+
\tau)\Bigr)\Bigl(\prod_{i\in J}\widehat{B}_{s_r\cdots s_{N-1}}
(-\epsilon_ix_i-\tfrac{\eta}{2};\mathbf{t})\Bigr)\Omega
\end{split}
\end{equation}
for $r=1,\ldots,N$,
where the summations are over $\mathbf{x}\in\mathcal{C}^{(M)}$,
$\bm \epsilon\in\{\pm\}^M$ and over subsets $J\subseteq\{1,\ldots,M\}$.
\end{lem}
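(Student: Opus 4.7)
The plan is to read the claimed ``iff'' as a pure rewriting: starting from the equivalent form \eqref{ReflqKZALT} of the boundary qKZ equations, I would push the operators $F_r(\mathbf{t};\xi_-)$, $G_r(\mathbf{t})$, and $K^{\xi_+}_r(t_r+\tfrac{\tau}{2})$ through the sum defining $f_M$, use Corollary \ref{qKZalt} to move them past the boundary $B$-operators, and finally substitute the expansion \eqref{key} from Corollary \ref{mathcalYcor} for the resulting boundary Bethe vectors. Since each of these steps is an equality, the equivalence between \eqref{ReflqKZALT} and the identity \eqref{lhs}$=$\eqref{rhs} will follow directly.

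First I would use mero-uniform convergence to justify applying any fixed linear operator $L(\mathbf{t})$ on $V$ term-by-term inside the sum: given the open neighbourhood $U_{\mathbf{t}_0}$ and the nonzero holomorphic $v_{\mathbf{t}_0}$ from the definition, shrinking $U_{\mathbf{t}_0}$ and multiplying $v_{\mathbf{t}_0}$ by a local scalar denominator of $L$ lets the operator pass through the sum while preserving absolute uniform convergence. Applying this with $L=K^{\xi_+}_r(t_r+\tfrac{\tau}{2})F_r(\mathbf{t};\xi_-)$ on the LHS of \eqref{ReflqKZALT} and $L=G_r(\mathbf{t})$ on the RHS, and then invoking the first respectively second identity of Corollary \ref{qKZalt}, I would rewrite the LHS summand as $K^{\xi_+}_r(t_r+\tfrac{\tau}{2})$ times the scalar $\prod_{i=1}^M\tfrac{b(x_i+t_r+\eta/2)b(x_i-t_r-\eta/2)}{b(x_i+t_r-\eta/2)b(x_i-t_r+\eta/2)}$ times $\overline{\mathcal{B}}^{\xi_-,(M)}_{s_r\cdots s_{N-1}}(\mathbf{x};e_r\mathbf{t})F_r(\mathbf{t};\xi_-)\Omega$, and the RHS summand as $\overline{\mathcal{B}}^{\xi_-,(M)}_{s_r\cdots s_{N-1}}(\mathbf{x};\mathbf{t}+\tau\mathbf{e}_r)G_r(\mathbf{t})\Omega$. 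The trailing $F_r(\mathbf{t};\xi_-)\Omega$ and $G_r(\mathbf{t})\Omega$ both collapse to $\Omega$, because every $R_{ij}(x)$ preserves $\mathbf{e}_+\otimes\mathbf{e}_+$ by \eqref{R} and $K^{\xi_-}_r(t_r)\mathbf{e}_+=\mathbf{e}_+$ by \eqref{Kdiag}.

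Finally I would substitute \eqref{key} for both remaining boundary Bethe vectors acting on $\Omega$. The decisive observation here is that $w:=s_r s_{r+1}\cdots s_{N-1}$ sends $N$ to $r$, so by construction $\widehat{B}_w(x;\mathbf{t})$ is independent of $t_r$ and therefore equals its values at $e_r\mathbf{t}$ and at $\mathbf{t}+\tau\mathbf{e}_r$; the $B_{w(N)}=B_r$ factors correctly retain the $t_r$ respectively $t_r+\tau$ dependence displayed in \eqref{lhs} and \eqref{rhs}, while the coefficient $\mathcal{Y}^{\xi_-,\bm\epsilon,J}_w$ inherits its $e_r\mathbf{t}$ and $\mathbf{t}+\tau\mathbf{e}_r$ arguments unchanged from \eqref{key}. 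The two resulting series then match \eqref{lhs} and \eqref{rhs} term-by-term, giving the equivalence. The only mildly delicate point is the justification of term-by-term operator action on the mero-uniform sum, but this is essentially built into the definition and requires no new idea; everything else is bookkeeping.
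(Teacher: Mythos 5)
Your proposal is correct and follows essentially the same route as the paper: apply Corollary \ref{qKZalt} to commute $F_r(\mathbf{t};\xi_-)$ and $G_r(\mathbf{t})$ past the boundary $B$-operators, use $F_r(\mathbf{t};\xi_-)\Omega=\Omega=G_r(\mathbf{t})\Omega$, substitute the expansion \eqref{key}, and exploit that $\widehat{B}_{s_r\cdots s_{N-1}}(x;\mathbf{t})$ is independent of $t_r$ because $s_r\cdots s_{N-1}(N)=r$. You in fact spell out two points the paper leaves implicit (the collapse of $F_r\Omega$ and $G_r\Omega$ to $\Omega$, and the justification of term-by-term operator action under mero-uniform convergence), so nothing is missing.
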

\begin{rema}\label{bbb}
In the current spin-$\frac{1}{2}$ setup we have
$B_r(x;t)B_r(y;t^\prime)\equiv 0$ as linear operators on $V$, cf. \eqref{B1}.
Hence the sum over subsets $J$ of $\{1,\ldots,M\}$
in both \eqref{lhs} and \eqref{rhs} only gives
a nonzero contribution if $\#J=M$ or $\#J=M-1$.
\end{rema}

\subsection{Highest weight contribution}
We consider in this subsection the contributions to \eqref{lhs} and \eqref{rhs}
for $J=\{1,\ldots,M\}$, which we denote by
$\mathcal{L}^{hw}_r(\mathbf{t};\xi_+,\xi_-)$ and
$\mathcal{R}^{hw}_r(\mathbf{t};\xi_+,\xi_-)$
respectively.
Since $K^{\xi_+}_r(x)\Omega=\Omega$
and $K^{\xi_+}_r(x)$ commutes with
$\widehat{B}_{s_r\cdots s_{N-1}}(y;\mathbf{t})$, we obtain by straightforward
computations,
\begin{equation*}
\begin{split}
\mathcal{L}^{hw}_r(\mathbf{t};\xi_+,\xi_-)&=\sum_{\mathbf{x},\bm \epsilon}
w^{(M)}(\mathbf{x};\mathbf{t};\xi_+,\xi_-)\\
&\times\Bigl(\prod_{i=1}^M\Bigl(\epsilon_ib(\xi_--\epsilon_ix_i-
\tfrac{\eta}{2})\prod_{\stackrel{s=1}{s\not=r}}^N\frac{b(\epsilon_ix_i-t_s-
\tfrac{\eta}{2})}{b(\epsilon_ix_i-t_s+\tfrac{\eta}{2})}\Bigr)\Bigr)\\
&\times
\Bigl(\prod_{1\leq i<j\leq M}\frac{b(\epsilon_ix_i+\epsilon_jx_j+\eta)}
{b(\epsilon_ix_i+\epsilon_jx_j)}\Bigr)
\Bigl(\prod_{i=1}^M\widehat{B}_{s_r\cdots s_{N-1}}(-\epsilon_ix_i-\tfrac{\eta}{2};
\mathbf{t})\Bigr)\Omega
\end{split}
\end{equation*}
and
\begin{equation*}
\begin{split}
\mathcal{R}^{hw}_r(\mathbf{t};\xi_+,\xi_-)&=\sum_{\mathbf{x},\bm\epsilon}
w^{(M)}(\mathbf{x};\mathbf{t}+\tau \mathbf{e}_r;\xi_+,\xi_-)
\Bigl(\prod_{i=1}^M\frac{b(\pm x_i-t_r-\tau-\tfrac{\eta}{2})}
{b(\pm x_i-t_r-\tau+\tfrac{\eta}{2})}\Bigr)\\
&\times\Bigl(\prod_{i=1}^M\Bigl(\epsilon_ib(\xi_--\epsilon_ix_i-
\tfrac{\eta}{2})\prod_{\stackrel{s=1}{s\not=r}}^N
\frac{b(\epsilon_ix_i-t_s-\tfrac{\eta}{2})}
{b(\epsilon_ix_i-t_s+\tfrac{\eta}{2})}\Bigr)\Bigr)\\
&\times
\Bigl(\prod_{1\leq i<j\leq M}\frac{b(\epsilon_ix_i+\epsilon_jx_j+\eta)}
{b(\epsilon_ix_i+\epsilon_jx_j)}\Bigr)
\Bigl(\prod_{i=1}^M\widehat{B}_{s_r\cdots s_{N-1}}(-\epsilon_ix_i-\tfrac{\eta}{2};
\mathbf{t})\Bigr)\Omega.
\end{split}
\end{equation*}

\begin{lem}\label{hwpart}
Suppose that
\[
w^{(M)}(\mathbf{x};\mathbf{t};\xi_+,\xi_-)=
\Bigl(\prod_{r=1}^N\prod_{i=1}^MF(t_r\pm x_i)\Bigr)
G_{\xi_+,\xi_-}(\mathbf{x})
\]
with $G_{\xi_+,\xi_-}(\mathbf{x})$ independent of $\mathbf{t}$. If
\[
F(x+\tau)=
\frac{b(x+\tau-\tfrac{\eta}{2})}
{b(x+\tau+\tfrac{\eta}{2})}F(x)
\]
then, provided mero-uniform convergence,
\begin{equation}\label{hwequal}
\mathcal{L}_r^{hw}(\mathbf{t};\xi_+,\xi_-)=
\mathcal{R}_r^{hw}(\mathbf{t};\xi_+,\xi_-)
\end{equation}
for $r=1,\ldots,N$.
\end{lem}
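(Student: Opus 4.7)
The plan is to compare the two sums $\mathcal{L}_r^{hw}$ and $\mathcal{R}_r^{hw}$ termwise, over the common summation range $\mathbf{x}\in\mathcal{C}^{(M)}$ and $\bm\epsilon\in\{\pm\}^M$. First I would dispose of the operator $K^{\xi_+}_r(t_r+\tfrac{\tau}{2})$ appearing in $\mathcal{L}_r^{hw}$: since $s_r\cdots s_{N-1}(N)=r$, the operator $\widehat{B}_{s_r\cdots s_{N-1}}(y;\mathbf{t})$ acts trivially on the $r$-th tensor leg and is independent of $t_r$, hence it commutes with $K^{\xi_+}_r(t_r+\tfrac{\tau}{2})$; because $K^{\xi_+}_r(x)$ is diagonal with upper-left entry $1$ we have $K^{\xi_+}_r(t_r+\tfrac{\tau}{2})\Omega=\Omega$, so the $K$-factor disappears from the left-hand side.

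After this reduction, $\mathcal{L}_r^{hw}$ and $\mathcal{R}_r^{hw}$ become sums of the same $\widehat{B}$-vectors
\[
\Bigl(\prod_{i=1}^M\widehat{B}_{s_r\cdots s_{N-1}}(-\epsilon_ix_i-\tfrac{\eta}{2};\mathbf{t})\Bigr)\Omega,
\]
which are again $t_r$-independent. The scalar coefficient multiplying each such vector contains the common factor
\[
\Bigl(\prod_{i=1}^M\epsilon_i b(\xi_--\epsilon_ix_i-\tfrac{\eta}{2})\prod_{\stackrel{s=1}{s\not=r}}^N\frac{b(\epsilon_ix_i-t_s-\tfrac{\eta}{2})}{b(\epsilon_ix_i-t_s+\tfrac{\eta}{2})}\Bigr)\prod_{1\leq i<j\leq M}\frac{b(\epsilon_ix_i+\epsilon_jx_j+\eta)}{b(\epsilon_ix_i+\epsilon_jx_j)},
\]
which is unchanged when $\mathbf{t}$ is shifted by $\tau\mathbf{e}_r$ (the shifted index is omitted from the product). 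Hence, invoking mero-uniform convergence to justify term-by-term identification, \eqref{hwequal} reduces to the scalar identity
\[
w^{(M)}(\mathbf{x};\mathbf{t};\xi_+,\xi_-)=w^{(M)}(\mathbf{x};\mathbf{t}+\tau\mathbf{e}_r;\xi_+,\xi_-)\prod_{i=1}^M\frac{b(x_i-t_r-\tau-\tfrac{\eta}{2})\,b(-x_i-t_r-\tau-\tfrac{\eta}{2})}{b(x_i-t_r-\tau+\tfrac{\eta}{2})\,b(-x_i-t_r-\tau+\tfrac{\eta}{2})}.
\]

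Finally I would verify this identity directly from the assumed factorized form of $w^{(M)}$. Only the factors $\prod_{i=1}^M F(t_r\pm x_i)$ are affected by the $\tau$-shift in the $r$-th variable, so the ratio $w^{(M)}(\mathbf{x};\mathbf{t}+\tau\mathbf{e}_r;\xi_+,\xi_-)/w^{(M)}(\mathbf{x};\mathbf{t};\xi_+,\xi_-)$ equals $\prod_{i=1}^M F(t_r+\tau\pm x_i)/F(t_r\pm x_i)$. Using the functional equation for $F$, this ratio becomes
\[
\prod_{i=1}^M\frac{b(t_r+\tau+x_i-\tfrac{\eta}{2})\,b(t_r+\tau-x_i-\tfrac{\eta}{2})}{b(t_r+\tau+x_i+\tfrac{\eta}{2})\,b(t_r+\tau-x_i+\tfrac{\eta}{2})},
\]
and since $b$ is odd, this coincides with the inverse of the $b$-ratio above. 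This completes the match, so \eqref{hwequal} holds.

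I do not foresee a serious obstacle; the only point requiring care is the bookkeeping that ensures all $t_r$-dependence in the non-$F$ part of the weight and in the $\mathcal{Y}$-coefficient either cancels or is accounted for by the explicit prefactor in $\mathcal{R}_r^{hw}$. The fact that $\widehat{B}_{s_r\cdots s_{N-1}}$ is independent of $t_r$ is crucial, as it is what allows a naive termwise comparison.
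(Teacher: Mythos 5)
Your proposal is correct and follows essentially the same route as the paper: after removing the $K$-matrix via $K^{\xi_+}_r(x)\Omega=\Omega$ and its commutation with the $t_r$-independent operators $\widehat{B}_{s_r\cdots s_{N-1}}$, the paper likewise observes that the summands agree up to their only $t_r$-dependent parts and reduces the lemma to exactly the scalar shift identity you display, which it then verifies by the same direct computation from the functional equation for $F$ and the oddness of $b$.
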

\begin{proof}
The summands of $\mathcal{L}_r^{hw}(\mathbf{t};\xi_+,\xi_-)$ and \mbox{$\mathcal{R}_r^{hw}(\mathbf{t};\xi_+,\xi_-)$} are the same except for the first line, which is also the only part depending on $t_r$.
Hence \eqref{hwequal} is valid if for all $\mathbf{x}\in\mathcal{C}^{(M)}$ and all $\bm\epsilon\in\{\pm\}^M$,
\begin{equation}\label{stept}
w^{(M)}(\mathbf{x};\mathbf{t}+\tau \mathbf{e}_r;\xi_+,\xi_-) = \Bigl(\prod_{i=1}^M\frac{b(\pm x_i-t_r-\tau+\tfrac{\eta}{2})}{b(\pm x_i-t_r-\tau-\tfrac{\eta}{2})}\Bigr)w^{(M)}(\mathbf{x};\mathbf{t};\xi_+,\xi_-).
\end{equation}
The result now follows by a direct computation.
\end{proof}

\subsection{The remaining term}
We assume that
\[
f_M(\mathbf{t})=\sum_{\mathbf{x}\in\mathbf{x}_0+\tau\Z^M}
w^{(M)}(\mathbf{x};\mathbf{t};\xi_+,\xi_-)
\overline{\mathcal{B}}^{\xi_-,(M)}(\mathbf{x};\mathbf{t})\Omega
\]
with the sum converging mero-uniformly in $\mathbf{t}\in\C^N$
and with weight function of the form
\begin{equation}\label{form}
w^{(M)}(\mathbf{x};\mathbf{t};\xi_+,\xi_-)=
\Bigl(\prod_{r=1}^N\prod_{i=1}^MF(t_r\pm x_i)\Bigr)
G_{\xi_+,\xi_-}(\mathbf{x})
\end{equation}
with $G_{\xi_+,\xi_-}(\mathbf{x})$ independent of $\mathbf{t}$ and with $F$
satisfying
\begin{equation}\label{form2}
F(x+\tau)=
\frac{b(x+\tau-\tfrac{\eta}{2})}
{b(x+\tau+\tfrac{\eta}{2})}F(x).
\end{equation}
Since the $\xi_\pm$ are fixed throughout this subsection, we will
suppress $\xi_\pm$ from the notations; in particular, we
write $w^{(M)}(\mathbf{x};\mathbf{t})$ for
$w^{(M)}(\mathbf{x};\mathbf{t};\xi_+,\xi_-)$.

By Lemma \ref{hwpart}, Lemma \ref{aaa} and Remark \ref{bbb},
$f_M(\mathbf{t})$ satisfies the boundary quantum KZ equations
\eqref{ReflqKZ} if \eqref{lhs} equals \eqref{rhs} with the sum
over $J$ running over subsets of $\{1,\ldots,M\}$ of cardinality
$M-1$. This is satisfied if for $1\leq j\leq M$ and $1\leq r\leq N$,
\begin{equation}\label{lhs1}
\begin{split}
I_{j,r}(t_r):=\frac{b(\xi_+-t_r-\tfrac{\tau}{2})}
{b(\xi_++t_r+\tfrac{\tau}{2})}&
\sum_{x_j\in x_{0,j}+\tau\Z}\sum_{\epsilon_j\in\{\pm\}}
\frac{w^{(M)}(\mathbf{x};\mathbf{t})}{b(-\epsilon_jx_j+t_r+
\tfrac{\eta}{2})}\\
\times
\Bigl(\prod_{i=1}^M&\frac{b(x_i+t_r+\tfrac{\eta}{2})b(x_i-t_r-\tfrac{\eta}{2})}
{b(x_i+t_r-\tfrac{\eta}{2})b(x_i-t_r+\tfrac{\eta}{2})}\Bigr)
\mathcal{Y}_{s_r\cdots s_{N-1}}^{\xi_-,\bm \epsilon,
\{1,\ldots,M\}\setminus\{j\}}(\mathbf{x};e_r\mathbf{t})
\end{split}
\end{equation}
equals
\begin{equation}\label{rhs1}
J_{j,r}(t_r)
:=\sum_{x_j\in x_{0,j}+\tau\Z}\sum_{\epsilon_j\in\{\pm\}}
\frac{w^{(M)}(\mathbf{x};\mathbf{t}+\tau \mathbf{e}_r)}
{b(-\epsilon_jx_j-t_r-\tau+\tfrac{\eta}{2})}
\mathcal{Y}_{s_r\cdots s_{N-1}}^{\xi_-,\bm{\epsilon},\{1,\ldots,M\}\setminus\{j\}}
(\mathbf{x};\mathbf{t}+\tau \mathbf{e}_r)
\end{equation}
for fixed
\[
\bm \epsilon_{\widehat{\jmath}}:=
(\epsilon_1,\ldots,\epsilon_{j-1},\epsilon_{j+1},\ldots,\epsilon_M)\in
\{\pm\}^{M-1},
\]
fixed $\mathbf{t}_{\widehat{r}}:=(t_1,\ldots,t_{r-1},t_{r+1},\ldots,t_N)$ (generic) and fixed
\[
\mathbf{x}_{\widehat{\jmath}}\in
(x_{0,1},\ldots,x_{0,j-1},x_{0,j+1},\dots,x_{0,M})+
\tau\Z^{M-1},
\]
where we write $\mathbf{x}_{\widehat{\jmath}}:=(x_1,\ldots,x_{j-1},x_{j+1},\ldots,x_M)$.
Note that this relies on the specific form of the $K$-matrix, in particular we have used the fact that
\[
K^\xi(x)\mathbf{e}_-=\frac{b(\xi-x)}{b(\xi+x)}\mathbf{e}_-.
\]
The expressions $I_{1,r}(t_r)$ and $J_{1,r}(t_r)$
for $M=1$ should be compared with
the left and right hand side of \eqref{special000} respectively.

We fix now $r\in\{1,\ldots,N\}$ and $j\in\{1,\ldots,M\}$, as well as
$\bm \epsilon_{\widehat{\jmath}}$, generic $\mathbf{t}_{\widehat{r}}$ and
$\mathbf{x}_{\widehat{\jmath}}$. When writing $w^{(M)}(\mathbf{x};\mathbf{t})$
we view it as function of $(x_j,t_r)$, with the remaining coordinates
being $\mathbf{x}_{\widehat{\jmath}}$ and $\mathbf{t}_{\widehat{r}}$.

Substituting \eqref{mathcalY} and \eqref{Y} and using \eqref{stept}
we get after straightforward simplifications of the summands that
\begin{equation*}
\begin{split}
I_{j,r}(t_r)&=
\frac{b(\xi_+-t_r-\tfrac{\tau}{2})}{b(\xi_++t_r+\tfrac{\tau}{2})}
\sum_{\epsilon_j\in\{\pm\}}
\sum_{x_j\in x_{0,j}+\tau\Z}
\frac{\epsilon_j m^{\epsilon_j}_{j,r}(\mathbf{x})}
{b(t_r-\epsilon_jx_j-\tfrac{\eta}{2})}
w^{(M)}(\mathbf{x};\mathbf{t}),\\
J_{j,r}(t_r)
&=\sum_{\epsilon_j\in\{\pm\}}
\sum_{x_j\in x_{0,j}+\tau\Z}
\frac{\epsilon_jm^{\epsilon_j}_{j,r}(\mathbf{x})}
{b(-t_r-\tau-\epsilon_jx_j-\tfrac{\eta}{2})}
w^{(M)}(\mathbf{x};\mathbf{t})
\end{split}
\end{equation*}
with the $t_r$-independent factor
\begin{equation*}
\begin{split}
m^{\epsilon_j}_{j,r}(\mathbf{x}):=&
b(\xi_--\epsilon_jx_j-\tfrac{\eta}{2}) \Bigl(\prod_{\stackrel{s=1}{s\not=r}}^N\frac{b(t_s-\epsilon_jx_j+\tfrac{\eta}{2})}
{b(t_s-\epsilon_jx_j-\tfrac{\eta}{2})}\Bigr) \\
\times&\Bigl(\prod_{\stackrel{i=1}{i\not=j}}^{M}\frac{b(\epsilon_jx_j\pm x_i+\eta)}
{b(\epsilon_jx_j\pm x_i)}\Bigr)\Bigl(
\prod_{\stackrel{1\leq i<i^\prime\leq M}{i\not=j\not=i^\prime}}
\frac{b(\epsilon_ix_i+\epsilon_{i^\prime}x_{i^\prime}+\eta)}
{b(\epsilon_ix_i+\epsilon_{i^\prime}x_{i^\prime})}
\Bigr)\\
\times&\prod_{\stackrel{i=1}{i\not=j}}^M
\Bigl(\epsilon_ib(\xi_--\epsilon_ix_i-\tfrac{\eta}{2})
\prod_{\stackrel{s=1}{s\not=r}}^N\frac{b(t_s-\epsilon_ix_i+\tfrac{\eta}{2})}
{b(t_s-\epsilon_ix_i-\tfrac{\eta}{2})}\Bigr).
\end{split}
\end{equation*}

We now resolve the summation over $\epsilon_j\in\{\pm\}$ for both
$I_{j,r}(t_r)$ and $J_{j,r}(t_r)$ using the boundary crossing symmetry identity \eqref{coeffcond4}.
For this we need the full ansatz on the weight function $w^{(M)}(\mathbf{x};\mathbf{t})$ as stated in Theorem
\ref{mr}; so the weight function $w^{(M)}(\mathbf{x};\mathbf{t})$ is assumed to be of the form \eqref{form} with $F$ satisfying \eqref{form2} and with
\[
G_{\xi_+,\xi_-}(\mathbf{x}):=\Bigl(\prod_{i=1}^Mg_{\xi_+,\xi_-}(x_i)\Bigr)
\prod_{1\leq i<i^\prime\leq M}h(x_i\pm x_{i^\prime})
\]
where $g_{\xi_+,\xi_-}$ and $h$ are satisfying the functional equations as stated in Theorem \ref{mr}. It implies that $G_{\xi_+,\xi_-}(\mathbf{x})$ satisfies
\begin{equation}\label{frG}
\begin{split}
G_{\xi_+,\xi_-}(\mathbf{x}-\tau \mathbf{e}_j)&=
\frac{b(\xi_-+x_j-\tfrac{\eta}{2})b(\xi_++x_j-\tfrac{\tau}{2}-
\tfrac{\eta}{2})}{b(\xi_--x_j+\tau-\tfrac{\eta}{2})
b(\xi_+-x_j+\tfrac{\tau}{2}-\tfrac{\eta}{2})}\\
&\qquad\times\Bigl(\prod_{\stackrel{i=1}{i\not=j}}^M
\frac{b(x_j\pm x_i-\tau)b(x_j\pm x_i-\eta)}
{b(x_j\pm x_i-\tau+\eta)b(x_j\pm x_i)}\Bigr)G_{\xi_+,\xi_-}(\mathbf{x}).
\end{split}
\end{equation}

Write
\begin{equation*}
U(x;t):=\frac{b(\xi_++x-\tfrac{\tau}{2}-\tfrac{\eta}{2})
b(t+x+\tfrac{\eta}{2})}{b(\xi_+-x+\tfrac{\tau}{2}-\tfrac{\eta}{2})
b(t-x+\tau+\tfrac{\eta}{2})}.
\end{equation*}
Then
\begin{equation}\label{plustominus}
\begin{split}
\sum_{x_j}
\frac{m^+_{j,r}(\mathbf{x})}
{b(t_r-x_j-\tfrac{\eta}{2})}
w^{(M)}(\mathbf{x};\mathbf{t})&=
\sum_{x_j}\frac{m^-_{j,r}(\mathbf{x})U(x_j;t_r)}
{b(t_r+x_j-\tfrac{\eta}{2})}w^{(M)}(\mathbf{x};\mathbf{t}),\\
\sum_{x_j}
\frac{m^+_{j,r}(\mathbf{x})}{b(-t_r-\tau-x_j-\tfrac{\eta}{2})}
w^{(M)}(\mathbf{x};\mathbf{t})&=
\sum_{x_j}
\frac{m^-_{j,r}(\mathbf{x})U(x_j;-t_r-\tau)}
{b(-t_r-\tau+x_j-\tfrac{\eta}{2})}w^{(M)}(\mathbf{x};\mathbf{t})
\end{split}
\end{equation}
with the sums over $x_j\in x_{0,j}+\tau\Z$
by replacing the summation variable $x_j$ in the left hand sides of
\eqref{plustominus} by $x_j-\tau$ and using the properties
\eqref{form}, \eqref{form2} and \eqref{frG}
for the weight function $w^{(M)}(\mathbf{x};\mathbf{t})$.
It follows from \eqref{plustominus} that
\begin{equation*}
\begin{split}
\sum_{\epsilon_j}\sum_{x_j}
\frac{\epsilon_jm^{\epsilon_j}_{j,r}(\mathbf{x})}
{b(t_r-\epsilon_jx_j-\tfrac{\eta}{2})}
w^{(M)}(\mathbf{x};\mathbf{t})&=
\sum_{x_j}\frac{m^-_{j,r}(\mathbf{x})V(x_j;t_r)}
{b(t_r+x_j-\tfrac{\eta}{2})}w^{(M)}(\mathbf{x};\mathbf{t}),\\
\sum_{\epsilon_j}\sum_{x_j}
\frac{\epsilon_jm^{\epsilon_j}_{j,r}(\mathbf{x})}
{b(-t_r-\tau-\epsilon_jx_j-\tfrac{\eta}{2})}
w^{(M)}(\mathbf{x};\mathbf{t})
&=
\sum_{x_j}
\frac{m^-_{j,r}(\mathbf{x})V(x_j;-t_r-\tau)}
{b(-t_r-\tau+x_j-\tfrac{\eta}{2})}w^{(M)}(\mathbf{x};\mathbf{t})
\end{split}
\end{equation*}
with
\begin{equation}\label{there}
V(x;t):=U(x;t)-1=\frac{b(\xi_++t+\tfrac{\tau}{2})b(2x-\tau)}
{b(\xi_+-x+\tfrac{\tau}{2}-\tfrac{\eta}{2})
b(t-x+\tau+\tfrac{\eta}{2})},
\end{equation}
where we have used the boundary crossing symmetry identity
\eqref{coeffcond4} for the last equality of \eqref{there}.
Since
\begin{equation*}
\begin{split}
\frac{b(\xi_+-t-\tfrac{\tau}{2})}{b(\xi_++t+\tfrac{\tau}{2})}
\frac{V(x;t)}{b(t+x-\tfrac{\eta}{2})}&=
\frac{b(\xi_+-t-\tfrac{\tau}{2})b(2x-\tau)}
{b(\xi_+-x+\tfrac{\tau}{2}-\tfrac{\eta}{2})
b(t+x-\tfrac{\eta}{2})b(t-x+\tau+\tfrac{\eta}{2})}\\
&=\frac{V(x;-t-\tau)}{b(-t-\tau+x-\tfrac{\eta}{2})}
\end{split}
\end{equation*}
it follows that
\begin{equation*}
\begin{split}
I_{j,r}(t_r)&=
\frac{b(\xi_+-t_r-\tfrac{\tau}{2})}
{b(\xi_++t_r+\tfrac{\tau}{2})}\sum_{x_j}
\frac{m^-_{j,r}(\mathbf{x})V(x_j;t_r)}
{b(t_r+x_j-\tfrac{\eta}{2})}w^{(M)}(\mathbf{x};\mathbf{t})\\
&=\sum_{x_j}\frac{m^-_{j,r}(\mathbf{x})V(x_j;-t_r-\tau)}
{b(-t_r-\tau+x_j-\tfrac{\eta}{2})}w^{(M)}(\mathbf{x};\mathbf{t})\\
&= J_{j,r}(t_r).
\end{split}
\end{equation*}
This completes the proof of Theorem \ref{mr}.

\section{Conclusions}

\noindent
Here we outline some open problems and some work in progress.\\

In this paper we obtained solutions to boundary qKZ equations in terms of bilateral series.
The domain of convergence depends on the boundary parameters $\xi_\pm$.
These formulae are expected to transform into integral solutions which are similar to the $\tau=0$ case (see Section \ref{solKZ}). 
This is a work in progress.\\

In this paper we studied the boundary qKZ equations for the $R$-matrices corresponding to $2$-dimensional irreducible representations of $U_q(\widehat{\mathfrak{sl}}_2)$ (the six-vertex model in statistical mechanics) with diagonal $K$-matrices. 
The general solution to the reflection equation for the six-vertex model is not diagonal. 
The Bethe ansatz for non-diagonal $K$-matrices involves a ''gauge transformation'' similar to the eight-vertex model, see \cite{FK} and references from this paper. 
We expect that for the non-diagonal case the formulae for solutions will have a similar structure as here, but the off-shell Bethe vectors will be more involved.\\

Integral formulae for solutions to the KZ equations can be derived as matrix elements of vertex operators realized in a Heisenberg algebra, see for example \cite{FeigFrenk,EFK}. 
In a similar way one can obtain integral formulae for solutions to the boundary qKZ equations \cite{JKKKM,JKKMW,W}.
It involves bosonization and boundary states.
Our solutions can be obtained in a similar way and such derivation  also lead to solutions expressed as integrals (not Jackson integrals) of Bethe vectors (for details of such presentations when $q=1$ see \cite{SV}).
This is a work in progress.\\

When the step size $\tau$ is a rational multiple of $\eta$ and the boundary parameters $\xi_\pm$ assume special values the boundary qKZ equations has a special class of ''rational'' solutions.
In the limit $\eta\to 0$ they correspond to solutions
of the boundary KZ equation with rational singularities at branching points. A
very special case of such solutions which is polynomial was found in
\cite{DFZJ}. Solutions describing correlation functions in the six-vertex model
with reflecting boundary conditions \cite{JKKKM} are also from this ''rational'' class.\\

The boundary qKZ equation for elliptic $R$-matrices describes correlation
functions in the eight-vertex model and in other solvable models of
statistical mechanics with elliptic parametrization of Boltzmann weights.
It is natural to expect that integral formulae for solutions will involve
a version of Bethe vectors constructed in \cite{FaTa} and in \cite{FK}.\\

The formulae for solutions to the boundary qKZ equations obtained in this paper
generalize to the higher spin representations of $U_q(\mathfrak{sl}_2)$ and
to highest weight representations of this algebra, see \cite{RSVhighspin}.\\

Solutions to the qKZ equation with $R$-matrices of $U_q(\widehat{\mathfrak{sl}}_n)$-type were
constructed, using off-shell Bethe vectors in \cite{TV}. However for other
quantum affine Lie algebras this has not been done yet. Such a project should
be relatively straightforward for classical Lie algebras where Bethe vectors
were constructed in \cite{R1,R2}. It would be also nice to do it for
dynamical $R$-matrices (SOS systems of Boltzmann weights in the language of
statistical mechanics). \\

The limit $\tau\to 0$ is similar to the WKB-type limit for KZ and qKZ equations of type A, see \cite{RV,TV}. 
In this limit the leading contribution to the solution comes from critical points of the integrand and the solution has the asymptotics $\exp(\frac{S}{\tau})(\psi+\mathcal{O}(\tau))$ as $\tau\rightarrow 0$.
Here $\psi$ is an eigenvector of the boundary transfer matrix corresponding to a solution of the Bethe equations (equations for critical points of the integrand, or in other words, the Yang-Yang action when $\tau\to 0$), and $S$ is the Yang-Yang action evaluated at this solution. 
A paper with a detailed discussion of this subject is in progress.\\

\section{Appendix. Solutions of the classical boundary KZ equations} \label{solKZ}

In this appendix we take the limit $\eta, \tau, \xi_{\pm} \to 0$ of the boundary qKZ equations
in such a way that $k:=\tau/\eta$ and $\delta_{\pm}:=\xi_{\pm}/\eta$ are constant. We obtain
classical boundary KZ equations. We construct integral solutions of the classical boundary KZ equations.

\subsection{The boundary KZ equations}

Define linear operators $r(x)$ and $\kappa^\delta(x)$ acting on $\C^2 \otimes \C^2$ and $\C^2$, respectively as the first order terms in the asymptotics of the $R$-and $K$-matrix as $\eta\rightarrow 0$,
\[ \qquad R(x;\eta) = 1 + \eta r(x) + \mathcal{O}(\eta^2) \quad \text{and} \quad K^{\eta \delta}(x) = \sigma^z + \eta \kappa^\delta(x) + \mathcal{O}(\eta^2), \quad \eta \to 0, \]
where $ \sigma^z = \left( \begin{smallmatrix} 1 & 0 \\ 0 & -1 \end{smallmatrix} \right)$.
The matrices $r(x)$ and $\kappa^\delta(x)$ have the interpretation of the classical counterparts of $R(x;\eta)$ and $K^\xi(x)$, respectively; it follows that
\[
r(x) =  \begin{pmatrix} 0 & 0 & 0 & 0 \\ 0 & -\frac{b'(x)}{b(x)} & \frac{b'(0)}{b(x)} & 0 \\ 0 & \frac{b'(0)}{b(x)} & -\frac{b'(x)}{b(x)} & 0 \\ 0 & 0 & 0 & 0 \end{pmatrix}
 \quad \text{and} \quad \kappa^\delta(x) = 2 \delta \frac{b'(x)}{b(x)} \check \sigma, \quad \text{where } \check \sigma = \begin{pmatrix} 0 & 0 \\ 0 & 1 \end{pmatrix}.
\]

It is easy to see that the transport operator $A_r(\mathbf{t};\xi_+,\xi_-;\tau)$ (see \eqref{Atauj})
featuring in the boundary qKZ equations \eqref{ReflqKZ} has the asymptotic expansion
\[ A_r(\mathbf{t};\eta \delta_+,\eta \delta_-;\eta k) = 1+ \eta a_r^\Delta(\mathbf t) + \mathcal{O}(\eta^2), \qquad \eta \to 0, \]
where $ \Delta = \delta_+ + \delta_-$ and $a_r^\Delta(\mathbf{t})$ (independent of $k$)
is given by
\begin{gather}
\label{adefinition} a_r^\Delta(\mathbf t) = -\kappa_r^\Delta(t_r)+
\sum_{s \ne r} \Theta_{rs}(t_r,t_s), \\[3mm]
\label{Thetadefn} \begin{aligned} \Theta(x,y) &=  r(x-y) + (\sigma^z \otimes 1)  r(x+y) (\sigma^z \otimes 1) \\
&= \frac{2b'(x)}{b(x \pm y)} \begin{pmatrix} 0 & 0 & 0 & 0 \\ 0 & -b(x) & b(y) & 0 \\ 0 & b(y) & -b(x) & 0 \\ 0 & 0 & 0 & 0 \end{pmatrix}. \end{aligned}
 \end{gather}
Note that $\Theta(x,y)$ is $P$-symmetric and that $\partial_y\Theta(x,y)$ is symmetric in $x$ and $y$,
since
\begin{equation*}
\partial_y\Theta(x,y)=\frac{2b^\prime(x)b^\prime(y)}{b(x\pm y)^2}
\left(\begin{matrix} 0 & 0 & 0 & 0\\
0 & -2b(x)b(y) & b(x)^2+b(y)^2 & 0\\
0 & b(x)^2+b(y)^2 & -2b(x)b(y) & 0\\
0 & 0 & 0 & 0\end{matrix}\right).
\end{equation*}
This follows from \eqref{fundamentalb} and its direct consequence
\begin{equation} \label{bderivative}
b(x)b'(y) + b'(x)b(y) = b'(0)b(x+y).
\end{equation}

In the formal limit $\eta\rightarrow 0$ the boundary quantum KZ equations \eqref{ReflqKZ}
turn into the \emph{(classical) boundary KZ equations}
\begin{equation} \label{KZ}
\qquad \qquad k\bigl(\partial_{t_r} f_\mathrm{cl}\bigr)(\mathbf{t}) = a_r^\Delta(\mathbf t) f_\mathrm{cl}(\mathbf t), \qquad r=1,\ldots,N
\end{equation}
for $\bigl(\mathbb{C}^2\bigr)^{\otimes N}$-valued local smooth functions $f_{cl}$ in $N$ variables,
where $\partial_t:=\frac{\partial}{\partial t}$. The equations \eqref{KZ} are the KZ equations for the Wess-Zumino-Witten conformal
field theory with conformal boundary conditions and boundary operator at $t=0$, cf., e.g., \cite{Car}.

The integrability of the boundary quantum KZ equations implies that \eqref{KZ} for
$k\in\mathbb{C}^\times$ define a flat connection, i.e.
\begin{equation}\label{flat}
\begin{split}
&\lbrack a_r^\Delta(\mathbf{t}), a_s^\Delta(\mathbf{t})\rbrack=0,\\
&\bigl(\partial_{t_r}a_s^\Delta\bigr)(\mathbf{t})=\bigl(\partial_{t_s}a_r^\Delta\bigr)(\mathbf{t})
\end{split}
\end{equation}
for $1\leq r,s\leq N$. In fact, the first line of \eqref{flat} follows from comparing the $\mathcal{O}(\eta^2)$-terms of the consistency equations \eqref{consistencyA} of the transport matrices for
$(\xi_+,\xi_-,\tau)=(\eta\delta_+,\eta\delta_-,0)$ as $\eta\rightarrow 0$, while the
second line of \eqref{flat} is equivalent to the symmetry of
$\partial_y\Theta(x,y)$ in $x$ and $y$. The flatness of \eqref{KZ} can also be checked
using classical Yang-Baxter and reflection equations.

\subsection{Integral solutions to the boundary KZ equations}

We formulate the main result of this appendix in this subsection, which is a description of explicit
integral solutions of the classical boundary KZ equation \eqref{KZ}. These are the formal
classical limits of the bilateral series solutions of the associated quantum KZ equations (see
Theorem \ref{mr}).

To simplify the presentation we take $b(x)$ to be $b(x)=x$ (case XXX) or $b(x)=\sinh(x)$ (XXZ),
and we will fix branch cut $\mathcal{L}_+\subset \mathbb{C}$ for the associated multi-valued function $b(x)^c$ ($c\in\mathbb{C}\setminus\mathbb{Z}$)
to be
\begin{equation*}
\mathcal{L}_+:=
\begin{cases}
\mathbb{R}_{\leq 0} & \textup{ if } b(x)=x,\\
\bigl(\mathbb{R}_{\leq 0}+2\mathbb{Z}\pi\sqrt{-1}\bigr)\cup\bigl(\mathbb{R}_{\geq 0}+
(2\mathbb{Z}+1)\pi\sqrt{-1})\qquad & \textup{ if }b(x)=\sinh(x).
\end{cases}
\end{equation*}
In the remainder of the appendix we take $b(x)^c$ to be the univalued function
$e^{\textup{Log}(b(x))c}$ on
$x\in\mathbb{C}\setminus\mathcal{L}_+$, with $\textup{Log}$ the principal logarithm.
Write
\begin{equation*}
\mathcal{L}:=
\begin{cases}
\mathbb{R} & \textup{ if } b(x)=x,\\
\mathbb{R}+\pi\sqrt{-1}\mathbb{Z} \qquad & \textup{ if } b(x)=\sinh(x)
\end{cases}
\end{equation*}
for the extension of the cut $\mathcal{L}_+$.
The results described below easily extend to the rescaled versions of $b(x)$ and to other choices of branch cuts.

To formulate the main result of the appendix we need to construct the classical analogues of the off-shell Bethe vectors
$\mathcal{B}^{\xi_-,(M)}(\mathbf{x};\mathbf{t})\Omega$ first.
Define the linear operator
\begin{equation*}
\beta(x;\mathbf{t})=2b^\prime(x)\sum_{r=1}^N
\frac{b(t_r)}{b(x\pm t_r)}\sigma_r^-,\qquad \sigma^-:=\begin{pmatrix} 0 & 0 \\ 1 & 0 \end{pmatrix}
\end{equation*}
on $\bigl(\mathbb{C}^2\bigr)^{\otimes N}$. Since $\lbrack\beta(x;\mathbf{t}),
\beta(y;\mathbf{t})\rbrack=0$ we can write for $M\geq 1$,
\[
\beta^{(M)}(\mathbf{x};\mathbf{t}):=\prod_{i=1}^M\beta(x_i;\mathbf{t}).
\]
Since $\bigl(\sigma^-\bigr)^2=0$ we have the explicit expression
\begin{equation}
\beta^{(M)}(\mathbf x;\mathbf t) = 2^M  \sum_{w \in S_M} \sum_{\mathbf{m}\in \mathcal{P}^M_N} \Biggl( \prod_{i=1}^M \frac{b^\prime(x_{w(i)})b(t_{m_i})}
{b(x_{w(i)}\pm t_{m_i})} \Biggr) \sigma^-_{\mathbf{m}}
\label{betahigherrank}
\end{equation}
where $\mathbf{m}=\{m_i\}_{i=1}^M$ runs through the set $\mathcal{P}^M_N$
of subsets of $\{1,\ldots,N\}$ of cardinality $M$ and
$\sigma^-_{\mathbf{m}} = \prod_{i=1}^M \sigma^-_{m_i}$. The $\beta^{(M)}(\mathbf{x};\mathbf{t})\Omega$ are the classical analogues of the off-shell Bethe vectors, in the sense
that
\[\overline{\mathcal{B}}^{\eta\delta_-,(M)}(\mathbf{x};\mathbf{t})=
\eta^M\Bigl(\prod_{i=1}^Mb(-x_i)\Bigr)
\beta^{(M)}(\mathbf{x};\mathbf{t})+\mathcal{O}(\eta^{M+1}),\qquad
\eta\rightarrow 0.
\]

Define
\[
g_{\Delta,\mathrm{cl}}(x):=b(x)^{\frac{2}{k}(1-\Delta)-1},\qquad
h_{\mathrm{cl}}(x):=b(x)^{\frac{2}{k}},\qquad F_{\mathrm{cl}}(x):=b(x)^{-\frac{1}{k}}.
\]
They satisfy the differential equations
\begin{equation}\label{classicaldiffeqs}
k\frac{g_{\Delta,\mathrm{cl}}^\prime(x)}{g_{\Delta,\mathrm{cl}}(x)}=(2(1-\Delta)-k)\frac{b^\prime(x)}{b(x)},
\quad
k\frac{h_{\mathrm{cl}}^\prime(x)}{h_{\mathrm{cl}}(x)}=2\frac{b^\prime(x)}{b(x)},\quad
k\frac{F_{\mathrm{cl}}^\prime(x)}{F_{\mathrm{cl}}(x)}=-\frac{b^\prime(x)}{b(x)},
\end{equation}
which are the formal classical analogues as $\eta\rightarrow 0$ of the difference equations with step size $\tau=\eta k$
satisfied by the functions $g_{\eta\delta_+,\eta\delta_-}$, $h$ and $F$ occurring as factors in the weight function of the bilateral series solutions of the boundary quantum KZ equation (see Theorem \ref{mr}). Set
\begin{equation*}
\begin{split}
w_{\mathrm{cl}}^{(M)}(\mathbf{x};\mathbf{t};\Delta)&:=G_{\Delta,\mathrm{cl}}(\mathbf{x})\prod_{i=1}^M\prod_{r=1}^N
F_{\mathrm{cl}}(t_r\pm x_i),\\
G_{\Delta,\mathrm{cl}}(\mathbf{x})&:=\Bigl(\prod_{i=1}^Mb(x_i)g_{\Delta,\mathrm{cl}}(x_i)\Bigr)
\prod_{1\leq i<j\leq M}h_{\mathrm{cl}}(x_i\pm x_j).
\end{split}
\end{equation*}

\begin{thm} \label{KZsolution}
Let $k\in\mathbb{C}^\times$ and $\Delta\in\mathbb{C}$ with $\Re(k)\leq 0$ and
$\Re((1-\Delta-N)/k)<\frac{1}{2}$. Fix $\mathbf{x}_0\in\mathbb{C}^M$ such that
$x_{0,i},x_{0,i}\pm x_{j,0}\notin\mathcal{L}$ for $1\leq i\not= j\leq M$. Then
\[
f_{M,\mathrm{cl}}(\mathbf{t}):=\int_{\mathbf{x}_0+\mathbb{R}^M}w_{\mathrm{cl}}^{(M)}(\mathbf{x};
\mathbf{t};\Delta)\beta^{(M)}(\mathbf{x};\mathbf{t})\Omega\,d^M\mathbf{x}
\]
defines an analytic $\bigl(\mathbb{C}^{2}\bigr)^{\otimes N}$-valued function on the
domain
\begin{equation}\label{domainKZ}
\{\mathbf{t}\in\mathbb{C}^N \,\, | \,\,
t_r\notin\pm x_{0,i}+\mathcal{L}\quad \forall\, r,i \}
\end{equation}
satisfying the boundary KZ equations \eqref{KZ}.
\end{thm}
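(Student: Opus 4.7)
The plan is to mirror the proof of Theorem \ref{mr} in the classical setting: bilateral sums over $\mathbf{x}_0+\tau\mathbb{Z}^M$ get replaced by integrals over $\mathbf{x}_0+\mathbb{R}^M$, the functional equations for $g_{\xi_+,\xi_-}$, $h$, $F$ become the differential equations \eqref{classicaldiffeqs}, and the shift $x_j\mapsto x_j-\tau$ used to identify $I_{j,r}(t_r)$ with $J_{j,r}(t_r)$ in the proof of Theorem \ref{mr} is replaced by integration by parts along $\mathbf{x}_0+\mathbb{R}^M$. The verification splits into two independent tasks: first, establishing convergence of the integral together with analyticity of $f_{M,\mathrm{cl}}$ on the domain \eqref{domainKZ}, and second, checking that $f_{M,\mathrm{cl}}$ solves \eqref{KZ}.

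For the first task I would perform a direct asymptotic analysis of the integrand along the real contour (or along the union of horizontal lines $\mathbb{R}+\pi\sqrt{-1}\mathbb{Z}$ in the XXZ case). The explicit power-like behaviour of $b(x_i)^{(2/k)(1-\Delta)}$, $b(x_i\pm x_j)^{2/k}$, $b(t_r\pm x_i)^{-1/k}$, together with the $\mathcal{O}(b(x_i)^{-2})$ decay of $\beta^{(M)}(\mathbf{x};\mathbf{t})\Omega$ supplied by \eqref{betahigherrank}, yields a dominant contribution of order $b(x_i)^{(2/k)(1-\Delta-N)-2}$ per variable (up to factors bounded on the contour). The hypotheses $\Re(k)\leq 0$ and $\Re((1-\Delta-N)/k)<\tfrac{1}{2}$ then ensure absolute integrability, locally uniformly on the domain \eqref{domainKZ}, so that joint analyticity of the integrand in $\mathbf{t}$ carries over to $f_{M,\mathrm{cl}}$. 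The choice of branch cut $\mathcal{L}$ keeps the integrand single-valued on the contour.

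For the second task I would differentiate under the integral sign. Applying $k\partial_{t_r}$ to $w_{\mathrm{cl}}^{(M)}(\mathbf{x};\mathbf{t};\Delta)$ produces, via \eqref{classicaldiffeqs}, a sum of rational factors $-b^\prime(t_r\pm x_i)/b(t_r\pm x_i)$, while $\partial_{t_r}$ applied to $\beta^{(M)}(\mathbf{x};\mathbf{t})\Omega$ is read off directly from \eqref{betahigherrank}. Separately one computes $a_r^\Delta(\mathbf{t})\beta^{(M)}(\mathbf{x};\mathbf{t})\Omega$ using \eqref{adefinition}--\eqref{Thetadefn}; this reduces to a rational identity in $\mathbf{x},\mathbf{t}$ because $\Theta(x,y)$ annihilates $\mathbf{e}_+^{\otimes 2}$ and acts in a controlled way on vectors carrying a single $\sigma^-$.

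The main obstacle will be to show that the difference
\begin{equation*}
k\partial_{t_r}\bigl(w_{\mathrm{cl}}^{(M)}\beta^{(M)}(\mathbf{x};\mathbf{t})\Omega\bigr)-a_r^\Delta(\mathbf{t})\,w_{\mathrm{cl}}^{(M)}\beta^{(M)}(\mathbf{x};\mathbf{t})\Omega
\end{equation*}
is a total $\mathbf{x}$-divergence $\sum_{j=1}^M\partial_{x_j}\Psi_j(\mathbf{x};\mathbf{t})$ whose components decay at the ends of the contour, so that the resulting boundary terms vanish. This is the classical counterpart of the shift-cancellation $I_{j,r}(t_r)=J_{j,r}(t_r)$ in the proof of Theorem \ref{mr}, and the two algebraic inputs driving the computation are the same as there, namely the boundary crossing symmetry \eqref{coeffcond4} and the derivative identity \eqref{bderivative}. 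Organising the calculation by the subsets $J\subseteq\{1,\ldots,M\}$ appearing in the analogue of Corollary \ref{mathcalYcor}, the ``highest-weight'' piece $J=\{1,\ldots,M\}$ is handled by the classical version of Lemma \ref{hwpart}, which follows at once from the ODE for $F_{\mathrm{cl}}$, while the cardinality-$(M-1)$ pieces produce the total-derivative cancellation via integration by parts, mirroring the summation-variable shift in \eqref{plustominus}. Compatibility of the $N$ resulting first-order equations is automatic from the flatness \eqref{flat}.
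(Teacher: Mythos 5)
Your overall strategy---differentiate under the integral sign, use integration by parts along $\mathbf{x}_0+\mathbb{R}^M$ as the classical substitute for the shift $x_j\mapsto x_j-\tau$, and reduce everything to rational identities in $(\mathbf{x},\mathbf{t})$---is how the paper proceeds, and your convergence discussion (power counting per variable together with $\Re(k)\leq 0$ controlling the $h_{\mathrm{cl}}(x_i\pm x_j)$ factors and the decay of $\beta$ supplied by \eqref{betahigherrank}) matches the paper's brief treatment of analyticity. However, the scaffolding you propose to import from the quantum proof does not transfer: the paper notes explicitly that the classical analogue of Corollary \ref{qKZalt} is unavailable, so there is no reduction to a pair of expressions organised by subsets $J$ as in Lemma \ref{aaa}. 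Instead one expands $\beta^{(M)}(\mathbf{x};\mathbf{t})\Omega$ fully via \eqref{betahigherrank} over pairs $(w,\mathbf{m})\in S_M\times\mathcal{P}^M_N$ and compares, for each fixed $\mathbf{m}$, the coefficient of $\sigma^-_{\mathbf{m}}\Omega$ in $k\partial_{t_r}f_{M,\mathrm{cl}}$ after integration by parts (Lemma \ref{kpartialtrf}) with that in $a_r^\Delta(\mathbf{t})f_{M,\mathrm{cl}}$ (Lemma \ref{abeta}). Your dichotomy $|J|=M$ versus $|J|=M-1$ does correspond to $r\notin\mathbf{m}$ versus $r\in\mathbf{m}$, but $\sum_{s\neq r}\Theta_{rs}(t_r,t_s)$ mixes these sectors (it moves a $\sigma^-$ between legs $r$ and $s$), so the highest-weight sector is not disposed of by the ODE for $F_{\mathrm{cl}}$ alone; matching it already requires recombining the $\Theta_{rs}$-contributions via \eqref{fundamentalb}.

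The genuine gap sits in the step you yourself flag as the main obstacle. It is true that the difference you write down is a total $\mathbf{x}$-divergence, and the divergence is produced exactly where you expect: the identity $\partial_t\bigl(b'(x)b(t)/b(x\pm t)\bigr)=-\partial_x\bigl(b'(t)b(x)/b(x\pm t)\bigr)$, a consequence of \eqref{fundamentalb} and \eqref{bderivative}, converts $\partial_{t_r}\beta^{(M)}$ into an exact $x$-derivative which is then integrated by parts onto the weight function using Lemma \ref{wequations}. But after this one is left, for each $\mathbf{m}\ni r$, with a residual coefficient $\alpha^{(r)}_{\mathbf{m}}-\gamma^{(r)}_{\mathbf{m}}$ that must vanish, and this does not follow from \eqref{coeffcond4} and \eqref{bderivative} applied termwise. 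In the paper the residual $W^{(r)}_{\mathbf{m}}$ is first simplified using the partial-fraction consequences of \eqref{fundamentalb} (such as $b(y)^2-b(x)^2=b(y\pm x)$), and then vanishes only because, inside the sum over $w\in S_M$, the term indexed by $w$ cancels against the term indexed by $w\sigma_i$ with $\sigma_i$ the transposition $i\leftrightarrow h(\mathbf{m},r)$. This $S_M$-pairing is the essential mechanism for $M\geq 2$ (for $M=1$ the residual sum is empty) and has no counterpart in your outline; without it the argument does not close. (The appeal to flatness \eqref{flat} at the end is unnecessary: one verifies each of the $N$ equations separately.)
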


It is easy to check that $f_{M,\mathrm{cl}}(\mathbf{t})$ is well-defined and analytic on the domain
\eqref{domainKZ} using the explicit expression \eqref{betahigherrank} for the off-shell Bethe vector
(note that $|\beta(x+s;\mathbf{t})|=\mathcal{O}(s^{-2})$ as $s\rightarrow\pm\infty$
if $b(x)=x$, while $|\beta(x+s;\mathbf{t})|=\mathcal{O}(s^{-1})$ as $s\rightarrow\pm\infty$
if $b(x)=\sinh(x)$). Formally $f_{M,\mathrm{cl}}(\mathbf{t})$ is obtained from the bilateral series solution
$f_M(\mathbf{t})$ of the boundary quantum KZ equations (see Theorem \ref{mr})
by taking the limit $\eta\rightarrow 0$.

\subsection{Proof of Theorem \ref{KZsolution}}

It suffices to show that $f_{M,\mathrm{cl}}(\mathbf{t})$ satisfies \eqref{KZ}.
The proof for general $M$ deviates from the proof in the quantum case because the classical
analogue of Corollary \ref{qKZalt}  is not available. Instead we will use the fact that an
explicit expression of the off-shell Bethe vector $\beta^{(M)}(\mathbf{x};\mathbf{t})\Omega$
as an explicit linear combination  of the $\sigma_{\mathbf{m}}^-$ ($\mathbf{m}\in\mathcal{P}^M_N$) is available, cf. \eqref{betahigherrank}.
We divide the proof in several steps.

\begin{lem} \label{wequations}
For $r=1,\ldots,N$ and $i=1,\ldots,M$ we have
\begin{align*}
k \frac{\partial_{t_r}w^{(M)}_\mathrm{cl}(\mathbf x;\mathbf t;\Delta)}
{w^{(M)}_\mathrm{cl}(\mathbf x;\mathbf t;\Delta)} &= \sum_{j=1}^M \frac{2b(t_r)b'(t_r)}{b(x_j\pm t_r)}, \\
k \frac{\partial_{x_i} w^{(M)}_\mathrm{cl}(\mathbf x;\mathbf t;\Delta)}{w^{(M)}_\mathrm{cl}(\mathbf x;\mathbf t;\Delta)} &= 2 \frac{b'(x_i)}{b(x_i)} \Biggl( 1-\Delta+2\sum_{j \ne i} \frac{b(x_i)^2}{b(x_i \pm x_j)} - \sum_{s=1}^N \frac{b(x_i)^2}{b(x_i\pm t_s)}\Biggr).
\end{align*}
\end{lem}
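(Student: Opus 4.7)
The plan is to prove Lemma \ref{wequations} by direct logarithmic differentiation of the explicit product formula for $w^{(M)}_{\mathrm{cl}}(\mathbf{x};\mathbf{t};\Delta)$, combined with the scalar ODEs \eqref{classicaldiffeqs} and the fundamental identity \eqref{bderivative}. Throughout, one uses that $b$ is odd and $b^\prime$ is even.

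First I would handle the $t_r$ equation. Only the factors $F_{\mathrm{cl}}(t_r\pm x_i)$ for $i=1,\ldots,M$ depend on $t_r$, so applying $k F_{\mathrm{cl}}^\prime / F_{\mathrm{cl}} = -b^\prime/b$ gives
\[
k\,\partial_{t_r}\log w^{(M)}_{\mathrm{cl}} = -\sum_{i=1}^M \Bigl(\frac{b^\prime(t_r+x_i)}{b(t_r+x_i)} + \frac{b^\prime(t_r-x_i)}{b(t_r-x_i)}\Bigr).
\]
For each $i$, combining the two terms over a common denominator and applying \eqref{bderivative} with arguments $(t_r+x_i,\,t_r-x_i)$ gives the numerator $b^\prime(0)\,b(2t_r)$, while the denominator is $b(t_r+x_i)b(t_r-x_i) = -b(x_i\pm t_r)$. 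The special case of \eqref{bderivative} at $x=y=t_r$, namely $2b(t_r)b^\prime(t_r) = b^\prime(0)b(2t_r)$, then produces exactly the claimed right-hand side $\sum_j 2b(t_r)b^\prime(t_r)/b(x_j\pm t_r)$.

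Next I would handle the $x_i$ equation. The factors of $w^{(M)}_{\mathrm{cl}}$ depending on $x_i$ are $b(x_i)g_{\Delta,\mathrm{cl}}(x_i)$, $h_{\mathrm{cl}}(x_i\pm x_j)$ for each $j\neq i$, and $F_{\mathrm{cl}}(t_r\pm x_i)$ for each $r$. Using the three ODEs of \eqref{classicaldiffeqs}, the first factor contributes $(1+(2(1-\Delta)-k)/k)\,b^\prime(x_i)/b(x_i) = (2(1-\Delta)/k)\,b^\prime(x_i)/b(x_i)$. The $h_{\mathrm{cl}}$-factors and $F_{\mathrm{cl}}$-factors each contribute sums of the form $b^\prime(x_i+y)/b(x_i+y) + b^\prime(x_i-y)/b(x_i-y)$ (with $y=x_j$ or $y=t_r$), which by exactly the same application of \eqref{bderivative} as above collapse to $2b(x_i)b^\prime(x_i)/b(x_i\pm y)$. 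Assembling the three contributions and multiplying by $k$ yields precisely the stated formula for $k\,\partial_{x_i}\log w^{(M)}_{\mathrm{cl}}$.

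The only real subtlety is sign bookkeeping: when $j<i$, differentiating $h_{\mathrm{cl}}(x_j\pm x_i)$ with respect to $x_i$ introduces a minus sign from $\partial_{x_i}b(x_j-x_i)=-b^\prime(x_j-x_i)$, which is exactly compensated by $b(x_j-x_i)=-b(x_i-x_j)$ and $b^\prime(x_j-x_i)=b^\prime(x_i-x_j)$, producing the same $b^\prime(x_i-x_j)/b(x_i-x_j)$ as in the $j>i$ case. This justifies the symmetric $\sum_{j\neq i}$. No substantive obstacle is anticipated: once \eqref{bderivative} and the ODEs \eqref{classicaldiffeqs} are brought to bear, the lemma reduces to a clean bookkeeping exercise.
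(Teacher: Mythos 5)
Your proposal is correct and follows essentially the same route as the paper: logarithmic differentiation of the product formula for $w^{(M)}_{\mathrm{cl}}$ using the ODEs \eqref{classicaldiffeqs} together with the identity \eqref{bderivative} (applied at $(t\pm x)$ and at $x=y$ to get $2b(t)b'(t)=b'(0)b(2t)$), with the same sign bookkeeping via the oddness of $b$. The paper merely packages the intermediate steps as explicit formulas for $k\,\partial\log F_{\mathrm{cl}}(t\pm x)$ and $k\,\partial_{x_i}\log G_{\Delta,\mathrm{cl}}$, which you derive in identical fashion.
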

\begin{proof}
{}From \eqref{bderivative}
and \eqref{classicaldiffeqs} it follows
that $F_\mathrm{cl}$ and $G_{\Delta,\mathrm{cl}}$ satisfy
\begin{align*}
k \frac{\partial_t\bigl(F_\mathrm{cl}(t\pm x)\bigr)}{F_\mathrm{cl}(t \pm x)} &= \frac{2b(t)b'(t)}{b(x\pm t)},
\qquad k\frac{\partial_x\bigl(F_{\mathrm{cl}}(t\pm x)\bigr)}{F_{\mathrm{cl}}(t\pm x)} = -\frac{2b(x)b^\prime(x)}
{b(x\pm t)},\\
k \frac{\partial_{x_i} G_{\Delta,\mathrm{cl}}(\mathbf x)}{G_{\Delta,\mathrm{cl}}(\mathbf x)} &= 2 \frac{b'(x_i)}{b(x_i)} \Biggl( 1-\Delta+2\sum_{j \ne i} \frac{b(x_i)^2}{b(x_i \pm x_j)}\Biggr),
\end{align*}
from which the stated equations readily follow.
\end{proof}
For $r=1,\ldots,N$, introduce the set
\[
\mathcal{P}_M^N(r):=\{ \mathbf{m}\in\mathcal{P}_M^N\,\, | \,\, r\in\mathbf{m}\}.
\]
For $\mathbf{m}\in\mathcal{P}_M^N(r)$ we write $h(\mathbf{m},r)$ for the index $h\in\{1,\ldots,M\}$ such that
$m_h=r$. If $r$ is fixed and clear from context, we simply write $h(\mathbf{m})$.
\begin{lem} \label{kpartialtrf}
For $r=1,\ldots,N$, we have
\[
k\partial_{t_r}f_{M,\mathrm{cl}}(\mathbf{t})=
\frac{2^{M+1}b^\prime(t_r)}{b(t_r)}\sum_{\mathbf{m}\in\mathcal{P}_N^M}
\int_{\mathbf{x}_0+\mathbb{R}^M}w_{\mathrm{cl}}^{(M)}(\mathbf{x};\mathbf{t};\Delta)
\alpha_{\mathbf{m}}^{(r)}(\mathbf{x};\mathbf{t};\Delta)\sigma_{\mathbf{m}}^-\Omega\, d^M\mathbf{x}
\]
with
\[
\alpha_{\mathbf{m}}^{(r)}(\mathbf{x};\mathbf{t};\Delta):=
\sum_{w\in S_M}\Biggl(\prod_{i=1}^M\frac{b^{\prime}(x_{w(i)})b(t_{m_i})}
{b(x_{w(i)}\pm t_{m_i})}\Biggr)\Biggl(\sum_{j=1}^M\frac{b(t_r)^2}{b(x_j\pm t_r)}\Biggr)
\]
if $\mathbf{m}\in \mathcal{P}_N^M\setminus\mathcal{P}_N^M(r)$ and
\begin{equation*}
\begin{split}
\alpha_{\mathbf{m}}^{(r)}(\mathbf{x};\mathbf{t};\Delta):=
\sum_{w\in S_M}&\Biggl(\prod_{i=1}^M\frac{b^{\prime}(x_{w(i)})b(t_{m_i})}
{b(x_{w(i)}\pm t_{m_i})}\Biggr)\left\{ 1-\Delta+\sum_{j=1}^M\frac{b(t_r)^2}{b(x_j\pm t_r)}\right.\\
&\left.+2\sum_{j\not=w(h(\mathbf{m}))}\frac{b(x_{w(h(\mathbf{m}))})^2}
{b(x_{w(h(\mathbf{m}))}\pm x_j)}-
\sum_{s=1}^N\frac{b(x_{w(h(\mathbf{m}))})^2}{b(x_{w(h(\mathbf{m}))}\pm t_s)}\right\}
\end{split}
\end{equation*}
if $\mathbf{m}\in\mathcal{P}_N^M(r)$, where $h(\mathbf{m})=h(\mathbf{m},r)$.
\end{lem}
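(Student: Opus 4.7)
The plan is to bring $k\partial_{t_r}$ inside the integral (justified by the decay hypotheses on $k$ and $\Delta$), apply the product rule, and then use Lemma \ref{wequations} together with the explicit expansion \eqref{betahigherrank} of $\beta^{(M)}(\mathbf{x};\mathbf{t})\Omega$ in the basis $\{\sigma_{\mathbf{m}}^-\Omega\}_{\mathbf{m}\in\mathcal{P}_N^M}$. Writing $P_{\mathbf{m},w}(\mathbf{x};\mathbf{t}):=\prod_{i=1}^M \frac{b'(x_{w(i)})b(t_{m_i})}{b(x_{w(i)}\pm t_{m_i})}$ so that the coefficient of $\sigma_{\mathbf{m}}^-\Omega$ in $\beta^{(M)}(\mathbf{x};\mathbf{t})\Omega$ equals $2^M\sum_{w\in S_M}P_{\mathbf{m},w}$, two contributions arise from $k\partial_{t_r}(w_{\mathrm{cl}}^{(M)}P_{\mathbf{m},w})$: one from $\partial_{t_r}$ acting on $w_{\mathrm{cl}}^{(M)}$ (governed by Lemma \ref{wequations}), and one from $\partial_{t_r}$ acting on $P_{\mathbf{m},w}$.

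For $\mathbf{m}\notin\mathcal{P}_N^M(r)$ the factor $P_{\mathbf{m},w}$ does not depend on $t_r$, so only the first contribution survives and Lemma \ref{wequations} immediately reproduces $\alpha_{\mathbf{m}}^{(r)}$. For $\mathbf{m}\in\mathcal{P}_N^M(r)$ and $h:=h(\mathbf{m},r)$, the only $t_r$-dependent part of $P_{\mathbf{m},w}$ sits in the $i=h$ factor $\frac{b'(x_{w(h)})b(t_r)}{b(x_{w(h)}\pm t_r)}$, and the remaining factor is independent of both $t_r$ and $x_{w(h)}$. The key algebraic identity I intend to invoke is
\[
\partial_t\!\left[\frac{b'(x)b(t)}{b(x\pm t)}\right]=-\partial_x\!\left[\frac{b'(t)b(x)}{b(x\pm t)}\right],
\]
which after clearing denominators reduces to the symmetric identity $2b'(x)b'(t)=b'(0)\bigl[b'(x+t)+b'(x-t)\bigr]$; the latter is obtained by $x$-differentiating $2b(x)b'(t)=b'(0)\bigl[b(x+t)+b(x-t)\bigr]$, which in turn is a direct consequence of \eqref{bderivative} together with the parities of $b$ and $b'$.

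With that identity in hand, I would rewrite
\[
\partial_{t_r}P_{\mathbf{m},w}=-\partial_{x_{w(h)}}\!\left[\frac{b'(t_r)b(x_{w(h)})}{b'(x_{w(h)})b(t_r)}P_{\mathbf{m},w}\right]
\]
and integrate by parts in $x_{w(h)}$ against $w_{\mathrm{cl}}^{(M)}$, with the boundary contributions at $\pm\infty$ vanishing by the decay hypothesis $\Re((1-\Delta-N)/k)<\tfrac{1}{2}$. Applying the $x_{w(h)}$-derivative formula of Lemma \ref{wequations} to the resulting $\partial_{x_{w(h)}}w_{\mathrm{cl}}^{(M)}$ reproduces exactly the bracketed combination $1-\Delta+2\sum_{j\ne w(h)}\frac{b(x_{w(h)})^2}{b(x_{w(h)}\pm x_j)}-\sum_{s=1}^N\frac{b(x_{w(h)})^2}{b(x_{w(h)}\pm t_s)}$, and adding the $\sum_j\frac{b(t_r)^2}{b(x_j\pm t_r)}$ term from $\partial_{t_r}w_{\mathrm{cl}}^{(M)}$ assembles the bracket in $\alpha_{\mathbf{m}}^{(r)}$. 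The main hurdle will be the reciprocity identity above — a short manipulation of \eqref{bderivative} — and the justification of the integration by parts; the remaining bookkeeping of the prefactors $2^{M+1}b'(t_r)/b(t_r)$ and of the sums over $w\in S_M$ is routine.
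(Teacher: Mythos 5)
Your proposal is correct and follows essentially the same route as the paper's proof: differentiate under the integral, resolve $\partial_{t_r}w^{(M)}_{\mathrm{cl}}$ via Lemma \ref{wequations}, convert the $t_r$-derivative of the $i=h(\mathbf{m},r)$ factor of the Bethe-vector coefficient into an $x_{w(h)}$-derivative via the identity $\partial_t\bigl(b'(x)b(t)/b(x\pm t)\bigr)=-\partial_x\bigl(b'(t)b(x)/b(x\pm t)\bigr)$, integrate by parts, and apply the $x$-derivative formula of Lemma \ref{wequations}. Your derivation of the key reciprocity identity from \eqref{bderivative} and the parities of $b$, $b'$ is sound (the paper invokes the same identity, attributing it to \eqref{fundamentalb} and \eqref{bderivative}), and the bookkeeping of the prefactor $2^{M+1}b'(t_r)/b(t_r)$ works out as you describe.
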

\begin{proof}
In
\begin{equation*}
\begin{split}
\partial_{t_r}f_{M,\mathrm{cl}}(\mathbf{t})&=
\int_{\mathbf{x}_0+\mathbb{R}^M}\bigl(\partial_{t_r}w_{\mathrm{cl}}^{(M)}(\mathbf{x};\mathbf{t};\Delta)\bigr)
\beta(\mathbf{x};\mathbf{t})\Omega d^M\mathbf{x}\\
&+\int_{\mathbf{x}_0+\mathbb{R}^M}w_{\mathrm{cl}}^{(M)}(\mathbf{x};\mathbf{t};\Delta)
\partial_{t_r}\beta(\mathbf{x};\mathbf{t})\Omega d^M\mathbf{x}
\end{split}
\end{equation*}
the partial derivative occurring in the integrand of the first integral
can be resolved using Lemma \ref{wequations}. To resolve the second partial derivative we use
\[
\partial_t\Bigl(\frac{b^\prime(x)b(t)}{b(x\pm t)}\Bigr)=b^\prime(t)b^\prime(x)\Bigl(\frac{b(x)^2+b(t)^2}
{b(x\pm t)^2}\Bigr)=-\partial_x\Bigl(\frac{b^\prime(t)b(x)}{b(x\pm t)}\Bigr)
\]
(which follows from \eqref{fundamentalb} and \eqref{bderivative}) to rewrite
$\partial_{t_r}\beta^{(M)}(\mathbf{x};\mathbf{t})$ as
\[-2^M\sum_{w\in S_M}\sum_{\mathbf{m}\in\mathcal{P}_N^M(r)}
\partial_{x_{w(h(\mathbf{m}))}}\left(\frac{b(x_{w(h(\mathbf{m}))})b^\prime(t_r)}
{b^\prime(x_{w(h(\mathbf{m}))})b(t_r)}\prod_{i=1}^M
\frac{b^\prime(x_{w(i)})b(t_{m_i})}{b(x_{w(i)}\pm t_{m_i})}\right)\sigma_{\mathbf{m}}^-
\]
and subsequently use partial integration and Lemma \ref{wequations}.
It results in the desired formula for $k\partial_{t_r}f_{M,\mathrm{cl}}(\mathbf{t})$.
\end{proof}

The next lemma deals with the other side of the classical boundary KZ equations.
\begin{lem} \label{abeta}
For $r=1,\ldots,N$ we have
\[
a_r^\Delta(\mathbf{t})\beta^{(M)}(\mathbf{x};\mathbf{t})\Omega=
\frac{2^{M+1}b^\prime(t_r)}{b(t_r)}\sum_{\mathbf{m}\in\mathcal{P}_N^M}
\gamma_{\mathbf{m}}^{(r)}(\mathbf{x};\mathbf{t};\Delta)\sigma_{\mathbf{m}}^-\Omega
\]
with
\[
\gamma_{\mathbf{m}}^{(r)}(\mathbf{x};\mathbf{t};\Delta):=
\sum_{w\in S_M}\Biggl(\prod_{i=1}^M\frac{b^{\prime}(x_{w(i)})b(t_{m_i})}
{b(x_{w(i)}\pm t_{m_i})}\Biggr)\Biggl(\sum_{j=1}^M\frac{b(t_r)^2}{b(x_j\pm t_r)}\Biggr)
\]
if $\mathbf{m}\in\mathcal{P}_N^M\setminus\mathcal{P}_N^M(r)$ and
\[
\gamma_{\mathbf{m}}^{(r)}(\mathbf{x};\mathbf{t};\Delta):=
-\sum_{w\in S_M}\Biggl(\prod_{i=1}^M\frac{b^{\prime}(x_{w(i)})b(t_{m_i})}
{b(x_{w(i)}\pm t_{m_i})}\Biggr)\Bigl(\Delta+\sum_{s\in\mathbf{m}^c}\frac{b(x_{w(h(\mathbf{m}))})^2}
{b(x_{w(h(\mathbf{m}))}\pm t_s)}\Biggr)
\]
if $\mathbf{m}\in\mathcal{P}_N^M(r)$,
where $h(\mathbf{m})=h(\mathbf{m},r)$.
\end{lem}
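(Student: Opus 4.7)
The plan is to expand both sides in the basis $\{\sigma_{\mathbf{m}}^-\Omega : \mathbf{m}\in\mathcal{P}_N^M\}$ and match coefficients. From the explicit matrix forms in \eqref{adefinition} and \eqref{Thetadefn}, the diagonal operator $-\kappa_r^\Delta(t_r)$ sends $\sigma_{\mathbf{m}}^-\Omega$ to $-2\Delta\frac{b'(t_r)}{b(t_r)}\sigma_{\mathbf{m}}^-\Omega$ when $r\in\mathbf{m}$ and to $0$ otherwise, while $\Theta_{rs}(t_r,t_s)$ annihilates $\sigma_{\mathbf{m}}^-\Omega$ unless exactly one of $r,s$ lies in $\mathbf{m}$; in the mixed case it produces a diagonal piece $-\frac{2b'(t_r)b(t_r)}{b(t_r\pm t_s)}\sigma_{\mathbf{m}}^-\Omega$ together with a \emph{hopping} piece $\frac{2b'(t_r)b(t_s)}{b(t_r\pm t_s)}\sigma_{\mathbf{m}^\sharp}^-\Omega$, where $\mathbf{m}^\sharp$ is obtained from $\mathbf{m}$ by interchanging membership of $r$ and $s$.

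I would then substitute the explicit expression \eqref{betahigherrank} for $\beta^{(M)}(\mathbf{x};\mathbf{t})\Omega=\sum_{\mathbf{m}'}C(\mathbf{x},\mathbf{m}')\sigma_{\mathbf{m}'}^-\Omega$ and collect, for each target $\mathbf{m}$, all contributions to the coefficient of $\sigma_{\mathbf{m}}^-\Omega$: from $-\kappa_r^\Delta$ and the diagonal parts of the $\Theta_{rs}$'s acting on the $\mathbf{m}'=\mathbf{m}$ term, and from the hopping parts of the $\Theta_{rs}$'s acting on the $\mathbf{m}'=\mathbf{m}^\sharp$ terms (one for each index $s$ that can be swapped with $r$). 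The $\Delta$ piece matches the $-\Delta$ in $\gamma_{\mathbf{m}}^{(r)}$ exactly when $r\in\mathbf{m}$ and is absent when $r\notin\mathbf{m}$, in agreement with the claim. What remains is to show that the diagonal and hopping contributions combine into the sums over $s\in\mathbf{m}^c$ (when $r\in\mathbf{m}$) or over $j\in\{1,\ldots,M\}$ (when $r\notin\mathbf{m}$) appearing in $\gamma_{\mathbf{m}}^{(r)}$.

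After writing $C(\mathbf{x},\mathbf{m})$ as a sum over bijections $\pi:\{1,\ldots,M\}\to\mathbf{m}$ and singling out the index $i_0(\pi):=\pi^{-1}(r)$ in the case $r\in\mathbf{m}$ (respectively $i_0(\pi):=\pi^{-1}(s)$ in the case $r\notin\mathbf{m}$), both sides reduce, per $\pi$ and per relevant $s$, to the single scalar identity
\begin{equation*}
b(y)^2\,b(t_r\pm t_s)-b(t_r)^2\,b(y\pm t_s)+b(t_s)^2\,b(y\pm t_r)=0
\end{equation*}
with $y=x_{i_0(\pi)}$. Since $b(u\pm v)=b(u)^2-b(v)^2$, valid for both $b(x)=x$ and $b(x)=\sinh x$ (a direct consequence of the fundamental identity \eqref{fundamentalb}), the left-hand side rewrites as the cyclic sum
\[
b(y)^2\bigl(b(t_r)^2-b(t_s)^2\bigr)-b(t_r)^2\bigl(b(y)^2-b(t_s)^2\bigr)+b(t_s)^2\bigl(b(y)^2-b(t_r)^2\bigr),
\]
which telescopes to $0$. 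The main obstacle I anticipate is the combinatorial bookkeeping: tracking contributions coming from the different source configurations $\mathbf{m}^\sharp$ and performing the reindexing needed to recognize the bijection-indexed form of $\gamma_{\mathbf{m}}^{(r)}$. Once this reduction is in place, verification via the cyclic identity above is essentially a two-line computation.
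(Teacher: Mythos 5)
Your proposal is correct and follows essentially the same route as the paper: both expand $\beta^{(M)}(\mathbf{x};\mathbf{t})\Omega$ via \eqref{betahigherrank}, track the diagonal and hopping action of $\kappa_r^\Delta$ and $\Theta_{rs}$ on the vectors $\sigma_{\mathbf{m}}^-\Omega$, and collapse each diagonal-plus-hopping pair using consequences of \eqref{fundamentalb} (the paper packages this step as a single identity for $\Theta_{rs}$ applied to $\tfrac{b(t_r)}{b(x\pm t_r)}\sigma_r^-\Omega+\tfrac{b(t_s)}{b(x\pm t_s)}\sigma_s^-\Omega$, which encodes exactly your coefficient matching). The only nitpick is that the case $r\notin\mathbf{m}$ actually needs the companion identity $b(y\pm t_s)-b(y\pm t_r)=b(t_r\pm t_s)$ rather than your weighted cyclic sum (which handles the case $r\in\mathbf{m}$), but both are immediate from $b(u\pm v)=b(u)^2-b(v)^2$.
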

\begin{proof}
Let $s\not=r$. From \eqref{Thetadefn} it follows that
\begin{align*}
\Theta_{rs}(t_r,t_s) \sigma^-_r \Omega &= \frac{2b'(t_r)}{b(t_r \pm t_s)} \bigl(b(t_s) \sigma^-_s - b(t_r) \sigma^-_r\bigr) \Omega \\
\Theta_{rs}(t_r,t_s) \sigma^-_s \Omega &= \frac{2b'(t_r)}{b(t_r \pm t_s)} \bigl(b(t_s) \sigma^-_r - b(t_r) \sigma^-_s\bigr) \Omega,
\end{align*}
and hence by virtue of \eqref{fundamentalb} that
\begin{equation*}
\Theta_{rs}(t_r,t_s) \Bigl( \frac{b(t_r)}{b(x\pm t_r)}\sigma^-_r + \frac{b(t_s)}{b(x\pm t_s)}\sigma^-_s\Bigr) \Omega  =
\frac{2b'(t_r)\bigl(-b(x)^2 \sigma^-_r +b(t_r)b(t_s) \sigma^-_s \bigr)\Omega}{b(x\pm t_r)b(x\pm t_s)}.
\end{equation*}
Since $\Theta(x,y) \mathbf{e}_\pm \otimes \mathbf{e}_\pm = 0$ we conclude that
$\Theta_{rs}(t_r,t_s) \beta^{(M)}(\mathbf x;\mathbf t) \Omega$ equals
\begin{align*}
&2^{M+1} b'(t_r) \Biggl( \prod_{i=1}^M b'(x_i) \Biggr)\sum_{w \in S_M}
\sum_{\stackrel{\mathbf{m}\in \mathcal{P}^{M-1}_N:}{r,s\not\in\mathbf{m}}} \left\{\Biggl( \prod_{i=1}^{M-1} \frac{b(t_{m_i})}
{b(x_{w(i)}\pm t_{m_i})} \Biggr)\right.\\
&\left.\qquad\qquad\qquad\qquad\qquad\qquad\qquad\qquad\qquad\times
\frac{\bigl(b(t_r)b(t_s) \sigma^-_s-b(x_{w(M)})^2 \sigma^-_r\bigr)\sigma_{\mathbf{m}}^-\Omega}
{b(x_{w(M)}\pm t_r)b(x_{w(M)}\pm t_s)}\right\}\\
&=\frac{2^{M+1} b'(t_r)}{b(t_r)} \sum_{w \in S_M} \left\{
 \sum_{\stackrel{\mathbf{m} \in \mathcal{P}^M_N(s):}{r\not\in\mathbf{m}}}
 \Biggl( \prod_{i=1}^M \frac{b'(x_{w(i)})b(t_{m_i})}{b(x_{w(i)}\pm t_{m_i})} \Biggr)
 \frac{b(t_r)^2}{b(x_{w(h(\mathbf{m},s))}\pm t_r)}\sigma^-_{\mathbf{m}}\Omega \right. \\
& \left.\qquad\qquad\qquad\qquad\qquad
 -  \sum_{\stackrel{\mathbf{m}\in\mathcal{P}_N^M(r):}{s\not\in\mathbf{m}}}
 \Biggl( \prod_{i=1}^M \frac{b'(x_{w(i)})b(t_{m_i})}{b(x_{w(i)}\pm t_{m_i})} \Biggr)
 \frac{b(x_{w(h(\mathbf{m},r))})^2}{b(x_{w(h(\mathbf{m},r))}\pm t_s)}
 \sigma_{\mathbf{m}}^-\Omega
 \right\}.
 \end{align*}
Writing $\mathbf{m}^c:= \{1,\ldots,N\} \setminus \mathbf{m}$ for $\mathbf{m} \subseteq \{ 1,\ldots,N\}$ we obtain
\begin{equation} \label{abeta1}
\begin{aligned}
& \sum_{s \ne r} \Theta_{rs}(t_r,t_s) \beta^{(M)}(\mathbf x;\mathbf t) \Omega
= \frac{2^{M+1} b'(t_r)}{b(t_r)}   \\
&\,\,\times  \sum_{w \in S_M}\left\{
\sum_{\stackrel{\mathbf{m}\in \mathcal{P}^M_N:}
{r\not\in\mathbf{m}}}\Biggl( \prod_{i=1}^M \frac{b'(x_{w(i)})b(t_{m_i})}{b(x_{w(i)}\pm t_{m_i})} \Biggr)
\Biggl(\sum_{j=1}^M\frac{b(t_r)^2}{b(x_j\pm t_r)}\Biggr)\sigma_{\mathbf{m}}^-\Omega\right.\\
&\left.\quad\qquad-\sum_{\mathbf{m}\in\mathcal{P}_N^M(r)}
\Biggl( \prod_{i=1}^M \frac{b'(x_{w(i)})b(t_{m_i})}{b(x_{w(i)}\pm t_{m_i})} \Biggr)
\Biggl(\sum_{s\in\mathbf{m}^c}\frac{b(x_{w(h(\mathbf{m},r))})^2}{b(x_{w(h(\mathbf{m},r))}\pm t_s)}\Biggr)
\sigma_{\mathbf{m}}^-\Omega\right\}.
\end{aligned}
\end{equation}
Also, since $\check \sigma \sigma^- = \sigma^-$ and $\check\sigma_r\Omega= 0$, we readily obtain
\[-\kappa_r^\Delta(t_r)\beta^{(M)}(\mathbf x;\mathbf t) \Omega =-\frac{2^{M+1}b^\prime(t_r)}
{b(t_r)} \sum_{w \in S_M}  \sum_{\mathbf{m}\in \mathcal{P}^M_N(r) }
\Biggl( \prod_{i=1}^M \frac{b'(x_{w(i)})b(t_{m_i})}{b(x_{w(i)}\pm t_{m_i})} \Biggr)\Delta\sigma^-_{\mathbf{m}} \Omega. \]
Adding this to \eqref{abeta1} we obtain the lemma.
\end{proof}

\begin{proof}[Proof of Thm. \ref{KZsolution}]
Note that $\partial_{t_r}f_{M,\mathrm{cl}}(\mathbf{t})=a_r^\Delta(\mathbf{t})f_{M,\mathrm{cl}}(\mathbf{t})$ if and only if
\[
\int_{\mathbf{x}_0+\mathbb{R}^M}w_{\mathrm{cl}}^{(M)}(\mathbf{x};\mathbf{t};\Delta)
\alpha_{\mathbf{m}}^{(r)}(\mathbf{x};\mathbf{t};\Delta)d^M\mathbf{x}=
\int_{\mathbf{x}_0+\mathbb{R}^M}w_{\mathrm{cl}}^{(M)}(\mathbf{x};\mathbf{t};\Delta)
\gamma_{\mathbf{m}}^{(r)}(\mathbf{x};\mathbf{t};\Delta)d^M\mathbf{x}
\]
for all $\mathbf{m}\in\mathcal{P}_N^M$. It  thus suffices to show that
\[
\alpha_{\mathbf{m}}^{(r)}(\mathbf{x};\mathbf{t};\Delta)=\gamma_{\mathbf{m}}^{(r)}(\mathbf{x};\mathbf{t};\Delta)
\]
for all $\mathbf{m}\in\mathcal{P}_N^M$. This is trivial if $r\not\in\mathbf{m}$.

For the remainder of the proof fix $\mathbf{m}\in\mathcal{P}_N^M(r)$ and write $h=h(\mathbf{m},r)$.
Then Lemma \ref{kpartialtrf} and Lemma \ref{abeta} yield
\begin{equation*}
\begin{split}
W_{\mathbf{m}}^{(r)}(\mathbf{x};&\mathbf{t}):=
\frac{\alpha_{\mathbf{m}}^{(r)}(\mathbf{x};\mathbf{t};\Delta)-\gamma_{\mathbf{m}}^{(r)}(\mathbf{x};\mathbf{t};\Delta)}
{\prod_{i=1}^Mb(t_{m_i})b^\prime(x_i)}\\
&=\sum_{w\in S_M}\Biggl(\prod_{i=1}^M\frac{1}{b(x_{w(i)}\pm t_{m_i})}\Biggr)\left\{
1+2\sum_{i\not=h}\frac{b(x_{w(h)})^2}{b(x_{w(h)}\pm x_{w(i)})}\right.\\
&\left.\quad\qquad\qquad\qquad\qquad\qquad\qquad+\sum_{i=1}^M\frac{b(t_r)^2}{b(x_{w(i)}\pm t_r)}-
\sum_{i=1}^M\frac{b(x_{w(h)})^2}{b(x_{w(h)}\pm t_{m_i})}\right\}.
\end{split}
\end{equation*}
By \eqref{fundamentalb} we have $b(y)^2-b(x)^2=b(y\pm x)$, which can be used to write
\begin{equation*}
\begin{split}
W_{\mathbf{m}}^{(r)}(\mathbf x;\mathbf t) =
\sum_{w \in S_M} \Biggl( \prod_{j=1}^M  \frac{1}{b(x_{w(j)}\pm t_{m_j})}\Biggr)
&\sum_{i\not=h}\left\{\frac{b(t_r)^2}{b(x_{w(i)}\pm t_r)}\right.\\
&\left.\quad -\frac{b(x_{w(h)})^2}{b(x_{w(h)}\pm t_{m_i})}
+2\frac{b(x_{w(h)})^2}{b(x_{w(h)}\pm x_{w(i)})}\right\}.
\end{split}
\end{equation*}
Similarly we have
\begin{align*}
\frac{b(x_j)^2}{b(x_j \pm x_i)}  + \frac{b(t)^2}{b(x_i\pm t)} &= \frac{b(x_i)^2b(x_j\pm t )}{b(x_j \pm x_i)b(x_i\pm t)},\\
\frac{b(x_j)^2}{b(x_j \pm x_i)} - \frac{b(x_j)^2}{b(x_j \pm t)} &= \frac{b(x_j)^2b(x_i\pm t)}{b(x_j \pm x_i)b(x_j\pm t )},
\end{align*}
which leads to
\begin{equation*}
\begin{split}
W_{\mathbf{m}}^{(r)}(\mathbf{x};\mathbf{t})&=\sum_{i\not=h}\sum_{w\in S_M}
\Biggl(\prod_{\stackrel{j=1}{j\not=h,i}}^M\frac{1}{b(x_{w(j)}\pm t_{m_j})}\Biggr)
\frac{1}{b(x_{w(h)}\pm x_{w(i)})}\\
&\qquad\quad\times\left\{\frac{b(x_{w(i)})^2}{b(x_{w(i)}\pm t_r)b(x_{w(i)}\pm t_{m_i})}+
\frac{b(x_{w(h)})^2}{b(x_{w(h)}\pm t_r)b(x_{w(h)}\pm t_{m_i})}\right\}.
\end{split}
\end{equation*}
For fixed $i\not=h$, the $w$-term of the inner sum is cancelled by the $w\sigma_i$-term, where $\sigma_i\in S_M$ is the transposition $i\leftrightarrow h$.
Hence $W_{\mathbf{m}}^{(r)}(\mathbf{x};\mathbf{t})=0$,
as desired.
\end{proof}


\end{document}